\documentclass[preprint,10pt]{elsarticle}

\usepackage{arydshln}
\usepackage{mathrsfs}
\usepackage{amsmath}
\usepackage{amsfonts}
\usepackage{amssymb}
\usepackage[all,cmtip]{xy}
\usepackage[colorlinks]{hyperref}
\usepackage{xcolor}
\usepackage{cleveref}
\usepackage{ulem}
\usepackage[abs]{overpic}
\usepackage{float}
\usepackage{enumitem}
\usepackage{tikz}
\usetikzlibrary{decorations.pathmorphing}

\usepackage{etoolbox}
\patchcmd{\thebibliography}
  {\settowidth}
  {\setlength{\itemsep}{0pt plus 0.1pt}\settowidth}
  {}{}
\apptocmd{\thebibliography}
  {\small}
  {}{}

\definecolor{DarkRed}{RGB}{200,20,20}
\definecolor{SkyBlue}{rgb}{0.16, 0.32, 0.75}

\usepackage[all,cmtip]{xy}
\usepackage{tikz-cd}
\usepackage[toc,page]{appendix}
\usepackage{graphicx}
\usepackage{subcaption}
\usepackage[utf8]{inputenc}
\usepackage[english]{babel}
\usepackage{dsfont}
\usepackage{xcolor}
\usepackage{arydshln}
\usepackage{faktor}
\usepackage{mathtools}
\usepackage{scalerel}
\usepackage{booktabs}

\def\endpf{\hbox{\vrule height1.5ex width.5em}}

\newcommand\xdashmapsto[2][]{\mathrel{\mapstochar\xdashrightarrow[#1]{#2}}}

\makeatletter
\newcommand*{\da@rightarrow}{\mathchar"0\hexnumber@\symAMSa 4B }
\newcommand*{\da@leftarrow}{\mathchar"0\hexnumber@\symAMSa 4C }
\newcommand*{\xdashrightarrow}[2][]{%
	\mathrel{%
		\mathpalette{\da@xarrow{#1}{#2}{}\da@rightarrow{\,}{}}{}%
	}%
}
\newcommand{\xdashleftarrow}[2][]{%
	\mathrel{%
		\mathpalette{\da@xarrow{#1}{#2}\da@leftarrow{}{}{\,}}{}%
	}%
}
\newcommand{\xdashdownarrow}[2][]{%
	\mathrel{%
		\mathpalette{\da@xarrow{#1}{#2}\da@downarrow{}{}{\,}}{}%
	}%
}
\newcommand*{\da@xarrow}[7]{%
	\sbox0{$\ifx#7\scriptstyle\scriptscriptstyle\else\scriptstyle\fi#5#1#6\m@th$}%
	\sbox2{$\ifx#7\scriptstyle\scriptscriptstyle\else\scriptstyle\fi#5#2#6\m@th$}%
	\sbox4{$#7\dabar@\m@th$}%
	\dimen@=\wd0 %
	\ifdim\wd2 >\dimen@
	\dimen@=\wd2 %   
	\fi
	\count@=2 %
	\def\da@bars{\dabar@\dabar@}%
	\@whiledim\count@\wd4<\dimen@\do{%
		\advance\count@\@ne
		\expandafter\def\expandafter\da@bars\expandafter{%
			\da@bars
			\dabar@ 
		}%
	}%  
	\mathrel{#3}%
	\mathrel{%   
		\mathop{\da@bars}\limits
		\ifx\\#1\\%
		\else
		_{\copy0}%
		\fi
		\ifx\\#2\\%
		\else
		^{\copy2}%
		\fi
	}%   
	\mathrel{#4}%
}
\makeatother

\usepackage{graphicx}

\newcommand\tikzmark[1]{%
  \tikz[remember picture,overlay]\coordinate (#1);}

\newcommand{\underbracedmatrixll}[2]{%
  \left(\;\hspace{-.27in}
  \smash[b]{\underbrace{
    \begin{matrix}#1\end{matrix}
  }_{#2}}
  \;\right.
  \vphantom{\underbrace{\begin{matrix}#1\end{matrix}}_{#2}}
}

\newcommand{\underbracedmatrixrr}[2]{%
  \left. \;
  \smash[b]{\underbrace{
    \begin{matrix}#1\end{matrix}
  }_{#2}}
  \;\hspace{-.32in}\right)
  \vphantom{\underbrace{\begin{matrix}#1\end{matrix}}_{#2}}
}

\newtheorem{theorem}{Theorem}[section]
\newtheorem{lemma}[theorem]{Lemma}

\newtheorem{proposition}[theorem]{Proposition}

\newtheorem{example}[theorem]{Example}
\newtheorem{remark}[theorem]{Remark}

\begin{document}

\begin{frontmatter}
\title{Canonical blow-ups of Lagrangian and Orthogonal Grassmannians}   
%}

\author[aff1]{Hanlong Fang} % Keep the funding comment inside \thanks if needed

\ead{hlfang@pku.edu.cn}

\author[aff2]{Alex Massarenti}
\ead{msslxa@unife.it}

\author[aff1]{Xian Wu}
\ead{xianwu.ag@gmail.com}

% Correct: Use \address instead of \affiliation
\address[aff1]{School of Mathematical Sciences, Peking University, Beijing 100871, China}
\address[aff2]{Dipartimento di Matematica e Informatica, Universit\` a di Ferrara, Via Machiavelli 30, 44121 Ferrara, Italy}

% Corresponding author\cortext[cor1]{Corresponding author}

% Emails with explicit linking

\begin{abstract}
Let $\mathbf{LG}(V\oplus V^*)$ and $\mathbf{OG}^+(V\oplus V^*)$ denote the Lagrangian and orthogonal Grassmannians endowed with the natural $\mathbb{G}_m$-actions, respectively. Thaddeus proved that over $\mathbb{C}$, the Hilbert quotients $\mathbf{LG}(V\oplus V^*)\!/\!/\mathbb{G}_m$ and $\mathbf{OG}^+(V\oplus V^*)\!/\!/\mathbb{G}_m$ are isomorphic to the wonderful compactifications of the spaces of symmetric and skew‑symmetric matrices of maximal ranks, that is, the spaces of complete quadrics and complete skew-forms, respectively. In this paper, we construct the universal families of these Hilbert quotients by explicitly blowing up the corresponding isotropic Grassmannians, resulting in smooth toroidal compactifications of the spaces of symmetric and skew‑symmetric matrices of maximal ranks (before projectivization), which have simple normal crossing boundary divisors and include the spaces of the complete bilinear forms among these divisors. Specifically, we prove that over any algebraically closed field, the universal families of these Hilbert quotients are smooth, and the Hilbert quotients themselves are isomorphic to the spaces of the complete bilinear forms. Over an algebraically closed field of characteristic zero, we prove that the universal families are weak Fano varieties with vanishing higher cohomology groups for their tangent bundles, and are therefore locally rigid. Furthermore, we show that these universal families naturally resolve the Landsberg-Manivel rational maps from projective spaces to isotropic Grassmannians. 
\end{abstract}

\begin{keyword}
Wonderful compactifications \sep
Hilbert quotients \sep
Landsberg-Manivel maps  \sep 
Spherical varieties \sep
blow-up

%% PACS codes here, in the form: \PACS code \sep code

%% MSC codes here, in the form: \MSC code \sep code
%% or \MSC[2008] code \sep code (2000 is the default)

\end{keyword}

\end{frontmatter}

% manifold}
%by a  submanifold of real codimension two}

%\medskip

%\centerline{ Dedicated to the memory of  Professor Qi-keng Lu }%
%\medskip
%\centerline{ Dedicated to the memory of Professor Qi-Keng Lu}

%\vspace{3cm} 

%\tableofcontents

\section{Introduction}
The study of the moduli spaces of complete bilinear forms originated from classical 19th-century works in enumerative and projective geometry \cite{Cha,Hir,Schu}, and later evolved towards a more geometric perspective in higher dimensions, establishing connections to fields such as combinatorics and algebraic statistics, with key contributions from \cite{Van,Se2,Ty,Va2,La1,DP,DP2,Th1,LLT,MMM}, among others. In particular, De Concini and Procesi developed the foundational theory of what later became known as wonderful compactifications, generalizing earlier constructions to all symmetric spaces of adjoint type. More recently, such spaces have been used in constructing other moduli spaces  \cite{Th1,LoH,CM,CCM,ORCW}. In particular, Thaddeus proved that over $\mathbb C$, the Hilbert quotients of the natural one-dimensional torus actions on Grassmannians, Lagrangian Grassmannians, and orthogonal Grassmannians are isomorphic to the spaces of complete collineations, complete quadrics, and complete skew-forms, respectively.  

The theory of wonderful compactifications constitutes a cornerstone in the study of homogeneous varieties with various applications in arithmetic geometry and representation theory \cite{Fa,Sp1,Lu2,DKKS}. Nevertheless, the current framework fails for  homogeneous spaces whose automorphism groups possess a non-trivial center.  The development of a more general framework is still an enticing open question, while a major step forward has been taken by Kausz \cite{Ka} in 2000, who constructed a novel modular compactification for the general linear group. A simple uniform picture was recently established in \cite{FW} to incorporate the Kausz compactifications and the spaces of complete collineations. In particular, it was shown that over any algebraically closed field, the Kausz-type compactifications are the universal families for the Hilbert quotients of the 
$\mathbb G_m$-action on Grassmannians, which aligns with Thaddeus's results.

Investigating birational maps from projective spaces to smooth prime Fano varieties has been significantly advanced by the study of special Cremona transformations with a smooth irreducible base scheme \cite{Zak,CK89,ESB89,HSS92,Ru09,AS16,FH}. Landsberg and Manivel \cite{LM} constructed natural birational maps from projective spaces onto minuscule varieties, for which the base scheme may be singular. Zak \cite{Zak}, and Fu and Hwang \cite{FH} factored the rational maps associated with complex quadrics into a blow-up and a blow-down. Ding \cite{Di} further generalized to any compact Hermitian symmetric spaces, based on the blow-up centers constructed via the Bia{\l}ynicki-Birula decomposition (first appeared in \cite{Fang}) and the theory of varieties of minimal rational tangents \cite{HM}. It was shown in \cite{FW} that the Landsberg--Manivel maps from projective spaces onto Grassmannians can naturally factor through the Kausz-type compactifications  over ${\rm Spec\,}\mathbb Z$.  

%$\phi:\mathbb P^r\dashrightarrow Y$

%A birational map $\phi: X \dashrightarrow Y$ is said to admit a weak factorization if it can be decomposed into a sequence of blow-ups and blow-downs with smooth centers, and admit a strong factorization if, furthermore, all blow-ups in the sequence occur before any blow-downs. While weak factorization has been proven for complete nonsingular varieties in characteristic zero in \cite{AKMW}, the question of whether a strong factorization always exists is an open problem. A celebrated theorem by Abramovich-Karu-Matsuki-W\l odarczyk \cite{AKMW}shows that a birational map between complete nonsingular algebraic varieties X1 and X2 over an algebraically closed field K of characteristic zero

The focus of this paper is to study the universal families of the two remaining types of Hilbert quotients that arise in Thaddeus's work.
We explicitly construct the universal families by generalizing the blow‑up technique of \cite{FW}. Using the concrete description, we are able to examine their differential and birational geometric properties. %These constructions also yield new examples of homogeneous spaces with a non‑trivial center in their automorphism groups that admit smooth toroidal compactifications analogous to Kausz’s, which are constructed from Grassmannians with one‑dimensional torus actions (see \cite{FZ23} for examples with multi‑dimensional torus actions).

%Similar ideas have been developed for Faltings-Lafforgue spaces \cite{FZ23} and KSBA moduli of marked cubic surfaces \cite{FSW}.
\smallskip

%Let $G(p,n)$, $0<p<n$, be the Grassmannian  consisting of $p$-planes in the  $n$-space. 

Let us be more precise. Let $V$ be a free $\mathbb{Z}$-module of rank $n$. The module $V \oplus V^*$ possesses a canonical symplectic form $\omega$ defined by $\omega((v_1, \alpha_1), (v_2, \alpha_2)):=\alpha_2(v_1) - \alpha_1(v_2)$. The {\it Lagrangian Grassmannian} $\mathbf{LG}(V \oplus V^*)$ is a smooth, projective scheme over $\mathbb{Z}$ which represents the functor assigning to each commutative ring $R$ the set 
\begin{equation*}
%\mathbf{LG}(V \oplus V^*)(R) = 
\left\{ L \subset (V \oplus V^*)\otimes_{\mathbb{Z}} R \left|\, \begin{array}{l} L \text{ is a} \text{ rank } n \text{ direct summand of } (V \oplus V^*)\otimes_{\mathbb{Z}} R, \\ \omega_R(u, w) = 0 \text{ for all } u, w \in L \end{array} \right. \right\}.
\end{equation*}
%where $\omega_R$ is the $R$-linear extension of $\omega$. 
%The Lagrangian Grassmannian is a closed subscheme of the Grassmannian scheme $\mathbf{Gr}(n,V\oplus V^*)$, which parameterizes all rank $n$ direct summands of $V\oplus V^*$.$\mathbf{Gr}(n,V\oplus V^*)$
The Pl\"ucker embedding of the Grassmannian induces an embedding
\begin{equation}\label{lgpll}
e_{{\bf LG}}:\mathbf{LG}(V\oplus V^*)\xhookrightarrow{\,\,\,\,\,\,\,\,\,\,\,} \mathbb P\big(\bigwedge\nolimits^n(V\oplus V^*)\big).
\end{equation}

The $\mathbb G_m$-action on $V\oplus V^*$ defined by \begin{equation}\label{ltl}
t\cdot(v,\alpha)\mapsto(v,t\alpha),\ \ 
\,\,(v,\alpha)\in V\oplus V^*, 
\end{equation} induces a $\mathbb G_m$-action on $\mathbf{LG}(V \oplus V^*)$. Define a subgroup ${\rm GL}(V^*)\hookrightarrow {\rm Sp}(V\oplus V^*,\omega)$ by
\begin{equation*}
\begin{split}
A&\longmapsto \left(\begin{matrix}
    A^{-T}&0\\0&A
\end{matrix}\right)
\end{split}.   
\end{equation*}
Noticing that ${\rm GL}(V^*)\times\mathbb G_m$ stabilizes each summand in the decomposition $\bigwedge\nolimits^{n}(V\oplus V^*)=\bigoplus\limits_{k=0}^n\bigwedge\nolimits^{k}V\otimes\bigwedge\nolimits^{n-k}V^*,$
we can define a ${\rm GL}(V^*)\times\mathbb G_m$-equivariant rational map
\begin{equation}\label{kspel}
\mathcal {KL}(V):\mathbf{LG}(V\oplus V^*)\xdashrightarrow{\,\,\,\,\,\,\,\,\,\,\,\,\,\,\,}\mathbb P\big(\bigwedge\nolimits^n(V\oplus V^*)\big)\times\prod\limits_{k=0}^n\mathbb P\big(\bigwedge\nolimits^{k}V\otimes\bigwedge\nolimits^{n-k}V^*\big).  
\end{equation}
Denote by $\mathcal {TL}(V)$ the closure of the image of $\mathcal {KL}(V)$, and $\mathcal {ML}(V)$ the image of $\mathcal {TL}(V)$ under the natural projection
\begin{equation}\label{nprojl}
\mathbb P\big(\bigwedge\nolimits^n(V\oplus V^*)\big)\times\prod\limits_{k=0}^n\mathbb P\big(\bigwedge\nolimits^{k}V\otimes\bigwedge\nolimits^{n-k}V^*\big)\longrightarrow \prod\limits_{k=0}^n\mathbb P\big(\bigwedge\nolimits^{k}V\otimes\bigwedge\nolimits^{n-k}V^*\big).
\end{equation}

%Next, we turn to the case of Orthogonal Grassmannians.
Similarly, the {\it orthogonal Grassmannian} $\mathbf{OG}(V \oplus V^*)$ is a smooth, projective scheme over $\mathbb{Z}$ which represents the functor assigning to each commutative ring $R$ the set 
\begin{equation*}
%\mathbf{LG}(V \oplus V^*)(R) = 
\left\{ L \subset (V \oplus V^*)\otimes_{\mathbb{Z}} R \left|\, \begin{array}{l} L \text{ is a} \text{ rank } n \text{ direct summand of } (V \oplus V^*)\otimes_{\mathbb{Z}} R, \\ Q_R|_L=0 \end{array} \right. \right\}.
\end{equation*}
%where $Q_R$ is the $R$-linear extension of $Q$.Similarly, the Orthogonal Grassmannian is a closed subscheme of $\mathbf{Gr}(n,V\oplus V^*)$. 
Here $V\cong\mathbb Z^n$ with $n\geq 2$, and $V \oplus V^*$ possesses a a canonical quadratic form $Q$ defined by $Q(v, \alpha): = \alpha(v)$. %for all $(v_1, \alpha_1),(v_2, \alpha_2)\in V \oplus V^*$. 
We denote by $\mathbf{OG}^+(V\oplus V^*)$ the connected component of $\mathbf{OG}(V\oplus V^*)$ that contains the maximal isotropic submodule $V \oplus \{0\}\subset V\oplus V^*$.

The first canonical embedding of $\mathbf{OG}^+(V\oplus V^*)$ can be given as follows (see \cite{Knus} for details). Let $C(V\oplus V^*, Q)$ be the Clifford algebra with a $\mathbb Z$-linear map 
$i:V\oplus V^*\rightarrow C(V\oplus V^*, Q)$ such that $[i(x)]^2 = Q(x)$. The spinor representation $\rho:C(V\oplus V^*,Q)\rightarrow {\rm End}_{\mathbb Z}(\bigwedge V^*)$  defined by $\rho(v,\alpha)=l_{\alpha}+\iota_{v}$ splits into half-spinor representations $\bigwedge^{\rm even}V^*$, $\bigwedge^{\rm odd}V^*$, where 
$l_{\alpha}$ is the left multiplication
and $i_{v}$ is the derivation.
A spinor $\phi\in \bigwedge^{\rm even}V^*$ is pure if its annihilator $L_{\phi}:=\{x\in V\oplus V^*:x\cdot\phi=0\}$ is a maximal isotropic submodule. Each $L\in\mathbf{OG}^+(V\oplus V^*)$ corresponds to a unique pure spinor line 
$[\phi_L]$. we have the spinor embedding
\begin{equation}\label{eog}
e_{{\bf OG^+}}:\mathbf{OG}^+(V \oplus V^*)\xhookrightarrow{\,\,\,\,\,\,\,\,\,\,\,}\mathbb{P}\big(\bigwedge\nolimits^{\rm even}V^*\big).
\end{equation}

Similarly, we can define a ${\rm GL}(V^*)\times\mathbb G_m$-equivariant rational map
\begin{equation}\label{kspeo}
\mathcal {KO}(V):\mathbf{OG}^+(V\oplus V^*)\xdashrightarrow{\,\,\,\,\,\,\,\,\,\,\,\,\,\,\,}\mathbb P\big(\bigoplus\limits_{k=0}^{[n/2]}\bigwedge\nolimits^{2k}V^*\big)\times\prod\limits_{k=0}^{[n/2]}\mathbb P\big(\bigwedge\nolimits^{2k}V^*\big). 
\end{equation}
Denote by $\mathcal {TO}(V)$ the closure of the image of $\mathcal {KO}(V)$, and $\mathcal {MO}(V)$ the image of $\mathcal {TO}(V)$ under the natural projection
\begin{equation}\label{nprojo}
\mathbb P\big(\bigoplus\limits_{k=0}^{[n/2]}\bigwedge\nolimits^{2k}V^*\big)\times\prod\limits_{k=0}^{[n/2]}\mathbb P\big(\bigwedge\nolimits^{2k}V^*\big)\longrightarrow \prod\limits_{k=0}^{[n/2]}\mathbb P\big(\bigwedge\nolimits^{2k}V^*\big).
\end{equation}
When there is no ambiguity, we may write $\mathbf{LG}(V \oplus V^*)$, $\mathcal {TL}(V)$, $\mathcal {ML}(V)$, $\mathbf{OG}^+(V \oplus V^*)$, $\mathcal {TO}(V)$, $\mathcal {MO}(V)$ as $\mathrm{LG}(n,2n)$, $\mathcal {TL}_{n}$, $\mathcal {ML}_{n}$, $\mathrm{OG}^+(n,2n)$, $\mathcal {TO}_{n}$, $\mathcal {MO}_{n}$, respectively.

We begin by proving that the above  constructions yield smooth compactifications of homogeneous spaces with favorable properties.
\begin{theorem}\label{gwondl}
The inverse of (\ref{kspel}) extends to a regular morphism \begin{equation}\label{rspnl}
RL_{n}:\mathcal {TL}(V)\xrightarrow{\,\,\,\,\,\,\,\,\,\,\,\,} \mathbf{LG}(V \oplus V^*).    
\end{equation} 
$\mathcal {TL}(V)$ is smooth and projective over ${\rm Spec}\,\mathbb Z$ with a ${\rm GL}(V^*)\times\mathbb G_m$-action. There are $2n$ ${\rm GL}(V^*)\times\mathbb G_m$-stable smooth prime divisors with simple normal crossingss $D^+_1, 
\cdots,D^+_n,D^-_1$, $\cdots,D^-_n$ such that the following holds.
\begin{enumerate}[label={\rm(\Alph*)}]
\item $D^+_1\cong D^-_1\cong\mathcal {ML}(V)$ are smooth and projective over ${\rm Spec}\,\mathbb Z$. 

\item There is a ${\rm GL}(V^*)\times\mathbb G_m$-equivariant flat retraction 
\begin{equation*}
\mathcal {PL}_{n}:\mathcal {TL}(V)\xrightarrow{\,\,\,\,\,\,\,\,\,\,\,\,}D_1^-\cong\mathcal {ML}(V)  %\ \ ({resp.}\,  \mathcal P_{s,p,n}:\mathcal T_{s,p,n}\longrightarrow D_1^-) 
\end{equation*} such that the restriction  $\mathcal {PL}_{n}|_{D^+_1}:D_1^+\rightarrow D_1^-$ %(resp. $\mathcal P_{s,p,n}|_{D^+_1}$) 
is an isomorphism and that for $2\leq i\leq n$,
\begin{equation*}
\mathcal {PL}_{n}(D^-_i)=\mathcal {PL}_{n}(D^+_{n+2-i})=:\check D_i.\,\,    
\end{equation*}

\item After base change to an algebraically closed field of characteristic not $2$, $\mathcal {TL}(V)$ is a toroidal ${\rm GL}(V^*)$-spherical variety. The complement of the open
${\rm GL}(V^*)$-orbit consists of $D^+_1, 
\cdots,D^+_n,D^-_1$, $\cdots,D^-_n$. The closures of the ${\rm GL}(V^*)$-orbits are one-to-one given by
\begin{equation}\label{inrulel}
\bigcap\nolimits_{\,\,i\in I^+}D^+_i\mathbin{\scaleobj{1.1}{\bigcap}} \bigcap\nolimits_{\,\,i\in I^-}D^-_i%\mathbin{\scaleobj{2.1}{\backslash}} (\bigcup\nolimits_{j\notin I^-}D^-_j\mathbin{\scaleobj{1}{\bigcup}}\,\bigcup\nolimits_{j\notin I^+}D^+_j),
\end{equation} 
for subsets $I^+,I^-\subset\{1,2,\cdots,n\}$
such that $\min(I^+)+\min(I^-)\geq n+2$ with the convention $\min(\emptyset)=+\infty$. $D_1^-$ is wonderful with the ${\rm GL}(V^*)$-stable divisors $\check D_i$,  $2\leq i\leq n$.
\end{enumerate}
\end{theorem}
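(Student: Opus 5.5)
Our plan is to imitate the blow-up construction of \cite{FW} for Grassmannians, but carried out inside $\mathbf{LG}(V\oplus V^*)$. First we will identify the $\mathbb G_m$-fixed loci. Write $L_0=V\oplus 0$ and $L_\infty=0\oplus V^*$. For $0\le k\le n$, let $Z_k^+\subset\mathbf{LG}$ be the locus of $L$ with $\dim(L\cap V)\ge k$, and let $Z_k^-$ be the locus with $\dim(L\cap V^*)\ge k$. These are the Schubert-type closures of the Bia{\l}ynicki-Birula cells attached to the fixed components $\mathbf{Gr}(k,V)$; in Plücker coordinates they are cut out by the vanishing of the components in $\bigwedge^{j}V\otimes\bigwedge^{n-j}V^*$ with $j<k$, and similarly for $Z^-_k$. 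Note that $Z_n^+=\{L_0\}$ and $Z_n^-=\{L_\infty\}$.

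We will then define $\widetilde{\mathcal{TL}}$ as an iterated blow-up in the order $Z_n^{\pm}$, then the strict transforms of $Z_{n-1}^{\pm}$, and so on down to $Z_2^\pm$. At each stage the plus and minus centers are disjoint, so the order between them is irrelevant. Blowing up $Z_1^\pm$ is unnecessary, since these are already divisors: $D_1^\pm$ will be the strict transforms of $Z_1^\pm$, and $D_i^\pm$ for $i\ge 2$ the exceptional divisors over $Z_{n+2-i}^\mp$ (or with the opposite indexing, chosen to match (B)).

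Smoothness over $\mathrm{Spec}\,\mathbb Z$ and the simple normal crossing property will be checked in the standard affine charts of $\mathbf{LG}$. Such a chart is the space of symmetric $n\times n$ matrices $S$, with $L=\mathrm{graph}(S)$, and its variants obtained by swapping coordinates. In this chart $Z_k^-$ becomes the locus $\{\mathrm{rank}\,S\le n-k\}$. The iterated blow-up of the determinantal loci of symmetric matrices is exactly Vainsencher's construction of complete quadrics, and it is smooth with SNC exceptional divisors. We will verify this over $\mathbb Z$ with explicit coordinates, reducing $S$ by the unipotent row operations available in each chart, which are defined over $\mathbb Z$ without division by $2$.

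Next we will identify $\widetilde{\mathcal{TL}}$ with $\mathcal{TL}(V)$. The plan has three steps:
\begin{enumerate}
\item Show that each projection $\mathbf{LG}\dashrightarrow\mathbb P(\bigwedge^kV\otimes\bigwedge^{n-k}V^*)$ becomes regular on $\widetilde{\mathcal{TL}}$. Its base locus is $Z^+_{k+1}\cup Z^-_{n-k+1}$, and the pulled-back ideals become invertible after the blow-ups, which we check in the charts as for complete quadrics.
\item Deduce a birational morphism $\widetilde{\mathcal{TL}}\to\mathcal{TL}(V)$.
\item Prove that this morphism is a closed embedding, by checking that its differential is injective and that it separates points in the charts; this uses the fact that the coordinates of the blow-ups are ratios of the Plücker components.
\end{enumerate}
Once the identification is made, $RL_n$ is simply the blow-down map.

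For (A) and (B), we will use the $\mathbb G_m$-action. The limit map $x\mapsto\lim_{t\to 0}t\cdot x$ on $\mathcal{TL}(V)$ lands in $D_1^-$, and we will show it defines a flat retraction $\mathcal{PL}_n$. Flatness follows because the map is locally a smooth $\mathbb P^1$-bundle-like contraction. Alternatively, by miracle flatness: the source is smooth, $D_1^-$ is smooth, and the fibres are equidimensional of dimension one, being the closures of $\mathbb G_m$-orbits.

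The image of $\mathcal{TL}(V)$ in $\prod_k\mathbb P(\bigwedge^kV\otimes\bigwedge^{n-k}V^*)$ forgets only the $\mathbb G_m$-orbit parameter. This gives $D_1^\pm\cong\mathcal{ML}(V)$, and the composite $D_1^+\to D_1^-$ is an isomorphism. The incidence $\mathcal{PL}_n(D_i^-)=\mathcal{PL}_n(D_{n+2-i}^+)$ holds because both divisors parametrize degenerations in which the rank jumps at the same step, computed on generic points of the charts.

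For (C), over an algebraically closed field of characteristic $\neq 2$, the open $\mathrm{GL}(V^*)$-orbit is the set of nondegenerate symmetric forms times $\mathbb G_m$, a spherical homogeneous space. The boundary is the union of the $D_i^\pm$ with SNC, so $\mathcal{TL}(V)$ is toroidal. The orbit closures are the intersections in \eqref{inrulel}, and the nonemptiness condition $\min I^++\min I^-\ge n+2$ comes from the incompatibility $\dim(L\cap V)+\dim(L\cap V^*)\le n$, tracked through the indexing. Finally, $D_1^-$ is the space of complete quadrics, which is wonderful by De Concini--Procesi and Vainsencher.

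We expect the main obstacle to be the chart computation: showing that after the iterated blow-up all Plücker component maps become regular, and that the resulting morphism is an embedding over $\mathbb Z$, including in characteristic $2$, where symmetric matrices misbehave. Our first attempt will be an inductive Schur-complement normal form $S=\begin{pmatrix}a&b\\ b^T&D\end{pmatrix}$, with $D\mapsto D-b^Ta^{-1}b$, reducing the computation for $n$ to that for $n-1$ exactly as in \cite{FW}.
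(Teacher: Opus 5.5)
Your overall strategy matches the paper's: blow up the closures of the Bia{\l}ynicki-Birula cells, compute in explicit charts, and use the $\mathbb G_m$-invariant projection $\mathfrak P$. But the proposal misidentifies the divisors that (A) and (B) concern.

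$D_1^-$ and $D_1^+$ are the exceptional divisors $\mathcal D_{(n,0)}=(RL_n)^{-1}(L_0)$ and $\mathcal D_{(0,n)}=(RL_n)^{-1}(L_\infty)$ over the two isolated fixed points, i.e.\ over your $Z_n^+$ and $Z_n^-$. In general $D_k^-$ lies over $Z^+_{n+1-k}$ and $D_k^+$ over $Z^-_{n+1-k}$, so the strict transforms of $Z_1^\pm$ are $D_n^\mp$, not $D_1^\pm$.

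With your labeling, (A) already fails for $n=2$. There $\mathcal{TL}_2=\mathrm{Bl}_{L_0,L_\infty}\mathrm{LG}(2,4)$, and the strict transform of $Z_1^+$ (the tangent hyperplane section of the quadric threefold at $L_0$, a quadric cone) is $\mathbb F_2$. But $\mathcal{ML}_2\cong\mathbb P(\mathrm{Sym}^2V^*)\cong\mathbb P^2$. Your labeling also contradicts your own arguments. The $t\to 0$ limits of general points lie over $L_0\notin Z_1^-$. And the rule $\min I^++\min I^-\ge n+2$ follows from $\dim(L\cap V)+\dim(L\cap V^*)\le n$ only with the corrected indexing.

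Even after relabeling, $x\mapsto\lim_{t\to0}t\cdot x$ is not a retraction onto $D_1^-$. It is the identity on $D_1^+$. In $\mathcal A_l$ the action only rescales $b_{l+1}\mapsto tb_{l+1}$ and $a_{n-l+1}\mapsto t^{-1}a_{n-l+1}$. So a general point of $D_k^+$, $2\le k\le n$ (which is $\{a_k=0\}$ in $\mathcal A_{n+1-k}$), flows to the codimension-two fixed component $\mathcal D_{(k-1,n-k+1)}$, not to $D_1^-$.

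The retraction has to be $\mathcal{PL}_n=\check{\mathfrak P}^{-1}\circ\mathfrak P$. This requires Lemma \ref{embl}: $\mathfrak P|_{D_1^-}$ is a closed embedding and $\mathfrak P(\mathcal{TL}(V))=\mathfrak P(D_1^-)$. Its fibres are then chains of orbit closures, and you must show they are all one-dimensional before invoking miracle flatness.

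The chart verification over $\mathbb Z$ has a hole exactly where you expected one. A Schur complement with a $1\times 1$ unit pivot (reducing $n$ to $n-1$) never reaches points of the characteristic-$2$ fibres where the relevant symmetric matrix has zero diagonal. An example is a point of the exceptional divisor $\mathbb P(\mathrm{Sym}^2V^*)$ over $L_0$ whose direction $[v_{ij}]$ has all $v_{ii}=0$ but $v_{12}\neq 0$. There $(gvg^T)_{ii}=\sum_j g_{ij}^2v_{jj}=0$ for every $g$, so no change of basis creates a unit pivot. The missing ingredient is a $2\times 2$ pivot block $\begin{pmatrix}y_k&1\\1&y_{k+1}\end{pmatrix}$ with $1-y_ky_{k+1}$ invertible. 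These are the paper's Type-II Mille Cr\^epes charts, which together with the Type-I charts cover $(RL_n)^{-1}(U_{12\cdots l})$ up to permutations of the basis.

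A smaller slip: $Z_k^+$ and $Z_k^-$ are not disjoint when $2k\le n$. For $n=4$, the point $L_{12}$ lies on $Z_2^+\cap Z_2^-$ and on no earlier center. So disjointness does not justify your interleaved order. What does justify it is Lemma \ref{sepal}: in $U_{12\cdots l}$ the plus loci are rank loci of the block $W$ and the minus loci are rank loci of the block $Z$, in independent variables. Alternatively, group the disjoint pair $Z^+_{n-k+1}$ and $Z^-_{k+1}$ as in $S_k$ and use Lemma \ref{cpcl}. The graph closure is the blow-up of $\prod_k\mathcal S_k$, obtained in any order, which gives the identification with $\mathcal{TL}(V)$ directly.
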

%Notice that in Property (C), one cannot drop the condition that $\mathbb K$ is algebraically closed or of characteristic not $2$, because the normal forms of symmetric matrices under congruence are not determined solely by rank in these cases.

\begin{theorem}\label{gwondo}
The inverse of (\ref{kspeo}) extends to a regular morphism \begin{equation}\label{rspno}
RO_{n}:\mathcal {TO}(V)\xrightarrow{\,\,\,\,\,\,\,\,\,\,\,\,} \mathbf{OG}^+(V \oplus V^*).    
\end{equation} 
$\mathcal {TO}(V)$ is smooth and projective over ${\rm Spec}\,\mathbb Z$ with a ${\rm GL}(V^*)\times\mathbb G_m$-action. There are $2[\frac{n}{2}]$ ${\rm GL}(V^*)\times\mathbb G_m$-stable smooth prime divisors with simple normal crossingss $D^+_1, 
\cdots,D^+_{[\frac{n}{2}]},D^-_1$, $\cdots,D^-_{[\frac{n}{2}]}$ such that the following holds.
\begin{enumerate}[label={\rm(\Alph*)}]
\item $D^+_1\cong D^-_1\cong\mathcal {MO}(V)$ are smooth and projective over ${\rm Spec}\,\mathbb Z$. 

\item There is a ${\rm GL}(V^*)\times\mathbb G_m$-equivariant flat retraction 
\begin{equation*}
\mathcal {PO}_{n}:\mathcal {TO}(V)\xrightarrow{\,\,\,\,\,\,\,\,\,\,\,\,} D_1^-\cong\mathcal {MO}(V)  %\ \ ({resp.}\,  \mathcal P_{s,p,n}:\mathcal T_{s,p,n}\longrightarrow D_1^-) 
\end{equation*} such that the restriction  $\mathcal {PO}_{n}|_{D^+_1}:D_1^+\rightarrow D_1^-$ %(resp. $\mathcal P_{s,p,n}|_{D^+_1}$) 
is an isomorphism and that for $2\leq i\leq [\frac{n}{2}]$,
\begin{equation*}
\mathcal {PO}_{n}(D^-_i)=\mathcal {PO}_{n}(D^+_{[\frac{n}{2}]+2-i})=:\check D_i\,\,    
\end{equation*}

\item After base change to an algebraically closed field of characteristic not $2$, $\mathcal {TO}(V)$ is a toroidal ${\rm GL}(V^*)$-spherical variety. The complement of the open
${\rm GL}(V^*)$-orbit consists of $D^+_1, 
\cdots,D^+_{[\frac{n}{2}]},D^-_1$, $\cdots,D^-_{[\frac{n}{2}]}$. The closures of ${\rm GL}(V^*)$-orbits are one-to-one given by
\begin{equation*}%\label{inruleo}
\bigcap\nolimits_{\,\,i\in I^+}D^+_i\mathbin{\scaleobj{1.1}{\bigcap}} \bigcap\nolimits_{\,\,i\in I^-}D^-_i%\mathbin{\scaleobj{2.1}{\backslash}} (\bigcup\nolimits_{j\notin I^-}D^-_j\mathbin{\scaleobj{1}{\bigcup}}\,\bigcup\nolimits_{j\notin I^+}D^+_j),
\end{equation*} 
for subsets $I^+,I^-\subset\{1,2,\cdots,[\frac{n}{2}]\}$
such that $\min(I^+)+\min(I^-)\geq [\frac{n}{2}]+2$ with the convention $\min(\emptyset)=+\infty$. $D_1^-$ is wonderful with the ${\rm GL}(V^*)$-stable divisors $\check D_i$,  $2\leq i\leq [\frac{n}{2}]$.
\end{enumerate}
\end{theorem}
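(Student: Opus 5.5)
We follow the blow-up strategy of \cite{FW}, transporting it from Grassmannians to the spinor variety. The aim is to realize $\mathcal{TO}(V)$ as an explicit iterated blow-up of $\mathbf{OG}^+(V\oplus V^*)$; the morphism $RO_n$ is then the composite of the blow-down maps.

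\textbf{Step 1: the big cell.} On the open cell $\{L=\text{graph of }X: V\to V^*\}$ with $X$ skew-symmetric, the pure spinor is $\phi_L=\exp(X)=\sum_k X^{\wedge k}/k!$. Over $\mathbb Z$ we use the divided powers (Pfaffians) $X^{[k]}\in\bigwedge^{2k}V^*$ instead. The map $\mathcal{KO}(V)$ sends $L$ to $\big([\phi_L],([X^{[k]}])_{k}\big)$. For $0\le k\le[\frac n2]$, let $Z_k^\pm$ be the $\mathbb G_m$-stable loci where the weight components of $\phi$ of weight $<k$ (respectively $>[\frac n2]-k$) vanish. These are the Bia{\l}ynicki-Birula type strata given by rank conditions on $L\cap V$ and $L\cap V^*$, and their lowest members are the source and sink $\mathbb P(\bigwedge^0V^*)$ and $\mathbb P(\bigwedge^{2[n/2]}V^*)$ in the even case (an analogous pair of sub-Grassmannians in the odd case).

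\textbf{Step 2: the blow-up.} Define $\widetilde{\mathcal{TO}}$ by blowing up, in increasing dimension, the strict transforms of the $Z_k^+$ and of the $Z_k^-$ alternately, as in \cite{FW}. We then prove three things by explicit charts.
\begin{enumerate}
\item Each center is smooth at each stage and the centers meet transversally, so the result is smooth over $\mathbb Z$ and the exceptional divisors $D_i^\pm$ form an snc divisor.
\item Each projection $\mathbf{OG}^+\dashrightarrow\mathbb P(\bigwedge^{2k}V^*)$ is resolved on $\widetilde{\mathcal{TO}}$. Its base ideal becomes invertible because, in local coordinates adapted to the $\mathbb G_m$-weights, it is the product of powers of the exceptional ideals.
\item The resulting morphism $\widetilde{\mathcal{TO}}\to\mathcal{TO}(V)$ is an isomorphism. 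We show it is a closed immersion chart by chart, by exhibiting local coordinates among the Plücker and Pfaffian coordinates.
\end{enumerate}
Smoothness, projectivity and the extension $RO_n$ follow immediately.

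We use a standard reduction for the charts. Translating by the $\mathrm{GL}(V^*)$-action and using the open cover of $\mathbf{OG}^+$ by the translates of the big cell under $\mathrm{Spin}$, we need only check charts around $\mathbb G_m$-fixed points $V_I\oplus V_{I^c}^*$. Around such a point, a twisted graph coordinate $X$ reduces the problem to a local model. The model is the resolution of the ``Pfaffian filtration'' of skew matrices, namely the complete skew-forms construction, run with an extra scaling parameter $t$ on each side. This is where the wonderful compactification of skew forms enters: $D_1^-$ is identified with the closure of $([X^{[k]}])_k$ in $\prod_k\mathbb P(\bigwedge^{2k}V^*)$, which is exactly $\mathcal{MO}(V)$.

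\textbf{Step 3: properties (A), (B), (C).} For (A), $D_1^\pm$ is the $\mathbb G_m$-fixed (source/sink) component of the blow-up. The retraction $\mathcal{PO}_n$ is the projection to $\prod_k\mathbb P(\bigwedge^{2k}V^*)$, i.e.\ $\mathcal{TO}\to\mathcal{MO}$. We check that it is a $\mathbb P^1$-bundle-like family: every $\mathbb G_m$-orbit closure is a chain of $\mathbb P^1$'s, and the map is flat since it is smooth in the charts. Its restriction to $D_1^+$ is an isomorphism by the symmetry exchanging $V$ and $V^*$, which reverses the grading. The identifications $\mathcal{PO}_n(D_i^-)=\mathcal{PO}_n(D^+_{[n/2]+2-i})=\check D_i$ record that a general point of $\check D_i$ (a complete skew-form with a rank jump at step $i$) has a broken orbit, whose nodes lie on the exceptional divisors indexed complementarily.

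For (C), in characteristic not $2$ skew forms are classified by rank. Hence the $\mathrm{GL}(V^*)$-orbits are indexed by the data of the flags in the charts, and the condition $\min I^++\min I^-\ge[\frac n2]+2$ is exactly non-emptiness of the intersections in the local model. Sphericality follows from that of $\mathrm{GL}_n/\mathrm{Sp}_n\times\mathbb G_m$. Toroidality follows because the boundary is snc and the colors do not contain orbits; the latter is checked on the local slice, where the model is toric.

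\textbf{Main obstacle.} The main obstacle is Step 2(c): proving that the closure $\mathcal{TO}(V)$ equals the iterated blow-up scheme-theoretically over $\mathbb Z$, including in characteristic $2$. We would first compute the base ideals in the local Pfaffian coordinates and show that they factor as monomials in the exceptional equations. Failing that, we would adapt the Kausz-type inductive argument of \cite{FW}, reducing from $n$ to $n-2$ by restricting to $D_1^\pm$.
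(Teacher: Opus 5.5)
Your architecture matches the paper's. Your $Z_k^\pm$ are the Bia{\l}ynicki-Birula cell closures whose disjoint unions are the loci $\underline S_k$. Your local model over $U'_{12\cdots l}$ (the $X$-coordinates times complete skew-forms with a scaling parameter on the $Z$- and $W$-blocks) is what the Mille Cr\^epes charts $\underline\Gamma_l$ parametrize, by Lemma \ref{sepao}. The retraction is $\mathcal{PO}_n=\check{\mathfrak P}^{-1}\circ\mathfrak P$.

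Your ``main obstacle'' is self-inflicted. The graph closure of $\mathcal{KO}$ is tautologically the blow-up of $\prod_k\underline{\mathcal S}_k$, i.e.\ the iterated blow-up of total transforms in any order (the analogue of Lemma \ref{cpcl}). So the paper needs no comparison with a strict-transform tower, and all the work goes into smoothness via charts.

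Two steps of your Step 3 fail as written.

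\textbf{Flatness.} $\mathcal{PO}_n$ is not smooth. As you note, its fibres are chains of $\mathbb P^1$'s, and near a node the map is locally $(a,b,w)\mapsto(ab,w)$: in the charts, fibres are cut out by fixing the product of the first $a$- and $b$-coordinates and all other coordinates. Flatness must come from this local form ($\mathbb Z[a,b]$ is free over $\mathbb Z[ab]$), or from miracle flatness between regular schemes with equidimensional fibres.

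\textbf{The isomorphism $D_1^+\cong D_1^-$.} The $V\leftrightarrow V^*$ symmetry does not exist for $n$ odd. Then $V\oplus0$ and $0\oplus V^*$ lie in different components of $\mathbf{OG}(V\oplus V^*)$. The source of $\mathbf{OG}^+(n,2n)$ is a point, while the sink is $\mathbb P(\bigwedge^{n-1}V^*)\cong\mathbb P^{n-1}$, so no automorphism inverting the $\mathbb G_m$-action can exchange them. For $n=3$, $\mathcal{TO}_3$ is $\mathbb P^3$ blown up at a point: $D_1^-$ is the exceptional plane and $D_1^+$ is the strict transform of a hyperplane.

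A uniform replacement works for both parities. The projection $F^{[n/2]}\circ e_{\mathbf{OG}^+}$ is regular near the sink and is the identity on it. Hence on $D_1^+$ the first coordinate of $\mathcal{TO}(V)\subset\mathbb P^{\underline N_n}\times\prod_k\mathbb P(\bigwedge^{2k}V^*)$ equals the last-factor coordinate. So $D_1^+$ lies in the graph of the last projection, and $\mathfrak P|_{D_1^+}$ is a closed immersion. It is onto $\mathcal{MO}(V)=\mathfrak P(\mathcal{TO}(V))$: both are integral, and $\mathfrak P$ is constant on $\mathbb G_m$-orbits, so $\dim\mathcal{MO}(V)\le\dim D_1^+$. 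The same argument with the degree-$0$ factor gives $D_1^-\cong\mathcal{MO}(V)$, which is what Lemma \ref{embl} supplies via charts.

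\textbf{Sphericality.} The open orbit is $\mathrm{GL}_n/\mathrm{Sp}_n$ only for $n$ even. For $n$ odd it consists of skew forms of rank $n-1$. Sphericality there follows from $\bigwedge^2V^*$ being a multiplicity-free $\mathrm{GL}(V)$-module, or directly from the charts.
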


Recall from \cite{Ty,Va2,La1} that the space of complete quadrics is the closure of the graph of 
\begin{equation*}
\begin{split}
\mathbb P\big({\rm Sym}^2V^*\big)&\xdashrightarrow{\,\,\,\,\,\,\,\,\,\,\,}
\mathbb P\big({\rm Sym}^2\bigwedge\nolimits^{0}V^*\big)\times\mathbb P\big({\rm Sym}^2\bigwedge\nolimits^{1}V^*\big)\times\cdots\times\mathbb P\big({\rm Sym}^2\bigwedge\nolimits^{n}V^*\big),  
\end{split}    
\end{equation*} 
and the space of complete skew-forms is that of
\begin{equation*}
\begin{split}
\mathbb P\big(\bigwedge\nolimits^2V^*\big)&\xdashrightarrow{\,\,\,\,\,\,\,\,\,\,\,}
\mathbb P\big(\bigwedge\nolimits^2\bigwedge\nolimits^{0}V^*\big)\times\mathbb P\big(\bigwedge\nolimits^2\bigwedge\nolimits^{1}V^*\big)\times\cdots\times\mathbb P\big(\bigwedge\nolimits^2\bigwedge\nolimits^{[n/2]}V^*\big).   
\end{split}    
\end{equation*}
The spaces of complete bilinear forms are naturally realized as boundary divisors as follows.

\begin{proposition}\label{red2l} 
\begin{enumerate}[label={\rm(\Alph*)}]
    \item The space of complete quadrics in $\mathbb P(V)$ is isomorphic to $\mathcal {ML}(V)$.    

    \item The space of complete skew-forms on $V$ is isomorphic to $\mathcal {MO}(V)$.  
\end{enumerate}
\end{proposition}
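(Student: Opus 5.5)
The plan is to compute both rational maps explicitly on a dense open chart. On that chart the isotropic Grassmannian is an affine space of symmetric or alternating matrices, and the map visibly factors through the $\mathbb G_m$-quotient, i.e. through projectivizing that matrix. Both $\mathcal{ML}(V)$ and the space of complete quadrics (resp. $\mathcal{MO}(V)$ and complete skew-forms) will then be scheme-theoretic closures of the image of the same integral scheme inside identified ambient products. Hence they coincide.

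\emph{Step 1 (Lagrangian chart).} Let $U\subset\mathbf{LG}(V\oplus V^*)$ be the open subscheme of Lagrangians transverse to $0\oplus V^*$. Every $L\in U(R)$ is the graph $\{(v,Sv)\}$ of a unique $S\in{\rm Sym}^2V^*\otimes R$, so $U\cong\mathbb A({\rm Sym}^2V^*)$, which is dense because $\mathbf{LG}$ is integral and flat over $\mathbb Z$. I will expand the Pl\"ucker vector $(e_1+Se_1)\wedge\cdots\wedge(e_n+Se_n)$ and fix the canonical isomorphism $\bigwedge^kV\cong\bigwedge^{n-k}V^*\otimes\det V$. Under it, the component of $e_{\mathbf{LG}}(L)$ in $\bigwedge^kV\otimes\bigwedge^{n-k}V^*$ should be $\bigwedge^{n-k}S$, up to a universal sign and a twist by $\det V$. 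This lands in ${\rm Sym}^2\bigwedge^{n-k}V^*\subset\bigwedge^{n-k}V^*\otimes\bigwedge^{n-k}V^*$. The $\mathbb G_m$-action~(\ref{ltl}) sends $S\mapsto tS$ and rescales the $j$-th component by $t^j$, so $[\bigwedge^jS]$ is unchanged.

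\emph{Step 2 (Lagrangian case).} Restricted to the invertible symmetric $S$, the composite of $\mathcal{KL}(V)$ with (\ref{nprojl}) is
\[
S\longmapsto\big([\bigwedge\nolimits^0S],[\bigwedge\nolimits^1S],\dots,[\bigwedge\nolimits^nS]\big).
\]
Its $j=1$ coordinate is $[S]\in\mathbb P({\rm Sym}^2V^*)$, so its image is exactly the graph of the complete-quadrics map restricted to nondegenerate quadrics. Since $\mathcal{TL}(V)$ is the closure of an irreducible image, its projection $\mathcal{ML}(V)$ equals the closure of the image of any dense open. So $\mathcal{ML}(V)$ and the space of complete quadrics are closures of the same locally closed subscheme in $\prod_k\mathbb P(\bigwedge^kV\otimes\bigwedge^{n-k}V^*)$, where the closed subspaces $\mathbb P({\rm Sym}^2\bigwedge^jV^*)$ are identified as above. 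This proves (A).

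\emph{Step 3 (orthogonal case).} On the chart of $\mathbf{OG}^+$ transverse to $0\oplus V^*$, $L$ is the graph of an alternating $A\in\bigwedge^2V^*$. The pure spinor is $\phi_L=\sum_k A^{[k]}$ with divided powers $A^{[k]}\in\bigwedge^{2k}V^*$, which are defined over $\mathbb Z$. The $\mathbb G_m$-action again scales $A^{[k]}$ by $t^k$, so $\mathcal{MO}(V)$ is the closure of the image of $A\mapsto([A^{[k]}])_{k}$ over nondegenerate $A$. To compare with the definition of complete skew-forms through $\mathbb P(\bigwedge^2\bigwedge^kV^*)$, I would use the $\mathrm{GL}(V)$-equivariant closed embeddings relating $[A^{[k]}]$ to $[\bigwedge^kA]$. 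For $k$ even, $\bigwedge^{2k'}A$ is the image of $A^{[k']}\otimes A^{[k']}$ under a Veronese-type map. I would also need the standard Pfaffian identities expressing the odd-rank data $[\bigwedge^{2k'+1}A]$ in terms of $A^{[k']}$ and $A^{[k'+1]}$. These give a morphism between the two ambient products that is an isomorphism on the dense open image and has an inverse on the closures given by the projections to the $A^{[k]}$-factors.

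The main obstacle is Step 3: producing the mutually inverse morphisms between the two closures over $\mathbb Z$, not merely over a field of characteristic $0$, where the divided powers and the quadratic relations need care in characteristic $2$. My first attempt would be to check the identities on the open dense locus and then argue closedness via the equivariant projections. Failing that, I would reduce to geometric fibres and use flatness of both closures over $\mathbb Z$. The sign and twist bookkeeping in Step 1 is routine by comparison.
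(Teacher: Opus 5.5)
Your argument for (A) is correct, but it follows a different route from the paper's.

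\textbf{Part (A): comparison of routes.} The paper works on the boundary. Using the Mille Cr\^epes charts, it identifies the divisor $D_1^+\subset\mathcal{TL}(V)$ with the iterated blow-up $\widetilde{\mathbb P}(S_{n\times n})$ of Example~\ref{vcc}, and then passes to $\mathcal{ML}(V)$ via $D_1^+\cong D_1^-$ and Lemma~\ref{embl}.

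You work on the big cell instead. The $\bigwedge^kV\otimes\bigwedge^{n-k}V^*$-component of $(e_1+Se_1)\wedge\cdots\wedge(e_n+Se_n)$ is $\sum_{|J|=n-k}\epsilon_J\,e_{J^c}\otimes Se_J$. Under the contraction $\bigwedge^kV\cong\bigwedge^{n-k}V^*\otimes\det V$ the shuffle signs $\epsilon_J$ appear squared, so this component is exactly $\bigwedge^{n-k}S$. Combined with properness of the projection, and with the fact that the graph closure projects isomorphically onto $\prod_j$ (the $j=1$ factor repeats the base), the two spaces are literally the same reduced closed subscheme of one product.

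Your route is chart-free, works over $\mathrm{Spec}\,\mathbb Z$, and matches the graph-closure definition recalled in the introduction directly. The paper's proof lands on the blow-up model, so it tacitly uses the classical equivalence between the blow-up and graph-closure models. In exchange, the paper obtains the blow-up description of $D_1^{\pm}$, which it later uses for wonderfulness in Theorem~\ref{gwondl}(C).

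\textbf{Part (B): the gap.} As written, (B) is not proved, and the gap is the step you flag yourself. You read the printed factors $\mathbb P(\bigwedge^2\bigwedge^kV^*)$ literally, so you must compare $[A^{[k]}]$ with the compound matrices $[\bigwedge^kA]$. None of that comparison is established.
\begin{itemize}
\item The asserted ``morphism between the two ambient products'' is defined only through identities on the open cell. Whether it is regular along the boundary strata is exactly the point at issue.
\item The proposed inverse ``by projections to the $A^{[k]}$-factors'' presupposes coordinates that the compound model does not have for $k\ge2$.
\item The relations have non-unit coefficients over $\mathbb Z$. For instance, $2\operatorname{Pf}(A_{1234})=\det A_{\{1,3\},\{2,4\}}-\det A_{\{1,2\},\{3,4\}}-\det A_{\{1,4\},\{2,3\}}$, and $\operatorname{Pf}(A_{1234})$ is not in the $\mathbb Z$-span of the $2\times2$ minors. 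So Pfaffian data cannot be recovered linearly from compound data in characteristic $2$.
\end{itemize}
Neither fallback you mention (density plus equivariance, or flatness plus geometric fibres) addresses these points. Moreover, the literal reading cannot be the intended one: $\bigwedge^2\bigwedge^0V^*=0$, and for even $k$ the matrix $\bigwedge^kA$ is symmetric, not alternating.

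\textbf{How to close it.} Drop the detour. The only reading compatible with the $[n/2]+1$ factors, with the ambient of $\mathcal{MO}(V)$, and with the spinor coordinates of Lemma~\ref{pure} is the following: the space of complete skew-forms is the closure of the graph of $[A]\mapsto([A^{[k]}])_{0\le k\le[n/2]}$ in $\mathbb P(\bigwedge^2V^*)\times\prod_k\mathbb P(\bigwedge^{2k}V^*)$, with $[A^{[1]}]=[A]$.

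With that definition, the opening of your Step~3 already finishes (B) word for word as in Step~2. That opening consists of the graph chart $U'_0$, the formula $\phi_{L_A}=\sum_k\pm A^{[k]}$ (Lemma~\ref{pure} with $l=0$), and the $t^k$-scaling. One correction is needed there: for $n$ odd no alternating form is nondegenerate. The rational map is defined exactly on forms of rank $2[n/2]$, and that is the dense open set whose image you must close up.

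For comparison, the paper proves (B) in the same way as (A). It identifies $D_1^+\subset\mathcal{TO}(V)$, via charts, with the iterated blow-up of $\mathbb P(\bigwedge^2V^*)$ along the strict transforms of the rank loci. That route never has to fix a normalization of the graph-closure model.
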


For a projective variety $X$ with an algebraic group action $G$, there is an open set $U \subset X$ such that the orbit closures of points in $U$ form a flat family of subschemes of $X$. Hence, there is a morphism from $U$ to ${\rm Hilb}(X)$, the Hilbert scheme of $X$. Following \cite{BS,Kap}, the closure of the image of $U$ in ${\rm Hilb}(X)$ is called the {\it Hilbert quotient} and denoted by $X\! /\!/G$. %Over the field of complex numbers, Thaddeus \cite{Th1} proved that $\mathcal {ML}(V)$ and $\mathcal {MO}(V)$ are isomorphic to the Hilbert quotients of the $\mathbb G_m$-action induced by (\ref{ltl}) on $\mathbf{LG}(V \oplus V^*)\! /\!/ \mathbb G_m$ and $\mathbf{OG}^+(V \oplus V^*)\! /\!/ \mathbb G_m$, respectively,  by . 

We now state one of our main results in this paper.
\begin{theorem}\label{modulil} 
After base change to an algebraically closed field, the following holds.
\begin{enumerate}[label={\rm(\Alph*)}]
    \item The Hilbert quotient $\mathbf{LG}(V \oplus V^*)\! /\!/\mathbb G_m$ is isomorphic to $\mathcal {ML}(V)$; its universal family is given by $\mathcal {PL}_{n}:\mathcal {TL}(V)\longrightarrow\mathcal {ML}(V)$. 

    \item The Hilbert quotient $\mathbf{OG}^+(V \oplus V^*)\! /\!/\mathbb G_m$ is isomorphic to $\mathcal {MO}(V)$; its universal family is given by $\mathcal {PO}_{n}:\mathcal {TO}(V)\longrightarrow\mathcal {MO}(V)$.
    
\end{enumerate}
Consequently, all three Hilbert quotients in \cite{Th1} have smooth universal families.
\end{theorem}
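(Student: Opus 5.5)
The approach follows the strategy of \cite{FW} for Grassmannians. Work over an algebraically closed field $\mathbb K$ and treat (A); case (B) is identical with $\mathcal{TO},\mathcal{MO},RO_n,\mathcal{PO}_n$ in place of $\mathcal{TL},\mathcal{ML},RL_n,\mathcal{PL}_n$, using Theorem \ref{gwondo} instead of Theorem \ref{gwondl}. The idea is to produce a flat family of $\mathbb G_m$-stable subschemes of $\mathbf{LG}(V\oplus V^*)$ parametrized by $\mathcal{ML}(V)$, whose general member is a generic orbit closure. This gives a morphism $\mathcal{ML}(V)\to \mathrm{Hilb}$ whose image is the Hilbert quotient, and we then show it is an isomorphism onto its image.

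\textbf{Constructing the family.} Consider
\[
(\mathcal{PL}_n,RL_n):\mathcal{TL}(V)\longrightarrow \mathcal{ML}(V)\times\mathbf{LG}(V\oplus V^*).
\]
Since $\mathcal{TL}(V)$ is the closure of the graph of $\mathcal{KL}(V)$ in $\mathbf{LG}\times\prod_k\mathbb P(\cdots)$, this map is a closed immersion, with image $\overline{\Gamma}$ say. By Theorem \ref{gwondl}(B), $\mathcal{PL}_n$ is flat and projective, so $\overline\Gamma\to\mathcal{ML}(V)$ is a flat family of closed subschemes of $\mathbf{LG}(V\oplus V^*)$. It is $\mathbb G_m$-stable because $\mathcal{KL}(V)$ is $\mathbb G_m$-equivariant and $\mathbb G_m$ acts trivially on the factors $\mathbb P(\bigwedge^kV\otimes\bigwedge^{n-k}V^*)$. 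Over the open ${\rm GL}(V^*)$-orbit of $\mathcal{ML}(V)$, the fiber of $\mathcal{KL}(V)$ through a point $L$ in the big cell is $\overline{\mathbb G_m\cdot L}$. Indeed, the $\prod_k$-components of $\mathcal{KL}(V)$ are $\mathbb G_m$-invariant, and for $L$ the graph of a nondegenerate symmetric form they determine $L$ up to the $\mathbb G_m$-action: the weight-$k$ Plücker components are the $k\times k$ minors, and these determine the form up to scalar. So, by the universal property, we obtain a morphism $\psi:\mathcal{ML}(V)\to\mathrm{Hilb}(\mathbf{LG})$ whose image contains a dense open subset of $\mathbf{LG}/\!/\mathbb G_m$ and is closed. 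Hence $\psi$ surjects onto $\mathbf{LG}(V\oplus V^*)/\!/\mathbb G_m$, and $\overline\Gamma$ is the pullback of the universal family.

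\textbf{Injectivity and isomorphism.} Next I will show that $\psi$ is injective and unramified, i.e.\ a closed immersion. Injectivity can be checked on points. A fiber $\mathcal{PL}_n^{-1}(x)$ is a $\mathbb G_m$-stable chain of orbit closures, and its points determine $x$: applying $\mathcal{KL}(V)$ to the generic points of the components recovers all coordinates in $\prod_k\mathbb P(\bigwedge^kV\otimes\bigwedge^{n-k}V^*)$, because the $k$-th coordinate is read off from the component containing the weight-$k$ limit. Equivalently, this comes from the description of $D_1^-\cong\mathcal{ML}(V)$ as a section of $\mathcal{PL}_n$. For the tangent map, I would use ${\rm GL}(V^*)$-equivariance. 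Since $\mathcal{ML}(V)$ is wonderful (Theorem \ref{gwondl}(C)), every orbit closure contains the unique closed orbit. It therefore suffices to check injectivity of $d\psi$ at one point of the closed orbit, because the locus where $\psi$ is unramified is open and ${\rm GL}(V^*)$-stable. At such a point $x_0$, the fiber is a union of $n+1$ $\mathbb P^1$'s meeting transversally, and we compute $H^0(N_{\overline\Gamma_{x_0}/\mathbf{LG}})$ in the standard local chart. The Kodaira--Spencer map $T_{x_0}\mathcal{ML}\to H^0(N)$ is injective precisely because the $n$ boundary divisors $\check D_i$ smooth the $n$ nodes independently, while the tangent directions along the closed orbit move the chain nontrivially. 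Finally, since $\mathcal{ML}(V)$ is smooth and $\psi$ is a bijective closed immersion onto the Hilbert quotient with reduced structure, $\psi$ is an isomorphism. The statement about smooth universal families then follows from Theorem \ref{gwondl}, Theorem \ref{gwondo}, and \cite{FW}.

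\textbf{Main obstacle.} I expect the hardest step to be this unramifiedness computation at the closed orbit, together with verifying that the fibers are exactly orbit-closure chains and have no embedded components. The latter is what is needed to identify the Hilbert point and to ensure that $\psi$ does not collapse the boundary. In positive characteristic, including characteristic $2$ where Theorem \ref{gwondl}(C) is unavailable, I would instead carry out the computation directly in the explicit affine charts of $\mathcal{TL}(V)$ built in the blow-up construction, as in \cite{FW}.
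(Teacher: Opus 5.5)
Your proposal is correct in outline, but at the decisive step it follows a different route from the paper. Both arguments get $\psi:\mathcal{ML}(V)\to{\rm Hilb}(\mathrm{LG}(n,2n))$ from the flat family $(\mathcal{PL}_n,RL_n)$. Both get injectivity on points by reading off the $\prod_k$-coordinates along the chain (Lemma \ref{moduli1l}(d)).

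After that the paper never examines $d\psi$. It proves instead that the Hilbert quotient itself is smooth:
\begin{itemize}
\item $T_{\mathrm{LG}(n,2n)}$ is globally generated, so $H^1(C,T_{\mathrm{LG}(n,2n)}|_C)=0$ for every nodal chain $C$ of rational curves (by induction on components, as in Fulton--Pandharipande).
\item Hence ${\rm Hilb}$ is smooth along the quotient, and so is the $\mathbb G_m$-fixed component containing it.
\item A bijective morphism onto a smooth target then gives the isomorphism.
\end{itemize}
You instead show that $\psi$ is unramified, hence a closed immersion, by a Kodaira--Spencer computation at the closed orbit combined with ${\rm GL}(V^*)$-equivariance.

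The two routes buy different things. The paper's route needs no group theory and no tangent-map computation. Its last step, however, tacitly uses that $\psi$ is birational, which is automatic only in characteristic $0$. Your route bypasses that issue and yields smoothness of the quotient as a corollary. The cost is that it uses wonderfulness of $\mathcal{ML}(V)$ (so characteristic $\neq 2$) and, as explained below, reduced stabilizers.

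The key step as you wrote it needs two repairs.

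\textbf{The count is off by one.} The fiber over a point $x_0$ of the closed orbit is a chain through the $n+1$ fixed components $\mathcal V_{(n-k,k)}$. It therefore consists of $n$ lines (e.g.\ $\gamma_0\cup\dots\cup\gamma_{n-1}$) with $n-1$ nodes. This matches the $n-1$ divisors $\check D_2,\dots,\check D_n$. For $\mathcal{TO}_n$ it is $[n/2]$ lines and $[n/2]-1$ nodes.

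\textbf{The heuristic must be made precise.} Your phrase ``the divisors smooth the nodes independently, while orbit directions move the chain'' can be turned into a proof using the Mille Cr\^epes charts.
\begin{itemize}
\item In $\mathcal A_l$ the fiber is cut out by fixing $a_{n-l+1}b_{l+1}$ and all other coordinates. So near each node the family has the form $uv=t$, with $t$ a local coordinate on $\mathcal{ML}(V)$ defining the corresponding $\check D_i$. For $n=2$, in $\mathcal A_1$ one gets $t=a_2b_2=\det$.
\item Hence the composite $T_{x_0}\mathcal{ML}\to H^0(C,N_C)\to\bigoplus_p\mathcal{E}xt^1(\Omega_C,\mathcal O_C)_p$ is surjective, with kernel equal to the tangent space of the closed orbit. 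The same chart description also gives the reducedness of the fibers that you flagged.
\item On orbit directions, injectivity amounts to $\operatorname{Lie}\mathrm{Stab}([C])=\operatorname{Lie}\mathrm{Stab}(x_0)$. Injectivity of $\psi$ only gives equality of the groups of $\mathbb K$-points. That suffices when stabilizers are reduced, i.e.\ in characteristic $0$.
\end{itemize}

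With these repairs your argument proves the theorem in characteristic $0$. For positive characteristic, which the statement also covers, ``compute directly in charts'' is still only a promise.
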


%Next, we turn to the birational geometry of $\mathcal{TL}_n$ and $\mathcal {TO}_n$.

To investigate the recognition problem,  Landsberg--Manivel \cite{LM03,LM,LM01,LM02,LM04} established new relations between the representation theory of complex simple Lie groups and the projective algebraic geometry of their homogeneous varieties. %In particular, they introduced algorithms that construct new varieties from old, which lead to new proofs of the classification of compact Hermitian symmetric spaces and the Cartan-Killing classification of complex simple Lie algebras. 
Their construction yields explicit rational maps, which generalized the maps Zak \cite{Zak} introduced for classifying Severi varieties.
Denote by $\mathcal {LML}:\mathbb P^{\frac{n(n+1)}{2}}\dashrightarrow \mathrm{LG}(n,2n)$ and $\mathcal {LMO}:\mathbb P^{\frac{n(n-1)}{2}}\dashrightarrow \mathrm{OG}^+(n,2n)$ the Landsberg--Manivel birational maps (see \S \ref{landmani} for details).   We establish the following natural factorizations.
\begin{theorem}\label{g4kausz}
\begin{enumerate}[label={\rm(\Alph*)}]
    \item The natural projection  $$\mathbb P\big(\bigwedge\nolimits^n(V\oplus V^*)\big)\xdashrightarrow{\,\,\,\,\,\,\,\,\,\,\,\,\,\,\,}\mathbb P\big((\bigwedge\nolimits^{n}V\otimes\bigwedge\nolimits^{0}V^*)\oplus(\bigwedge\nolimits^{n-1}V\otimes\bigwedge\nolimits^{1}V^*)\big)$$ induces a morphism ${\mathcal {KAL}}:\mathcal {TL}_{n}\rightarrow\mathbb P^{\frac{n(n+1)}{2}}$. We have the following commutative diagram.
\vspace{-0.05in}
\begin{equation*}%\label{K=F3l}
\begin{tikzcd}
&&\,\,\mathcal {TL}_{n}\arrow{d}\arrow{ddl}[swap]{\mathcal {KAL}}\arrow{ddr}{RL_{n}}\,\,&\\
&&\,\, {tl}_{n}\arrow{dl}[swap, pos=0.2]{{kal}}\arrow{dr}[pos=0.3]{{rl}_{n}}\,\,&\\
&\mathbb P^{\frac{n(n+1)}{2}}\arrow[dashed]{rr}{\mathcal {LML}}&& {\mathrm{LG}(n,2n)} \\
\end{tikzcd}\vspace{-20pt}\,.
\end{equation*}
Here $\mathcal {KAL}$ and $RL_n$ both consist of $2(n-1)$ successive blow-ups along smooth irreducible centers;  $kal$ and $rl_n$ both consist of $n-1$ successive blow-ups along smooth irreducible centers, each disjoint from the open subscheme over which $\mathcal {LML}$ is an isomorphism. 

\item The natural projection $$\mathbb P\big(\bigoplus\limits_{k=0}^{[n/2]}\bigwedge\nolimits^{2k}V^*\big)\xdashrightarrow{\,\,\,\,\,\,\,\,\,\,\,\,\,\,\,}\mathbb P\big(\bigwedge\nolimits^{0}V^*\oplus\bigwedge\nolimits^{2}V^*\big)$$ induces a morphism ${\mathcal {KAO}}:\mathcal {TO}_{n}\rightarrow\mathbb P^{\frac{n(n-1)}{2}}$. We have the following commutative diagram.
\vspace{-0.05in}
\begin{equation*}%\label{K=F3l}
\begin{tikzcd}
&&\,\,\mathcal {TO}_{n}\arrow{d}\arrow{ddl}[swap]{\mathcal {KAO}}\arrow{ddr}{RO_{n}}\,\,&\\
&&\,\, {to}_{n}\arrow{dl}[swap, pos=0.2]{{kao}}\arrow{dr}[pos=0.25]{{ro}_{n}}\,\,&\\
&\mathbb P^{\frac{n(n-1)}{2}}\arrow[dashed]{rr}{\mathcal {LMO}}&& {\mathrm{OG}^+(n,2n)} \\
\end{tikzcd}\vspace{-20pt}\,.
\end{equation*}
Here $\mathcal {KAO}$ and $RO_n$ both consist of  $n-2$ successive blow-ups along smooth irreducible centers;  $kao$ and $ro_n$ both consist of $[n/2]-1$ successive blow-ups along smooth irreducible centers, each disjoint from the open subscheme over which $\mathcal {LMO}$ is an isomorphism.

\end{enumerate}
\end{theorem}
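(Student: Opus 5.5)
We will work in the affine chart $\mathbb A^{n(n+1)/2}={\rm Sym}^2V^*$ (resp. $\bigwedge^2V^*$) of $\mathbf{LG}(V\oplus V^*)$ (resp. $\mathbf{OG}^+(V\oplus V^*)$), given by graphs $\{(v,Sv)\}$ of symmetric (resp. skew) maps $S:V\to V^*$. In this chart the Pl\"ucker (resp. spinor) coordinates in $\bigwedge^kV\otimes\bigwedge^{n-k}V^*$ (resp. $\bigwedge^{2k}V^*$) are the $k\times k$ minors of $S$ (resp. the Pfaffians ${\rm Pf}_{2k}(S)$, i.e. the components of $\exp S$). Thus $\mathcal{LML}$ (resp. $\mathcal{LMO}$) becomes the map $[t:S]\mapsto$ graph of $S/t$. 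The projection to the first two summands then sends $\mathrm{graph}(S)$ to $[1:S]$. For part (A), I will first check that this projection is defined on all of $\mathcal{TL}_n$. On the open orbit this is clear. The closure of the graph of a rational map into $\mathbb P^N\times\prod_k\mathbb P(\bigwedge^kV\otimes\bigwedge^{n-k}V^*)$ contains a point where both the $k=n$ and $k=n-1$ components vanish. However, the factor $\mathbb P(\bigwedge^{n}V\otimes\bigwedge^0V^*\oplus\bigwedge^{n-1}V\otimes\bigwedge^1V^*)$ is recovered from the first two product factors, which never vanish simultaneously. I would show this by using the graph factors $\mathbb P(\bigwedge^{n-1}V\otimes V^*)$ together with the Segre-type relation. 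Concretely, we need to show that the projection from $\mathbb P(\bigwedge^n(V\oplus V^*))$ is resolved by the tower of factors $\mathbb P(\bigwedge^kV\otimes\bigwedge^{n-k}V^*)$, by the same argument as in \cite{FW} for Grassmannians.

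Next I will construct $tl_n$ as the closure of the graph of $\mathcal{LML}$ inside $\mathbb P^{n(n+1)/2}\times\mathrm{LG}(n,2n)\times\prod_{k=2}^{n-1}\mathbb P(\bigwedge^k\text{-minors})$. Its presentation via $kal$ is the standard complete-quadrics-type sequence on $\mathbb P({\rm Sym}^2V^*\oplus\mathbb Z)$: blow up the hyperplane-at-infinity strata $\{t=0,\ {\rm rank}\,S\le k\}$ successively for $k=1,\dots,n-1$, in increasing rank order. Each center is smooth after the previous blow-ups, by the Vainsencher argument \cite{Va2} (valid over $\mathbb Z$, cf. \cite{FW}). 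Symmetrically, $rl_n$ is the sequence of blow-ups of $\mathrm{LG}(n,2n)$ along the strata $\{L:\dim(L\cap(0\oplus V^*))\ge n-k\}$, closures of Schubert cells, transformed in increasing order of size. The identification of these two blow-ups uses the involution $S\mapsto S^{-1}$ exchanging the roles. All these centers lie in $\{t=0\}$, respectively in the complement of the big cell, so they are disjoint from the isomorphism locus of $\mathcal{LML}$.

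Finally, $\mathcal{TL}_n\to tl_n$ is the further $n-1$ blow-ups. Recall that $\mathcal{TL}_n$ is obtained from $\mathrm{LG}$ by the $2(n-1)$ blow-ups that produce $D^\pm_i$, $i\ge2$, in the canonical order given by the orbit structure of Theorem \ref{gwondl}(C). By Theorem \ref{gwondl}, $RL_n$ factors as these blow-ups. I will order them so that the $n-1$ centers producing $D^+_i$ (those sitting over $\{t=0\}$) come first; these give $rl_n$. The remaining centers, over the closures of the strata $\{\dim L\cap(V\oplus0)\ge\cdot\}$, are disjoint from the locus where the Landsberg--Manivel map is undefined. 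Hence $\mathcal{KAL}$ also consists of $2(n-1)$ blow-ups: the $n-1$ of $kal$ followed by the same $n-1$ extra ones pulled back. Commutativity holds because all maps agree on the dense open orbit and the targets are separated.

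The orthogonal case (B) is identical, with Pfaffians replacing minors and with rank strata of even rank only. This gives $[n/2]-1$ centers for $kao$ and $ro_n$. The count $n-2$ for $\mathcal{KAO}$ and $RO_n$ comes from $2([n/2]-1)$ when $n$ is even. When $n$ is odd one extra divisor coincides, so I would check the count against the divisor list of Theorem \ref{gwondo}.

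The main obstacle is proving that the iterated centers are smooth and irreducible and that the blow-ups commute in the required order, compatibly with $\mathcal{TL}_n$. I would handle this by local coordinate computation on the ${\rm GL}(V^*)$-stable affine charts around the closed orbits, reducing to the diagonal (resp. block-diagonal) normal forms as in \cite{FW}.
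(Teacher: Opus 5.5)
Your end picture matches the paper's: $kal$ blows up $\{t=0,\ \mathrm{rank}\,S\le k\}$ for $k=1,\dots,n-1$, $rl_n$ blows up $\{\dim L\cap V^*\ge n-k\}$ for $k=0,\dots,n-2$, and $\mathcal{TL}_n\to tl_n$ performs the remaining $\{\dim L\cap V\ge\cdot\}$ blow-ups. The gap is the step that carries the theorem: that the tower over $\mathbb P^{\frac{n(n+1)}{2}}$ and the tower over $\mathrm{LG}(n,2n)$ are the same scheme, compatibly with $\mathcal{LML}$. You do not supply it, and the devices you offer for it fail.

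\begin{itemize}
\item Your graph closure in $\mathbb P^{\frac{n(n+1)}{2}}\times\mathrm{LG}(n,2n)\times\prod_{k=2}^{n-1}\mathbb P(\bigwedge^k)$ is not $tl_n$. The $k$-th extra factor records $[t:S]\mapsto[\bigwedge^kS]$, whose indeterminacy $\{\mathrm{rank}\,S\le k-1\}$ contains $[1:0]$. So this closure modifies the open set where $\mathcal{LML}$ is an isomorphism, contradicting the disjointness you then claim.
\item The involution $S\mapsto S^{-1}$ only records that $\mathcal{LML}$ reads $Z=tS^{-1}$ in the chart at $V^*$. It does not identify two iterated blow-ups.
\item Your claim that the $\{\dim L\cap V\ge j\}$-centers avoid the indeterminacy is false. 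For $2\le j\le n-1$ their image in $\mathbb P^{\frac{n(n+1)}{2}}$ is the cone $\{\mathrm{rank}\,S\le n-j\}$, which meets the base locus $\{t=0,\ \det S=0\}$ of $\mathcal{LML}$. On $\mathrm{LG}(n,2n)$, for $n\ge4$, $L=\langle e_1,\dots,e_{n-2},e^*_{n-1},e^*_n\rangle$ lies in $\{\dim L\cap V\ge2\}\cap\{\dim L\cap V^*\ge2\}$. You do not need this claim: $kal$ followed by $n-1$ blow-ups of $tl_n$ is already $2(n-1)$ successive blow-ups.
\item The extension of the projection is misexplained. The actual reason is that on each Mille Cr\^epes chart the pulled-back ideals of the $\mathbb I^0_n$- and $\mathbb I^1_n$-coordinates are principal monomial ideals with one dividing the other (cf.\ (\ref{te})), so their sum is invertible.
\end{itemize}

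The missing idea is Lemma \ref{g2kausz}. At step $k$ one blows up $Y^{(k-1)}_{k-1}\cup Z^{(k-1)}_{n-k}$. This pairs the rank stratum $\{\mathrm{rank}\,S\le k-1\}$ (the counterpart of $\{\dim L\cap V\ge n-k+1\}$) with the stratum at infinity $\{t=0,\ \mathrm{rank}\,S\le k\}$. The chart computations (\ref{cen0})--(\ref{cen2}) show that the two pieces are disjoint, smooth and irreducible. They also show that the resulting charts reproduce the Mille Cr\^epes charts, e.g.\ $\mathcal{KAL}^{-1}(V_{00})\cong RL_n^{-1}(U_0)$ and $V^{(1)}_{11,00}\leftrightarrow RL_n^{-1}(U_1)$, so $\mathrm{KAL}_n\cong\mathcal{TL}_n$. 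Disjointness then lets one perform only the $Z$-blow-ups to obtain $tl_n$.

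Your bookkeeping also needs fixing:
\begin{itemize}
\item The exceptional divisors of $RL_n$ are $D^\pm_i$ with $1\le i\le n-1$; $D^\pm_n$ are the strict transforms of $b_0,b_n$.
\item In (B) for odd $n$ nothing ``coincides''. The $V$-side has $[n/2]$ nontrivial centers, $\{\dim L\cap V\ge n\},\dots,\{\dim L\cap V\ge3\}$. The $V^*$-side has $[n/2]-1$, because $\{\dim L\cap V^*\ge2\}$ is already the divisor off the big cell. The total is $n-2$.
\item Citing Vainsencher ``over $\mathbb Z$'' does not cover characteristic $2$, where symmetric forms need not diagonalize. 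That is why the paper introduces the Type-II charts.
\end{itemize}
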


%We state one of the main results of the paper as follows.

%Recall from \cite{Fu,BS,Kap} the definition of the Hilbert quotients as follows. 

%In the following, we fix an arbitrary algebraically closed field $\mathbb K$.

In the following, we %base change $\mathcal {TL}(V)$ and $\mathcal {TO}(V)$ to
work over a fixed algebraically closed field $\mathbb K$ of characteristic $0$. 

For a spherical variety, the nef cone is generated by its colors if it is simple, whereas the general situation is highly intricate (see \cite{Per}). Therefore, it is noteworthy that the nef cones here are cones over chain polytopes in the sense of Stanley \cite{Sta} (see Remark \ref{chp}).
\begin{theorem}[=Propositions \ref{nefcl}, \ref{nefco}]\label{pen}
Index the boundary divisors 
 as in \S \ref{foliation}, and define
\begin{equation}\label{dell}
\begin{split}
&\Delta^+_k:=-\sum\nolimits_{i=1}^k(k+1-i)D^+_i,\,\,\,\,\,\,\Delta^-_k:=-\sum\nolimits_{i=1}^k(k+1-i)D^-_i,\,\,0\leq k\leq n-1.
\end{split}
\end{equation} 
The nef cone
$\operatorname{Nef}(\mathcal {TL}_n)$ is generated by \begin{equation}\label{nefll}
    \{(RL_n)^*\mathcal O_{\mathrm{LG}(n,2n)}(1)+\Delta_i^++\Delta_{j}^-\,|\,0\leq i,j\leq n-1\,\,{\rm and}\,\,i+j\leq n \},
\end{equation}
and $\operatorname{Nef}(\mathcal {TO}_n)$  is generated by 
\begin{equation}\label{nefo}
\{(RO_n)^*\mathcal O_{\mathrm{OG}^+(n,2n)}(1)+\Delta_i^++\Delta_{j}^-\,|\,0\leq i\leq [\frac{n}{2}]-1,\,\,0\leq j\leq [\frac{n-1}{2}],\,\,{\rm and}\,\,i+j\leq [\frac{n}{2}] \}.    
\end{equation}
\end{theorem}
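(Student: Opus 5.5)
We will use that, by Theorem \ref{gwondl}(C) and Theorem \ref{gwondo}(C), $\mathcal{TL}_n$ and $\mathcal{TO}_n$ are smooth projective toroidal ${\rm GL}(V^*)$-spherical varieties. For toroidal (more generally complete) spherical varieties, a Cartier divisor is nef if and only if it has nonnegative degree on the finitely many ${\rm GL}(V^*)$-stable curves, and these curves generate the Mori cone (Brion's criterion). The plan is therefore in three steps. First, describe ${\rm Pic}$ and the invariant curves. Second, compute intersection numbers. Third, dualize the resulting finite system of linear inequalities and recognize the extremal rays as the classes in (\ref{nefll}) and (\ref{nefo}).

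\textbf{Picard group.} Since $RL_n$ is a composition of $2(n-1)$ blow-ups along smooth irreducible centers (Theorem \ref{g4kausz}), we have
\[
{\rm Pic}(\mathcal{TL}_n)\otimes\mathbb Q=\mathbb Q\,(RL_n)^*\mathcal O(1)\oplus\bigoplus_{i\ge 2}\mathbb Q D_i^+\oplus\bigoplus_{i\geq 2}\mathbb Q D^-_i .
\]
Here $D_1^\pm$ are expressible through the pullback and the other boundary divisors, or vice versa; I will fix the basis matching (\ref{dell}). The same holds for $\mathcal{TO}_n$ with $[n/2]$ and $[(n-1)/2]$ exceptional divisors on the two sides. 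This parity shift explains the asymmetric index range in (\ref{nefo}).

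\textbf{Invariant curves.} By the orbit description (\ref{inrulel}), the closed orbits correspond to maximal index sets $I^+,I^-$ with $\min I^++\min I^-\ge n+2$. The invariant curves are the one-dimensional orbit closures, i.e.\ closures of orbits indexed by sets of size one less, together with curves in the closed orbits (flag-type varieties). The latter are handled by restricting to the fibers of $\mathcal{PL}_n$ and to $D_1^-\cong\mathcal{ML}_n$. Since $D_1^-$ is wonderful, its nef cone is generated by its colors, and the classes of $\check D_i$ are known from the complete quadrics description (Proposition \ref{red2l}). For each curve type I will compute its intersection with $(RL_n)^*\mathcal O(1)$ (via the Plücker degree of its image in ${\rm LG}(n,2n)$) and with each $D^\pm_i$. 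The latter is done in local toroidal charts: the boundary divisors meet transversally (snc), so intersections with the $D_i^\pm$ not containing the curve are $0$ or $1$, and self-intersection-type terms come from normal bundle degrees computed in the blow-up description.

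\textbf{Dualizing.} The expectation is that the inequalities take a ``chain'' form: for a divisor $H+\sum a_iD_i^++\sum b_iD_i^-$, nefness amounts to the coefficient sequences being concave/decreasing in the appropriate sense, plus a coupling inequality between the $+$ and $-$ sides reflecting the condition $\min I^++\min I^-\ge n+2$. The extremal solutions of such a system are exactly the differences $\Delta_i^+$, $\Delta_j^-$ (second differences of the coefficients are Kronecker deltas) with $i+j\le n$, which is a chain polytope in Stanley's sense. I will verify directly that each listed class is nef, for instance by exhibiting it as a pullback of $\mathcal O(1)$ under a projection of $\mathcal{TL}_n$ to a product of factors of the ambient space of $\mathcal{KL}(V)$, which makes it globally generated. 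Then I will show that every nef class lies in their cone by producing, for each facet, a dual curve.

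The main obstacle is the last step: identifying enough invariant curves and computing their intersection numbers with the $D_i^\pm$ correctly, especially the coupling curves that live in orbits where both $D^+$ and $D^-$ divisors are active. For these I will reduce to small cases ($n=2,3$) and exploit the retraction $\mathcal{PL}_n$ with its fiber structure.
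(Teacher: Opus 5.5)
Your overall architecture (test curves, then linear inequalities, then the extremal rays of a chain-polytope cone) is the paper's. However, there is a genuine gap exactly where the content lies.

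\textbf{The nefness criterion and the missing curves.} The criterion you start from is wrong as stated. On a complete spherical variety the Mori cone is generated by $B$-stable curves (Theorem \ref{sph}), not ${\rm GL}(V^*)$-stable ones. For $n\ge 3$ there are no ${\rm GL}_n$-stable curves at all, since every orbit has dimension at least $n(n-1)/2$ (a closed orbit is a codimension-$n$ intersection of boundary divisors). So ``nonnegative on ${\rm GL}(V^*)$-stable curves'' is vacuous. Likewise, the orbit closures indexed by sets one smaller than maximal are not curves.

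More importantly, you never exhibit the curves or their intersection numbers, and the chain form remains an expectation. What is needed:
\begin{itemize}
\item $\gamma_l$ ($0\le l\le n-1$), the closure of the $b_{l+1}$-axis in $\mathcal A_l$;
\item $\zeta^{\pm}_j$, the $\xi_{j(j+1)}$-axis (resp.\ $\xi_{(n-j)(n+1-j)}$-axis) in the closed orbit at the origin of $\mathcal A_0$ (resp.\ $\mathcal A_n$).
\end{itemize}
Each $\gamma_l$ is contracted by every $f^k$, hence lies in a fibre of $\mathcal{PL}_n$, so your instinct about the fibres is right. It maps to a line in $\mathbb P^{N_n}$. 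Thus $\gamma_l\cdot H=1$ and $\gamma_l\cdot H_k=0$, with $H_k=B_k$ expressed by (\ref{bstl}). Disjointness from $D^-_1,\dots,D^-_l,D^+_1,\dots,D^+_{n-l-1}$ then determines the rest (Lemma \ref{i1l}).

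Wonderfulness of $D^-_1$ and its nef cone do not supply these numbers. The $\zeta^-_j$ lie in every $D^-_i$, so the $\zeta^-_j\cdot D^-_i$ are normal-bundle degrees, which are extrinsic to $D^-_1$. In the basis $H,\Delta^\pm_k$, for $D=hH+\sum a_i\Delta^-_i+\sum b_j\Delta^+_j$ one gets $\zeta^-_j\cdot D=a_j$, $\zeta^+_j\cdot D=b_j$, and $\gamma_l\cdot D=h-\sum_{i\ge l+1}a_i-\sum_{j\ge n-l}b_j$. So the coupling consists of $n$ inequalities, one per $l$, not a single one. Only at this point does the dualization (the paper's induction using $B_k$, or Stanley's theorem) become a finite combinatorial problem.

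For $\mathcal{TO}_n$, note that the plus side has $[n/2]-1$ exceptional divisors, not $[n/2]$ (Lemma \ref{excepo}). This is what produces the asymmetric index ranges in (\ref{nefo}).

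\textbf{The nefness check for the listed classes.} Your two-sided logic (prove the listed classes nef, then cut the nef cone down with one dual curve per facet) is sound. It would even spare you the generation statement of Lemma \ref{n1}. But the nefness check you propose misses most generators. Pullbacks of $\mathcal O(1)$ from (products of) factors of $\mathbb P^{N_n}\times\prod_k\mathbb P^{N^k_n}$ are nonnegative combinations of $H$ and $H_k=H+\Delta^+_{n-k}+\Delta^-_k$. They therefore reach only the generators with $i+j\in\{0,n\}$; for example, $H+\Delta^-_1=H-D^-_1$ is not of this form.

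For $0<i+j<n$ you need the \emph{linear} projection of $\mathbb P^{N_n}$ onto the span of the coordinates $z_I$, $I\in\mathbb I^k_n$, $j\le k\le n-i$, as for $\mathcal{KAL}$ in Theorem \ref{g4kausz}. You must then check, via (\ref{te}) and $B_k\cap\mathcal A_l=\emptyset$, that its base ideal is principal on each $\mathcal A_l$: it is generated by $P_{I_j}$ if $l<j$, by $P_{I_{n-i}}$ if $l>n-i$, and is trivial otherwise. The moving part is then $H+\Delta^+_i+\Delta^-_j$, which is globally generated.
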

To our knowledge, this provides the first example in arbitrary dimension of a spherical variety with multiple closed orbits whose nef cone admits such a simple characterization.

%Cox rings were first introduced by Cox for toric varieties \cite{Cox}, and then his construction was generalized to projective varieties in \cite{HK}. Recall that Cox rings are the direct sum of the spaces of sections of all isomorphism classes of line bundles. Theoretically there is a way of computing the cone of movable divisors and the Mori chamber decomposition of a Mori dream space starting from an explicit presentation of its  can be summarized as follows. \begin{proposition}\label{prop:Cox-TLn}$\operatorname{Cox}(\mathcal{TL}_n)$ (resp.  $\operatorname{Cox}(\mathcal{TO}_n)$) is finitely generated by the canonical sections of the boundary divisors and colors of $\mathcal {TL}_n$ (resp. $\mathcal {TO}_n$). The extremal rays of $\operatorname{Mov}(\mathcal{TL}_n)$  (resp. $\operatorname{Mov}(\mathcal{TO}_n)$) are generated by effective divisor classes lying on facets of $\operatorname{Eff}(\mathcal{TL}_n)$ (resp. $\operatorname{Eff}(\mathcal{TO}_n)$) cut out by the colors. In particular, $\operatorname{Nef}(\mathcal {TL}_2)=\operatorname{Mov}(\mathcal {TL}_2)$, and for $n=3,4$, $\operatorname{Nef}(\mathcal{TL}_{n}) \subsetneq \operatorname{Mov}(\mathcal{TL}_{n})$; the Mori chamber decomposition of $\operatorname{Eff}(\mathcal {TL}_3)$ consists of $8$ chambers.
%\[\operatorname{Nef}(T L_n)\subsetneq \operatorname{Mov}(T L_n),\qquad n=3,4.\]\end{proposition}

A smooth variety is called {\it Fano} if its anticanonical bundle is ample, and {\it weak Fano} if its anticanonical bundle is nef and big. The spaces of complete collineations, complete quadrics, and complete skew-forms are all Fano varieties, as established in \cite{DP,LoH,Mas1,Mas2}. By enumerating all invariant curves, we obtain the (semi-)positivity of their universal families as follows.
\begin{theorem}\label{fano}
$\mathcal{TL}_n$ and $\mathcal{TO}_n$ are smooth weak Fano varieties. Among them, the Fano varieties are $\mathcal{TL}_1\cong\mathbb P^1$, $\mathcal{TL}_2$, $\mathcal{TO}_2\cong\mathbb P^1$, $\mathcal{TO}_3$, $\mathcal{TO}_4$, and $\mathcal{TO}_5$.
\end{theorem}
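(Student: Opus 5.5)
Since $\mathcal{TL}_n$ and $\mathcal{TO}_n$ are smooth projective toroidal spherical varieties (Theorems \ref{gwondl}(C), \ref{gwondo}(C)), I would compute the anticanonical class and test it against the nef cone of Theorem \ref{pen}. For a smooth spherical variety $X$ with boundary divisors $D_i$ and colors $F_\alpha$, there is the Brion formula
\[
-K_X=\sum_i D_i+\sum_\alpha a_\alpha F_\alpha ,
\]
where the coefficients $a_\alpha$ are determined by the $B$-weight $2\rho-2\rho_P$ of the canonical $B$-semi-invariant section. For the toroidal case I would instead use the equivalent description: since the boundary is snc, $-K_X-\sum D_i$ is the pullback of $-K$ of the flag variety $G/P$ along the toroidal map to $G/P$ attached to the closed orbit. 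Concretely, I would express every class in the basis $(RL_n)^*\mathcal O(1)$, $D_i^\pm$ using the blow-up descriptions of Theorem \ref{g4kausz}. $RL_n$ is a composition of $2(n-1)$ blow-ups of $\mathrm{LG}(n,2n)$ with smooth centres whose exceptional divisors are the $D^\pm_i$ for $i\ge2$. Hence
\[
-K_{\mathcal{TL}_n}=(n+1)\,RL_n^*H-\sum_{i\ge 2}(c_i^+-1)D_i^+-\sum_{i\ge2}(c_i^--1)D^-_i,
\]
where $c_i^\pm$ is the codimension of the corresponding centre, read off from the orbit dimensions. The same applies to $\mathcal{TO}_n$ with index $2n-2$ of $\mathrm{OG}^+(n,2n)$. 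The first step is to compute these codimensions carefully, taking care that $D_1^\pm$ are not exceptional for $RL_n$.

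Next I would rewrite this class in the $\Delta^\pm_k$ coordinates of (\ref{dell}). Since $\Delta_k^+-\Delta_{k-1}^+=-\sum_{i\le k}D_i^+$, the coefficient vector converts by second differences. Nefness of $-K$ then means that $-K$ lies in the cone generated by (\ref{nefll}), respectively (\ref{nefo}). This is cleanest to verify dually: check $-K\cdot C\ge0$ on the $G$-stable curves of the closed orbits. Those curves generate the Mori cone of a spherical variety, and they are exactly what the proof of Propositions \ref{nefcl} and \ref{nefco} used. So I would enumerate the invariant curves (orbits of $T$ and lines in closed orbits) and compute intersection numbers with $H$ and $D_i^\pm$ via the blow-up formulas. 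Bigness is automatic, since $-K_X-\sum D_i$ is the pullback of a big class from the open orbit's point of view and $-K$ is effective with support containing that image. Alternatively, nefness together with $(-K)^{\dim}>0$ would suffice, which is immediate because $-K$ equals $RL_n^*$ of an ample class plus an effective exceptional combination positive on the generic fibre.

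For the Fano classification, ampleness means strict positivity on all extremal invariant curves, that is, $-K$ lying in the interior of the nef cone. I expect that for large $n$ some invariant curve, for instance a fibre curve of $\mathcal{PL}_n$ over a point in $\check D_i$, or a curve contracted by the first blow-up with the largest-codimension centre, gives $-K\cdot C=0$. I would locate this curve explicitly and show that the degree vanishes exactly when $n\ge3$ in the Lagrangian case and $n\ge6$ in the orthogonal case; the low cases are checked directly from the same formulas. The main obstacle is the bookkeeping of the intersection numbers of the invariant curves with the $D^\pm_i$. I would organise this by restricting to the toric (or rank-one) slices along the closed orbits, where the intersections reduce to combinatorics of the chain polytope.
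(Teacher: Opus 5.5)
Your skeleton (compute $-K$ from the discrepancies of the iterated blow-up, then test it on the $T$-stable curves generating $\operatorname{NE}$) is the paper's route, via Lemmas~\ref{excep}, \ref{excepo}, \ref{i1l}, \ref{i1o} and \ref{n1}. However, two of your inputs are wrong, and each one derails the computation.

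First, you have the ends of the boundary backwards. $D_1^-=(RL_n)^{-1}(V\oplus0)$ and $D_1^+=(RL_n)^{-1}(0\oplus V^*)$ \emph{are} exceptional. They lie over the isolated fixed points (source and sink), which are the first centres of the two blow-up sequences in the proof of Lemma~\ref{excep}, and have discrepancy $\frac{n(n+1)}{2}-1$. The non-exceptional boundary divisors are $D_n^-=B_n$ and $D_n^+=B_0$, the strict transforms of the Pl\"ucker divisors $b_n,b_0$. Reading off codimensions correctly gives $\operatorname{codim}\overline{\mathcal V^-_{(n-k,k)}}=\frac{(n-k)(n-k+1)}{2}$, hence $-K=(n+1)H-\sum_{i=1}^{n-1}\frac{(n+3-i)(n-i)}{2}(D_i^++D_i^-)$. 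Putting the discrepancies on $D_2^\pm,\dots,D_n^\pm$, as you propose, makes every later intersection number wrong. For $\mathcal{TO}_n$ with $n$ odd the picture is not even symmetric: $D^-_{[n/2]}$ is exceptional and $D^+_{[n/2]}$ is not.

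Second, curves in closed orbits do not generate the Mori cone of a spherical variety of positive rank; for a toric variety the closed orbits are points. The $\zeta_j^\pm$ do lie in closed orbits. The $\gamma_l$, however, lie in the rank-one orbit closures $\bigcap_{i\ge l+2}D_i^-\cap\bigcap_{i\ge n-l+1}D_i^+$ and join two closed orbits, and they are exactly the curves that decide the statement.

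With the correct data, the classification you leave as an expectation is short. Put $c_i=\frac{(n+3-i)(n-i)}{2}$, so $c_n=0$ and $c_i-c_{i+1}=n+1-i$. Lemma~\ref{i1l} then gives:
\begin{itemize}
\item $-K\cdot\gamma_l=(n+1)-(n-l)-(l+1)=0$ for $1\le l\le n-2$;
\item $-K\cdot\gamma_0=-K\cdot\gamma_{n-1}=1$;
\item $-K\cdot\zeta^\pm_j=c_j-2c_{j+1}+c_{j+2}=1$ for $j\le n-2$, and $-K\cdot\zeta^\pm_{n-1}=c_{n-1}=2$.
\end{itemize}
The same bookkeeping with Lemmas~\ref{excepo} and \ref{i1o} gives $-K\cdot\gamma_l=0$ exactly for $1\le l\le[\frac n2]-2$ and positive values otherwise, whence the threshold $n\ge6$.

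Moreover, $-K$ is strictly positive on every curve in a closed orbit of $\mathcal{TL}_n$. Such a curve maps finitely into the closed orbit of $\mathcal{ML}_n$ (by equivariance, since both closed orbits have dimension $\frac{n(n-1)}{2}$), so it meets some color $B_k=H_k$ positively. A $-K$-trivial class, by contrast, is a combination of the middle $\gamma_l$, on which all $H_k$ vanish. So a test restricted to closed-orbit curves would wrongly certify ampleness for every $n$.

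Your first guess is the right curve, and it gives a clean proof of the non-Fano direction. The chain $\gamma_0\cup\cdots\cup\gamma_{n-1}$ is the reduced fibre of $\mathcal{PL}_n$ over a $T$-fixed point of the closed orbit of $\mathcal{ML}_n$. By flatness it is numerically equivalent to a smooth fibre $F\cong\mathbb P^1$, which has $-K\cdot F=2$. Ampleness would give each component degree at least $1$, forcing $n\le2$ (resp.\ $[\frac n2]\le2$). Your second guess fails: a curve contracted by $RL_n$ has $H$-degree $0$, so its class is a nonnegative combination of the $\zeta_j^\pm$ and it is $-K$-positive.

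Finally, your bigness argument has two problems:
\begin{itemize}
\item The ``immediate'' version has the sign reversed: $-K$ is $RL_n^*$ of an ample class \emph{minus} an effective exceptional divisor.
\item $-K-\sum D_i$ is not in general a pullback from a flag variety. On $\mathbb P^3$, the wonderful compactification of $\mathrm{PGL}_2$, it is the ample class $2H$.
\end{itemize}
Use instead $-K=\sum_m B_m+\sum_i(D_i^++D_i^-)$ from Lemma~\ref{excep}. This is an effective divisor whose support is the complement of the affine open $B$-orbit, hence big.
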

%(See \cite{Fang} for the corresponding results for the case of Grassmannians.)

It is well-known that local deformations of weak Fano varieties are unobstructed. Bien-Brion \cite{BB} proved that any smooth projective toroidal spherical Fano variety is locally rigid, meaning it admits no non-trivial local deformation. By exploiting the toroidal property and the 
explicit generating sets of the effective cones determined in Propositions \ref{effl}, \ref{effo}, 
%and discuss Cox rings, movable cones and Mori chamber decompositions (see Propositions \ref{prop:Cox-TLn1}, \ref{thm:Mov-TLn}, and Examples \ref{n=2}, \ref{n=3}, \ref{n=4}). 
we establish %the following vanishing results and thereby determine the local deformations of the weak Fano varieties $\mathcal{TL}_n$ and $\mathcal{TO}_n$.
\begin{theorem}\label{bignef}
For $X=\mathcal {TL}_n,\mathcal {TO}_n$, we have that
\begin{equation*}
 H^k(X, T_{X})=0   \,\,{\rm for\,\,all\,\,} k>0,
\end{equation*} 
where $T_X$ is the tangent bundle of $X$. In particular, $\mathcal {TL}_n$, $\mathcal {TO}_n$ are locally rigid.
\end{theorem}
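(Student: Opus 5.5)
The plan is to use the standard description of the tangent sheaf of a smooth toroidal spherical variety. By Theorem \ref{gwondl}(C) and Theorem \ref{gwondo}(C), $X=\mathcal{TL}_n$ or $\mathcal{TO}_n$ is a smooth complete toroidal ${\rm GL}(V^*)$-spherical variety with simple normal crossing boundary $\partial X=\sum_i D_i^+ + \sum_i D_i^-$. For such $X$, the logarithmic tangent sheaf $T_X(-\log\partial X)$ is generated by global sections coming from the Lie algebra $\mathfrak g$ of ${\rm GL}(V^*)$; Brion and Bien--Brion further show that $H^k(X,T_X(-\log \partial X))=0$ for $k>0$. We then use the residue exact sequence
\begin{equation*}
0\to T_X(-\log\partial X)\to T_X\to \bigoplus\nolimits_{D} \mathcal O_D(D)\to 0,
\end{equation*}
where $D$ runs over the $2n$ (respectively $2[\frac n2]$) boundary divisors. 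This reduces the claim to showing $H^k(D,\mathcal O_D(D))=0$ for all $k>0$ and every boundary divisor $D$, together with surjectivity on $H^0$ when $k=1$; the latter is automatic once $H^1(T_X(-\log\partial X))=0$.

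For each $D$, we compute $H^k(D,\mathcal O_D(D))$ using $0\to\mathcal O_X\to\mathcal O_X(D)\to\mathcal O_D(D)\to0$. Since $X$ is a smooth projective spherical (hence rational) variety, $H^k(X,\mathcal O_X)=0$ for $k>0$. It therefore suffices to prove $H^k(X,\mathcal O_X(D))=0$ for $k\ge1$. Here the explicit effective cones from Propositions \ref{effl} and \ref{effo} come in. On a spherical variety, Kawamata--Viehweg vanishing in the form $H^k(X,\mathcal O_X(K_X+L))=0$ for $L$ nef and big, combined with the weak Fano property from Theorem \ref{fano}, gives vanishing for $\mathcal O_X(D)$ as soon as $D-K_X=D+(-K_X)$ is nef and big. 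Alternatively, we can write $D$ as a nef divisor plus a boundary-supported correction with coefficients in $(-1,0]$ after perturbing. We will check this using the nef cone generators of Theorem \ref{pen} and the anticanonical divisor of a toroidal spherical variety, $-K_X=\sum_i D_i^\pm+\sum(\text{colors with positive coefficients})$. Concretely, $-K_X-D$ removes one boundary component, and the aim is to show that $D+(-K_X)$ is big (it is $-K_X$ plus an effective divisor) and nef. If nefness fails, we instead apply Brion's vanishing for spherical varieties: $H^k(X,\mathcal O_X(E))=0$ for $k>0$ whenever $E$ is effective and ``$B$-stable of small type.'' Another route is to restrict to $D$ directly and use that $D$ is itself a toroidal spherical variety for a Levi-type subgroup, with $\mathcal O_D(D)$ computed through the retraction $\mathcal{PL}_n$ (resp. $\mathcal{PO}_n$) of Theorem \ref{gwondl}(B).

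The main obstacle is this last step: controlling $H^{>0}(D,\mathcal O_D(D))$ for the normal bundles of the boundary divisors. $\mathcal O_D(D)$ is typically not nef, since boundary divisors in wonderful-type compactifications have negative normal bundle along some curves. My first approach is therefore to use the explicit generators of $\operatorname{Eff}(X)$ from Propositions \ref{effl} and \ref{effo} to write $D$ as a combination $N-\sum a_jD_j$ of a nef class $N$ from Theorem \ref{pen} and boundary divisors with $0\le a_j<1$ up to adding $-K_X$. I would then invoke Kawamata--Viehweg (valid in characteristic $0$) in its $\mathbb Q$-divisor form with the snc boundary. For the two ``extremal'' divisors $D_1^\pm$, which are sections isomorphic to $\mathcal{ML}$ (resp. $\mathcal{MO}$), one can argue more directly: via the flat retraction, $X$ restricted near $D_1^\pm$ looks like a $\mathbb P^1$-type family. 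There $\mathcal O_D(D)$ is the (anti)tautological line bundle of the $\mathbb G_m$-action, and its higher cohomology vanishes by the Fano property and Kodaira vanishing on the wonderful varieties $\mathcal{ML}$, $\mathcal{MO}$.

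Once $H^k(X,T_X)=0$ for $k>0$ is established, local rigidity follows from $H^1(X,T_X)=0$ by standard deformation theory.
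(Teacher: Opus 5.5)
Your skeleton is the same as the paper's: the Bien--Brion sequence $0\to T_X(-\log\partial X)\to T_X\to\bigoplus_D\mathcal O_D(D)\to 0$, vanishing of $H^{>0}$ of the logarithmic tangent sheaf on a smooth complete toroidal spherical variety, and reduction to $H^k(D,\mathcal O_D(D))=0$ for $k>0$. No surjectivity on $H^0$ is needed, since vanishing of $H^k$ of the two outer terms for $k\ge 1$ already kills $H^k(T_X)$.

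\textbf{The gap.} The step you flag as the main obstacle is never carried out, and the route you put first fails.
\begin{itemize}
\item Passing to $H^k(X,\mathcal O_X(D))$ gains nothing: since $H^{>0}(X,\mathcal O_X)=0$, it is isomorphic to $H^k(D,\mathcal O_D(D))$ for $k\ge 1$.
\item Kawamata--Viehweg with $D-K_X$ nef breaks down. For $n\ge 3$, Lemma \ref{i1l}, Lemma \ref{excep} and (\ref{bstl}) give $-K\cdot\gamma_{n-2}=0$, one of the curves where $\mathcal{TL}_n$ fails to be Fano. They also give $D^-_n\cdot\gamma_{n-2}=1-2=-1$. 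Hence $(D^-_n-K)\cdot\gamma_{n-2}=-1$, so $D^-_n-K$ is not nef.
\item Your fallbacks are not arguments: an unverified fractional-boundary decomposition, an unspecified ``Brion vanishing for $B$-stable divisors of small type'', and a $\mathbb P^1$-fibration heuristic near $D_1^\pm$. On a general smooth toric variety, $H^1(\mathcal O_D(D))$ can be nonzero for a boundary divisor; the negative section of a Hirzebruch surface $F_a$ with $a\ge 2$ is an example. So something specific to $X$ must be used.
\end{itemize}

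\textbf{The missing idea is adjunction.} We have $\mathcal O_D(D)\cong\omega_D\otimes\mathcal O_X(-K_X)|_D$. Kawamata--Viehweg on the smooth projective variety $D$ then gives $H^k(D,\mathcal O_D(D))=0$ for $k>0$, provided $-K_X|_D$ is nef and big on $D$.
\begin{itemize}
\item Nefness is free from the weak Fano property (Theorem \ref{fano}).
\item Bigness is the real content and is not automatic. $-K_X$ is trivial on curves such as $\gamma_l$, and a boundary divisor could a priori sit in the null locus of $-K_X$.
\end{itemize}
The paper proves bigness as follows. Each boundary divisor $D$ is itself a spherical variety. Its effective cone is generated by the restrictions of the other boundary divisors together with one restricted color (e.g.\ $B_{j-1}|_{D_j^-}$). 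Using $-K=\sum_m c_mB_m+\sum_i(D_i^-+D_i^+)$ and linear relations among these restrictions, the paper writes $-K_X|_D$ with strictly positive coefficients on all generators of $\operatorname{Eff}(D)$. So $-K_X|_D$ lies in the interior of $\operatorname{Eff}(D)$, i.e.\ it is big.
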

To our knowledge, this yields very few known examples of spherical varieties exhibiting such vanishing properties with weak positivity on the anticanonical bundle (see \cite{FZ21} as well).

%\begin{theorem} The Kontsevich moduli space $\overline{M}_{0,0}(\mathbb P^2,2)$, parametrizing degree two stable maps from a nodal rational curve to $\mathbb P^2$, is isomorphic to the space of complete conics $Q(2)$ \cite[Section 0.4]{FP}.     Kontsevich moduli spaces Kontsevich moduli spaces are denoted by $\overline M_{g,n}(X,\beta)$ where $X$ is a projective scheme and $\beta\in H^2(X,\mathbb Z)$ is the homology class of a curve in $X$. A point in $\overline M_{g,n}(X,\beta)$ corresponds to a holomorphic map $\alpha$ from an $n$-pointed genus $g$ curve $C$ to $X$ such that $\alpha_*([C])=\beta$. When $X$ is a projective space or a Grassmannians the class $\beta$ is completely determined by its degree, similarly when $X$ is the product of two projective spaces we identify the class $\beta$ with its the bidegree. \end{theorem} 

\medskip

We now briefly describe the basic ideas for the proof. The construction is based on blowing up the relevant Grassmannians along specific coordinate planes in the ambient projective spaces of the Grassmannians' first canonical embeddings, which are determined by weight spaces under the torus action. We analyze their geometry via special coordinate charts in analogy with the canonical forms for congruence of matrices.  By observing a canonical basis, we determine the extremal rays of the nef cones. We establish the vanishing of the higher cohomology groups of the tangent bundles via the (semi-)positivity of the anticanonical bundles, combined with the explicit structure of the effective cones. We also use foundational results of Brion on spherical varieties (see, for instance, \cite{BPa,Br3,Br5}). %The, movable cones, Cox rings, and the connection 

%In special cases they are isomorphic to the ones used by Kausz, but we feel they are more general and convenient.
%we derive a natural flat map   $\mathcal P_{s,p,n}:\mathcal T_{s,p,n}\rightarrow\mathcal M_{s,p,n}$.  The fundamental work of Brion  on spherical varieties is another main technical tool used extensively in this paper. We determine the cone of effective divisors/curves of $\mathcal T_{s,p,n}$ and $\mathcal M_{s,p,n}$. Combined with the fibration structures, the holomorphic automorphism groups  follow. The invariants in Brion's theory can be computed conveniently in the  Mille Cr\^epes coordinates. By Kleiman's ampleness criterion, the (semi-)positivity of the anti-canonical bundles is a consequence of the calculation of the intersection numbers.   The organization of the paper is as follows. 

The organization of the paper is as follows. In \S \ref{iterated}, we fix some notations. In  \S \ref{vander}, we introduce the Mille Cr\^epes coordinate charts to parametrize $\mathcal {TL}_{n}$ and $\mathcal {TO}_{n}$. In \S \ref{foliationl}, we introduce the explicit Bia{\l}ynicki-Birula decomposition, and prove Proposition \ref{red2l}, and Theorems \ref{gwondl}, 
\ref{gwondo}, \ref{modulil}. In \S \ref{landmani}, we investigate the Landsberg--Manivel birational maps by proving Theorem \ref{g4kausz}. In \S \ref{kcpt}, we determine the effective and nef cones, compute the intersection numbers of the anticanonical divisors with curves in the Mori cone, and thereby prove  Theorems \ref{pen}, \ref{fano}, and \ref{bignef}.

\section{Preliminaries}\label{iterated}

We first introduce certain notation following \cite{FW}. %Denote by $\mathbb A^{m}$ the scheme ${\rm Spec}\,\mathbb Z\left[x_{1},\cdots,x_{m}\right]$, and by $\mathbb P^{m}$ the projective space ${\rm Proj}\,\mathbb Z\left[x_{0},\cdots,x_{m}\right]$. 
Define an index set
\begin{equation*}
\mathbb I_{n}:=\{(i_1,i_2,\cdots,i_n)\in\mathbb Z^n:1\leq i_1<i_2<\cdots<i_n\leq 2n\}.
\end{equation*}
For each $I=(i_1,i_2,\cdots,i_n)\in\mathbb I_{n}$, we define {\it Pl\"ucker coordinate functions} $P_{I}$ on $$\mathbb A^{n(2n)}:={\rm Spec}\mathbb Z\left[x_{ij}(1\leq i\leq n,1\leq j\leq 2n)\right]$$ as the subdeterminant of $(x_{ij})$ consisting of the $i_1^{th},\cdots,i_n^{th}$ columns.
For each point $x$ in the Grassmannian $G(n,2n)$, we denote by $\widetilde{x}$ an element in the preimage
\begin{equation*}%\label{gtor}
\pi^{-1}(x)\subset\mathcal G(n,2n):=\left\{\mathfrak p\in\mathbb A^{n(2n)}:P_{I}\notin\mathfrak p\,\,{\rm for\,\,a\,\,certain\,\,}I\in\mathbb I_{n}\right\}, 
\end{equation*} 
where  $\pi:\mathcal G(n,2n)\rightarrow G(n,2n)$ is the natural projection.

Choose a basis $\{e_1,\cdots,e_n\}$ for $V$ and let $\{e_1^*,\cdots,e_n^*\}$ denote its dual basis in $V^*$. 

For any integer $l$ with $0\leq l\leq n$, %, we may identify $\mathbf{Gr}(n,V\oplus V^*)$ with $G(n,2n)$. 
take the isotropic submodule $L_{12\cdots l}\in\mathbf{LG}(V \oplus V^*)$ generated by $e_{l+1},\cdots,e_n,e^*_{1},\cdots,e^*_{l}$. 
Viewed as a subscheme of $G(n, 2n)$, the Lagrangian Grassmannian $\mathbf{LG}(V \oplus V^*)$ has the following affine coordinate chart $U_{12\cdots l}\cong\mathbb A^{\frac{n(n+1)}{2}}$ around $L_{12\cdots l}$. 
\begin{equation*}%\label{ull}
U_{12\cdots l}:=\{\,\,\,\,\underbracedmatrixll{\,\,\,Z\\\,\,\,-X^T}{\,\,\,\,\,\,\,\,\,\,\,\,\,\,\,\,l\,\,\rm columns}
  \hspace{-.4in}\begin{matrix}
  &\hfill\tikzmark{a}\\
  &\hfill\tikzmark{b}  
  \end{matrix} \,\,\,\,\,
  \begin{matrix}
  0\\
I_{(n-l)\times(n-l)}\\
\end{matrix}\hspace{-.11in}
\begin{matrix}
  &\hfill\tikzmark{c}\\
  &\hfill\tikzmark{d}
  \end{matrix}\hspace{-.11in}\begin{matrix}
  &\hfill\tikzmark{g}\\
  &\hfill\tikzmark{h}
  \end{matrix}\,\,\,\,
\begin{matrix}
I_{l\times l}\\
0\\
\end{matrix}\hspace{-.11in}
\begin{matrix}
  &\hfill\tikzmark{e}\\
  &\hfill\tikzmark{f}\end{matrix}\hspace{-.3in}\underbracedmatrixrr{X\\W}{(n-l)\,\,\rm columns}\,\,\,\,\}
  \tikz[remember picture,overlay]   \draw[dashed,dash pattern={on 4pt off 2pt}] ([xshift=0.5\tabcolsep,yshift=7pt]a.north) -- ([xshift=0.5\tabcolsep,yshift=-2pt]b.south);\tikz[remember picture,overlay]   \draw[dashed,dash pattern={on 4pt off 2pt}] ([xshift=0.5\tabcolsep,yshift=7pt]c.north) -- ([xshift=0.5\tabcolsep,yshift=-2pt]d.south);\tikz[remember picture,overlay]   \draw[dashed,dash pattern={on 4pt off 2pt}] ([xshift=0.5\tabcolsep,yshift=7pt]e.north) -- ([xshift=0.5\tabcolsep,yshift=-2pt]f.south);\tikz[remember picture,overlay]   \draw[dashed,dash pattern={on 4pt off 2pt}] ([xshift=0.5\tabcolsep,yshift=7pt]g.north) -- ([xshift=0.5\tabcolsep,yshift=-2pt]h.south);
\end{equation*}
with coordinates
\begin{equation}\label{ulxl}
\begin{split}
&Z:=(\cdots,z_{ij},\cdots)_{1\leq i,j\leq l},\,\,X:=(\cdots,x_{ij},\cdots)_{1\leq i\leq l,l+1\leq j\leq n},\,\,W:=(\cdots,w_{ij},\cdots)_{l+1\leq i,j\leq n},
\end{split}   
\end{equation}
We adopt the convention $Z$ and $W$ are symmetric matrices with $z_{ij}=z_{ji}$ and $w_{ij}=w_{ji}$; $U_{0}$ is the affine coordinate chart for $l=0$. Similarly, for any $0\leq l\leq n$ and $i_1,\cdots,i_l,j_1,\cdots,j_{n-l}$ such that $\{i_1,\cdots,i_l,j_1,\cdots,j_{n-l}\}=\{1,2,\cdots,n\}$, we can construct an affine coordinate chart  $U_{i_1\cdots i_l}$ around $L_{12\cdots l}$, where $L_{i_1\cdots i_l}$ is  generated by $e_{j_1},\cdots,e_{j_{n-l}},e^*_{i_1},\cdots,e^*_{i_l}$. One may readily verify that all such coordinate charts cover $\mathbf{LG}(V \oplus V^*)$. 
%We remark that in the remainder of the paper  $U_l$, $0\leq l\leq r$, is always referred to (\ref{ull}).

We write  (\ref{lgpll}) as
$e_{\mathrm{LG}}:\mathrm{LG}(n,2n)\rightarrow\mathbb P^{N_{n}}$ where $\mathbb P^{N_{n}}$ has dimension $N_{n}:=\frac{(2n)!}{n!\cdot n!}-1$ and homogeneous coordinates $[\cdots ,z_I,\cdots]_{\,I\in\mathbb I_{n}}$. For $0\leq k\leq n$,
define index sets
\begin{equation*}%\label{ikk}
\mathbb I_{n}^{k}:=\big\{(i_1,\cdots,i_n)\in\mathbb Z^n:{ 1\leq i_1<\cdots<i_{n-k}\leq n\,;n+1\leq i_{n-k+1}<\cdots<i_n\leq 2n}\},
\end{equation*}
and linear subspaces
\begin{equation}\label{subl}
\{[\cdots ,z_I,\cdots]_{I\in\mathbb I_{n}}\in\mathbb {P}^{N_{n}}:z_I=0,\,\,\forall I\notin\mathbb I_{n}^k \}=:\mathbb {P}^{N^k_{n}}
\end{equation}
with dimension $|\mathbb I_{n}^{k}|-1$ and homogeneous coordinates $[\cdots ,z_I,\cdots]_{I\in\mathbb I^k_{n}}$.  Then (\ref{kspel}) takes the form 
\begin{equation}\label{fskl}
\begin{split}
\mathcal {KL}_{n}&=(e_{\mathrm{LG}}, f^0,\cdots,f^n):\mathrm{LG}(n,2n)\xdashrightarrow{\,\,\,\,\,\,\,\,\,\,\,\,\,\,\,}\mathbb {P}^{N_{n}}\times\mathbb {P}^{N^0_{n}}\times\cdots\times\mathbb {P}^{N^n_{n}}\\ 
&x\mapsto\big( [\cdots ,P_I(\widetilde x),\cdots]_{I\in\mathbb I_{n}},[\cdots,P_I(\widetilde x),\cdots]_{I\in\mathbb I^0_{n}},\cdots,[\cdots,P_I(\widetilde x),\cdots]_{I\in\mathbb I^n_{n}}\big),
\end{split}
\end{equation}
where $f^k:=F^k\circ e_{\mathrm{LG}}$ and $F^k:\mathbb {P}^{N_{n}}\dashrightarrow\mathbb {P}^{N^k_{n}}$ is the natural projection.

We give an alternative construction of $\mathcal {TL}_n$ as iterated blow-ups as follows. For each $0\leq k\leq n$, denote by $\mathcal S_k$ the ideal sheaf of $\mathcal O_{\mathrm{LG}(n,2n)}$ generated by $\{\left(e_{\mathrm{LG}}\right)^*z_{I}:{I\in \mathbb I_{n}^k}\}$ and by $S_k$ its vanishing locus. By the Laplace expansion of determinants, it is clear that
\begin{lemma}\label{sepal}
Fix integers $k,l$ with $0\leq k,l\leq n$. Consider the affine  coordinate chart $U_{12\cdots l}$ around $L_{12\cdots l}$. Then,  the restriction of $\mathcal S_k$ to $U_{12\cdots l}$ is generated by   
\begin{equation*}
   \left\{\begin{array}{ll}
    \{{\rm all\,\,}(l-k)\times(l-k)\,\,{\rm minors\,\,of\,\,}Z=(z_{ij})\},\,\,&k<l \\
    \{{\rm all\,\,}(k-l)\times(k-l)\,\,{\rm minors\,\,of\,\,}W=(w_{ij})\}, \,\,&k>l\\
       1,\,\,&k=l\\
   \end{array}\right.. 
\end{equation*}
\end{lemma}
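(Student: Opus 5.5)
The plan is to compute the Plücker coordinates of a point of $U_{12\cdots l}$ directly and read off which of them lie in $\mathbb I_n^k$. Write a point of $U_{12\cdots l}$ as the $n\times 2n$ matrix $M$ displayed above. Its first $n$ columns correspond to $e_1,\dots,e_n$ and its last $n$ columns to $e_1^*,\dots,e_n^*$. The first $n$ columns are therefore $\begin{pmatrix} Z & 0\\ -X^T & I_{n-l}\end{pmatrix}$ and the last $n$ columns are $\begin{pmatrix} I_l & X\\ 0 & W\end{pmatrix}$.

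An index $I\in\mathbb I_n^k$ selects $n-k$ columns among the first $n$ and $k$ columns among the last $n$. Hence $P_I(M)$ is the determinant of the $n\times n$ matrix formed from the chosen columns of these two blocks.

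Suppose first that $k<l$. For $I\in\mathbb I^k_n$, perform a Laplace expansion, or equivalently Gaussian elimination, using the identity blocks.
\begin{itemize}
\item The columns of $I_{n-l}$ (indices $l+1,\dots,n$ in the first half) together with the columns of $I_l$ (indices $n+1,\dots,n+l$) form a permutation matrix.
\item Take $I$ to contain all the first-half columns $l+1,\dots,n$, together with a set $A\subset\{1,\dots,l\}$ of first-half columns with $|A|=l-k$, and $k$ columns $n+b$ with $b\in B\subset\{1,\dots,l\}$. Expanding along the $I_{n-l}$ columns and then the chosen $I_l$ columns, $P_I=\pm\det Z_{C,A}$, where $C=\{1,\dots,l\}\setminus B$. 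This is an arbitrary $(l-k)\times(l-k)$ minor of $Z$.
\item For a general $I\in\mathbb I^k_n$, I would show that $P_I$ lies in the ideal generated by these minors. A column-degree count gives the reason: among the first $l$ rows, the columns that can have nonzero entries beyond the $Z$ part are the $X$-columns and the $I_l$-columns, and at most $k$ of these are chosen from the second half. By the generalized Laplace expansion along the first $l$ rows, each term then involves an $l\times l$ minor of the top-row block that uses at least $l-k$ columns of $Z$ and at most $k$ of the others. Expanding that minor along its $Z$-columns (Laplace again) gives a combination of $(l-k)$-minors of $Z$.
\end{itemize}
To make this cleaner, I would first row-reduce: $M$ is row-equivalent, by the invertible operation of adding multiples of the top rows to the bottom, to a matrix whose structure isolates $Z$. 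Row operations change all Plücker coordinates by a common unit, so the ideal is unchanged.

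The case $k>l$ is symmetric. Swap the roles of $V$ and $V^*$ (equivalently, apply the symplectic involution exchanging $e_i\leftrightarrow e_i^*$ up to sign) together with $l\leftrightarrow n-l$ and $k\leftrightarrow n-k$. Then $W$ plays the role of $Z$ and one obtains the $(k-l)$-minors of $W$. Alternatively, one repeats the argument directly, expanding along the bottom $n-l$ rows.

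The case $k=l$ is immediate: the index consisting of first-half columns $l+1,\dots,n$ and second-half columns $n+1,\dots,n+l$ lies in $\mathbb I^l_n$ and gives $P_I=\pm1$.

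The only nonroutine point is the containment $P_I\in(\text{minors})$ for every $I\in\mathbb I_n^k$, not just the special indices. I expect the Laplace expansion along the top $l$ rows, combined with the count of $Z$-columns versus columns not from $Z$, to handle it. Note that the $-X^T$ entries sit under the $Z$ columns and must be cleared by row operations first; since $Z$'s rows are the top rows, I would instead expand along the bottom rows to avoid needing this.
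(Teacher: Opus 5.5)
Your proposal is correct and follows the paper's route. The paper only says the lemma follows from the Laplace expansion of determinants, and your three ingredients supply exactly the details it leaves implicit: the explicit indices realizing every $(l-k)$-minor of $Z$; the generalized Laplace expansion along the top $l$ rows, where the $I_{n-l}$-columns vanish so every nonzero top minor uses at least $l-k$ columns of $Z$; and the $V\leftrightarrow V^*$ symmetry for $k>l$.

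The row-reduction aside is unnecessary and, as phrased, would not work. Adding multiples of the top rows to the bottom cannot clear $-X^T$ unless $Z$ is invertible. Nothing in your argument depends on it, because in the expansion along the top $l$ rows the $-X^T$ entries only enter the complementary bottom minors. If you want a normalization, use column operations within each half instead. These preserve every $\mathbb I^k_n$-span and reduce $M$ to $\left(\begin{smallmatrix} Z&0&I_l&0\\ 0&I_{n-l}&0&W\end{smallmatrix}\right)$.
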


%$S_k\subset G(p,n)$ by
%\begin{equation*}S_k:=\{ x\in {LG}(n,2n):\,P_{I}(\widetilde x)=0\,\,\forall{I\in \mathbb I_{n}^k}\}\,. \end{equation*}
For any permutation $\sigma$ of $\{0,1,\cdots,n\}$, let $g_0^{\sigma}:Y^{\sigma}_0\rightarrow \mathrm{LG}(n,2n)$ be the blow-up along $S_{\sigma(0)}$, and inductively define $g^{\sigma}_{i+1}$ as the blow-up along $(g^{\sigma}_0\circ g^{\sigma}_{1}\circ\cdots\circ g^{\sigma}_i)^{-1}(S_{\sigma(i+1)})$. All fit	into 
\vspace{-.08in}
\begin{equation}\label{sblow}
\small
\begin{tikzcd}
&Y^{\sigma}_n\ar{r}{g^{\sigma}_n}&Y^{\sigma}_{n-1}\ar{r}{g^{\sigma}_{n-1}}&\cdots\ar{r}{g^{\sigma}_1}&Y^{\sigma}_0\ar{r}{g^{\sigma}_0}&\mathrm{LG}(n,2n) \\
&&(g^{\sigma}_0\circ\cdots\circ g^{\sigma}_{n-1})^{-1}(S_{\sigma(n)})\ar[hook]{u}&\cdots&(g^{\sigma}_0)^{-1}(S_{\sigma(1)})\ar[hook]{u}&S_{\sigma(0)}\ar[hook]{u}\\
\end{tikzcd}.\vspace{-15pt}
\end{equation}
%It is clear that $Y_n^{\sigma}$ is independent of the choice of $\sigma$ by the following well-known result.
It is easy to verify that
\begin{lemma}\label{cpcl}
For any permutation $\sigma$, there is an isomorphism $\nu_{\sigma}:\mathcal {TL}_{n}\rightarrow Y^{\sigma}_n$ such that the following diagram commutes.
\vspace{-0.05in}
\begin{equation*}
\begin{tikzcd}
&\mathcal {TL}_n\arrow[dashed,swap]{rd}{\hspace{-0.03in}\mathcal ({KL}_{n})^{-1}} \arrow{rr}{\nu_{\sigma}}&&Y^{\sigma}_{n}\arrow{dl}{\hspace{-.03in}(g^{\sigma}_0\circ\cdots\circ g^{\sigma}_{n})} \\
&&\,\,\mathrm{LG}(n,2n)\,\,&\\
\end{tikzcd}\vspace{-20pt}\,.
\end{equation*}
In particular, there is a morphism $RL_{n}:\mathcal {TL}_{n}\rightarrow \mathrm{LG}(n,2n)$ extending $\mathcal ({KL}_{n})^{-1}$, which is also given by the projection of $\mathcal {TL}_{n}$ to the first factor $\mathbb {P}^{N_{n}}$ of the ambient space $\mathbb {P}^{N_{n}}\times\mathbb {P}^{N^0_{n}}\times\cdots\times\mathbb {P}^{N^n_{n}}$.
\end{lemma}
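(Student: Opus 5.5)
The plan is to use the standard fact that the closure of the graph of a rational map given by several linear systems is a blow-up. For a variety $X$ with ideal sheaves $\mathcal S_0,\dots,\mathcal S_n$, the closure of the graph of
\[
x\mapsto (x,[\text{generators of }\mathcal S_0],\dots,[\text{generators of }\mathcal S_n])
\]
is the blow-up of $X$ along the product ideal $\mathcal S_0\cdots\mathcal S_n$. Equivalently, it is the iterated blow-up along $\mathcal S_{\sigma(0)}$, then along the total transform of $\mathcal S_{\sigma(1)}$, and so on, for any ordering $\sigma$.

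The first step is to observe that $e_{\mathrm{LG}}$ is a closed embedding. Hence the first factor of $\mathcal{KL}_n$ is just the graph of the identity, and $\mathcal{TL}_n$ equals the closure of the graph of $(f^0,\dots,f^n)$ on $\mathrm{LG}(n,2n)$. Each $f^k$ is the rational map defined by the sections $(e_{\mathrm{LG}})^*z_I$, $I\in\mathbb I^k_n$, which generate $\mathcal S_k$. The map is defined on the open set where all $\mathcal S_k$ are the unit ideal; this set is dense, since it contains the big cell $U_0$ minus the loci where some minors of $W$ vanish, by Lemma \ref{sepal}.

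Next I will identify the graph closure with the blow-up. The graph closure of the single map $f^k$ is $\mathrm{Proj}\bigoplus_d\mathcal S_k^d=\mathrm{Bl}_{\mathcal S_k}$. For several maps, the closure inside $X\times\prod\mathbb P^{N^k_n}$ is the multi-Proj of the multi-Rees algebra $\bigoplus_{d_0,\dots,d_n}\mathcal S_0^{d_0}\cdots\mathcal S_n^{d_n}$. This is the main algebraic point. I will then invoke the universal property of blow-ups. On $Y^\sigma_n$ the pullback of each $\mathcal S_k$ is invertible: the pullback of an invertible ideal under a later blow-up stays invertible, and the $k$-th step makes the transform of $\mathcal S_{\sigma(k)}$ invertible. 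So the $f^k\circ(g^\sigma_0\circ\dots\circ g^\sigma_n)$ extend to morphisms, giving $Y^\sigma_n\to \mathcal{TL}_n$. This morphism is birational, and its image is the graph closure because $Y^\sigma_n$ is irreducible, being an iterated blow-up of an integral scheme along nowhere-dense centers.

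Conversely, on $\mathcal{TL}_n$ each ideal $\mathcal S_k\cdot\mathcal O$ is invertible, since it equals the pullback of $\mathcal O_{\mathbb P^{N^k_n}}(-1)$ twisted appropriately; this is the standard graph-closure argument. Applying the universal property of each successive blow-up in order $\sigma$ then gives $\mathcal{TL}_n\to Y^\sigma_n$ over $\mathrm{LG}(n,2n)$. The two maps are mutually inverse: they are inverse on a dense open set, the schemes are separated, and both are integral, since $\mathcal{TL}_n$ is a closure of an integral image with reduced structure and $Y^\sigma_n$ is integral. This gives $\nu_\sigma$, and commutativity holds by construction. The statement about $RL_n$ follows because the composition $\mathcal{TL}_n\to Y^\sigma_n\to\mathrm{LG}(n,2n)$ is the first projection, via $e_{\mathrm{LG}}$.

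The main obstacle is the integrality and reducedness bookkeeping over $\mathrm{Spec}\,\mathbb Z$. I need $Y^\sigma_n$ integral, and I need the scheme-theoretic closure to agree with the multi-Rees Proj. I would handle this by noting that blow-ups of integral schemes along nonzero ideals remain integral. That suffices here since no $\mathcal S_k$ vanishes identically on $\mathrm{LG}(n,2n)$, which can be checked on $U_0$ using Lemma \ref{sepal}.
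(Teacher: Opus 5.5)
Your proposal is correct. The paper gives no argument beyond ``it is easy to verify'', and yours is the standard verification that phrase alludes to. You show that each $\mathcal S_k$ becomes invertible on both integral schemes $Y^\sigma_n$ and $\mathcal{TL}_n$; on the latter this is because $s_Iw_J=s_Jw_I$ persists on the closure, so $\mathcal S_k\mathcal O_{\mathcal{TL}_n}$ is locally generated by a single nonzero $s_J$. The universal property of the successive blow-ups and the extension of the graph map then give mutually inverse morphisms over $\mathrm{LG}(n,2n)$.
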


Note that the subgroup ${\rm GL}(V^*)\times\mathbb G_m\cong{\rm GL}_n\times\mathbb G_m$ has a natural action on $\mathrm{LG}(n,2n)$ by right matrix multiplication, which can be uniquely extended to $\mathcal {TL}_{n}$. 
\medskip

%In this subsection, we turn to Orthogonal Grassmannians. %Choose a basis $e_{1},\cdots,e_{n}$ for $V$ and its dual basis $e^*_{1},\cdots,e^*_{n}$ for $V^*$. 
Analogous coordinate charts $U^{\prime}_{i_1\cdots i_l}$ exist for 
$\mathbf{OG}^+(V \oplus V^*)$. For instance,   we have
\begin{equation}\label{ulxo}%\label{ulo}
U^{\prime}_{12\cdots l}:=\{\left(\begin{matrix}
  Z\\
-X^T\\
\end{matrix}\right.
  \hspace{-.11in}\begin{matrix}
  &\hfill\tikzmark{a}\\
  &\hfill\tikzmark{b}  
  \end{matrix} \,\,\,\,\,
  \begin{matrix}
  0\\
I_{(n-l)\times(n-l)}\\
\end{matrix}\hspace{-.11in}
\begin{matrix}
  &\hfill\tikzmark{c}\\
  &\hfill\tikzmark{d}
  \end{matrix}\hspace{-.11in}\begin{matrix}
  &\hfill\tikzmark{g}\\
  &\hfill\tikzmark{h}
  \end{matrix}\,\,\,\,
\begin{matrix}
I_{l\times l}\\
0\\
\end{matrix}\hspace{-.11in}
\begin{matrix}
  &\hfill\tikzmark{e}\\
  &\hfill\tikzmark{f}\end{matrix}\hspace{0.11in}\left.\begin{matrix}
  X\\
  W\\
\end{matrix}\right)\}
  \tikz[remember picture,overlay]   \draw[dashed,dash pattern={on 4pt off 2pt}] ([xshift=0.5\tabcolsep,yshift=7pt]a.north) -- ([xshift=0.5\tabcolsep,yshift=-2pt]b.south);\tikz[remember picture,overlay]   \draw[dashed,dash pattern={on 4pt off 2pt}] ([xshift=0.5\tabcolsep,yshift=7pt]c.north) -- ([xshift=0.5\tabcolsep,yshift=-2pt]d.south);\tikz[remember picture,overlay]   \draw[dashed,dash pattern={on 4pt off 2pt}] ([xshift=0.5\tabcolsep,yshift=7pt]e.north) -- ([xshift=0.5\tabcolsep,yshift=-2pt]f.south);\tikz[remember picture,overlay]   \draw[dashed,dash pattern={on 4pt off 2pt}] ([xshift=0.5\tabcolsep,yshift=7pt]g.north) -- ([xshift=0.5\tabcolsep,yshift=-2pt]h.south);
\end{equation}
around $L_{12\cdots l}$, where $Z$, $W$ are anti-symmetric such that $z_{ii}=w_{ii}=0$, $z_{ij}=-z_{ji}$, and $w_{ij}=-w_{ji}$.  We adopt the convention that $U^{\prime}_{0}$ is the coordinate chart for $l=0$.

Each point in $U^{\prime}_{12\cdots l}$ can be written as $L_A = \{ \ell + A(\ell): \ell \in L_{12\cdots l} \}\in U^{\prime}_{12\cdots l}$,
where $A$ can be represented by a skew-symmetric matrix  $\left(\begin{matrix}
    Z&X\\-X^T&W
\end{matrix}\right)$. 

Recall that the Pfaffian of a skew-symmetric $2r\times 2r$ matrix $X=(x_{ij})$ is defined by
\begin{equation*}
\operatorname{Pf}(X):=\sum_{\sigma}\operatorname{sgn}(\sigma)\, x_{\sigma(1)\sigma(2)}\cdots x_{\sigma(2r-1)\sigma(2r)}.
\end{equation*}
Here the sum is over all permutations $\sigma$ of $\{1,2,\cdots,2r\}$ such that $\sigma(2m-1)<\sigma(2m)$ for $1\le m\le r$ and $\sigma(2m-1)<\sigma(2m+1)$ for $1\le m\le r-1$; ${\rm sgn}(\sigma)=1$ for $\sigma$ even and ${\rm sgn}(\sigma)=-1$ for $\sigma$ odd.
\begin{lemma}\label{pure}
The pure spinor $\phi_{L_A}$ associated with $L_A$ up to a scalar is given by
\begin{equation*}%\label{puref}
\sum_{k=0}^{[n/2]}\sum_{\substack{1\leq i_1<\cdots <i_t\leq l\\l+1\leq j_1<\cdots <j_s\leq n \\ s+t=2k}} \operatorname{Pf}(A_{i_1\cdots i_tj_1\cdots j_s}) e_{i_1}\wedge e_{i_2}\wedge\cdots\wedge e_{i_t}\wedge e^*_{j_1}\wedge\cdots e^*_{j_s} \wedge e^*_{1}\wedge e^*_2\wedge\cdots\wedge e^*_{l}.
\end{equation*}
Here $A_{i_1\cdots i_tj_1\cdots j_s}$ is the submatrix of $A$ consisting of the $i_1^{th}, \cdots,i_t^{th},j_1^{th},\cdots, j_s^{th}$ rows and columns. We make the convention that the Pfaffian is $1$ for the empty matrix without rows or columns.
\end{lemma}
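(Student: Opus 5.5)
The plan is to verify that the displayed element $\phi$ is annihilated by every vector of $L_A$ under the spinor action $\rho(v,\alpha)=l_\alpha+\iota_v$. Since $L_A$ is maximal isotropic, its annihilator in $\bigwedge^{\rm even}V^*$ (or $\bigwedge V^*$) is a line. Hence $\phi$ will then be the pure spinor up to scalar, provided we also check $\phi\neq 0$; this is clear because the term with $t=s=0$ is $e^*_1\wedge\cdots\wedge e^*_l$ with coefficient $1$. (Here I read the wedge factors $e_{i}$ as the contraction of $e^*_1\wedge\cdots\wedge e^*_l$ by the $e_i$, i.e. removing $e^*_{i}$ from the block, so that $\phi$ lies in $\bigwedge V^*$.)

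\textbf{Step 1: the base point.} First I would treat $A=0$. Here $L_{12\cdots l}$ is spanned by $e_{l+1},\dots,e_n,e^*_1,\dots,e^*_l$. The element $\phi_0=e^*_1\wedge\cdots\wedge e^*_l$ is killed by $l_{e^*_i}$ for $i\le l$ and by $\iota_{e_j}$ for $j>l$, so it is the pure spinor of $L_{12\cdots l}$.

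\textbf{Step 2: the exponential formula.} For general $A$, I would use the standard exponential description. The skew matrix $A$ determines an element $\omega_A$ of degree $2$ in the Clifford algebra, built from quadratic expressions in $e_i$ ($i\le l$) and $e^*_j$ ($j>l$), whose entries come from the blocks $Z,X,W$. The map $x\mapsto x+A(x)$ is then conjugation by $\exp(\omega_A)$ in the spin group, so that $L_A=\exp(\omega_A)\cdot L_{12\cdots l}\cdot\exp(-\omega_A)$. Consequently $\phi_{L_A}=\exp(\omega_A)\phi_0$.

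Concretely, $\omega_A$ acts on $\phi_0$ through operators $\iota_{e_i}\iota_{e_{i'}}$ (weighted by $Z$), $\iota_{e_i}l_{e^*_j}$ (weighted by $X$) and $l_{e^*_j}l_{e^*_{j'}}$ (weighted by $W$). These pairwise-commuting degree-two operators all act on the same Grassmann-like algebra. Expanding the exponential of such a quadratic element in anticommuting variables gives the classical Pfaffian generating identity
$$\exp\Big(\sum_{a<b}a_{ab}\,\xi_a\xi_b\Big)=\sum_{S}\operatorname{Pf}(A_S)\,\xi_S .$$
Here $\xi_a$ stands for $\iota_{e_a}$ when $a\le l$ and for $l_{e^*_a}$ when $a>l$, and $S$ runs over even subsets. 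Applying this to $\phi_0$ yields exactly the stated sum, with $t=|S\cap\{1..l\}|$ and $s=|S\cap\{l+1..n\}|$.

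\textbf{Alternative to Step 2.} Instead of exponentials, one can check directly that $(e_j+\sum_k A_{kj}\cdot)\phi=0$ for each generator. This reduces to the Pfaffian row-expansion
$$\operatorname{Pf}(A_{S\cup\{a\}\setminus\{b\}})\text{-type identity }\ \operatorname{Pf}(A_{S})=\sum_{b}\pm a_{ab}\operatorname{Pf}(A_{S\setminus\{a,b\}}).$$
This route works over $\mathbb Z$ without dividing by $2$, which matters since $\mathbf{OG}^+$ is taken over $\operatorname{Spec}\mathbb Z$.

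\textbf{Main obstacle.} The main difficulty is sign bookkeeping. One must match the orientation of $A$ (rows indexed by $e$ versus $e^*$, and the $-X^T$ block), the order of wedge factors, and the Pfaffian sign convention so that every sign comes out $+$. I would fix signs by checking the cases $n=2$ and $n=3$ with $l=0,1$, and then run the row-expansion induction uniformly.
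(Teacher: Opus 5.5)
Your proposal is correct. Its main route differs from the paper's: the paper's proof is exactly your ``Alternative to Step 2''. There the authors apply $\rho(e_\alpha+\iota_{e_\alpha}\omega)$, $l+1\le\alpha\le n$, to the displayed sum and split the output into two kinds of contribution. One is $\rho(\iota_{e_\alpha}\omega)$ acting on the top terms. The other, for each $k$, pairs $\rho(e_\alpha)$ on the terms with $s+t=2k+2$ with $\rho(\iota_{e_\alpha}\omega)$ on the terms with $s+t=2k$. Each contribution is killed by the Laplace-type expansion of $\operatorname{Pf}(A_{i_1\cdots i_tj_1\cdots j_{s+2}})$ along the index $\alpha$, and the generators attached to $e^*_\alpha$ are handled the same way.

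Your exponential argument replaces this with one structural fact. Since $L'=\langle e_1,\dots,e_l,e^*_{l+1},\dots,e^*_n\rangle$ is isotropic, the $\xi_a$ anticommute and square to zero in the Clifford algebra. Hence for $x\in L_{12\cdots l}$ one has $[\omega,x]\in L'$ and $[\omega,[\omega,x]]=0$, so $\exp(\omega)x\exp(-\omega)=x+[\omega,x]$ and $\exp(\omega)\phi_0$ is annihilated automatically. The Pfaffians then enter only through $\omega^k/k!=\sum_{|S|=2k}\operatorname{Pf}(A_S)\xi_S$ for $\omega=\sum_{a<b}A_{ab}\xi_a\xi_b$. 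This explains where the Pfaffians come from and concentrates all sign bookkeeping in one computation. It also works over $\mathbb Z$, since the divided powers are integral and the Clifford algebra is torsion free, so integrality is no reason to prefer the row-expansion route. The paper's route, in exchange, is completely elementary and uses no Clifford-group input.

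On the signs you flagged, take the dual basis $\eta_d$ of $L_{12\cdots l}$, with $\eta_d=e^*_d$ for $d\le l$ and $\eta_d=e_d$ for $d>l$. For $\omega=\sum_{a<b}c_{ab}\xi_a\xi_b$ one finds $[\omega,\eta_d]=\sum_a c_{ad}\xi_a$. So $c=A$ gives exactly the stated sum when $A$ acts by $\eta_d\mapsto\sum_a A_{ad}\xi_a$ (the column convention).

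If instead you read $L_A$ off the rows of the chart $U'_{12\cdots l}$, namely $\eta_d+\sum_a A_{da}\xi_a$, you need $c=A^T=-A$. The coefficients with $|S|=2k$ then become $(-1)^k\operatorname{Pf}(A_S)$. For example, with $n=2$ and $l=0$, the plane $\langle e_1+we^*_2,\,e_2-we^*_1\rangle$ is killed by $1-we^*_1\wedge e^*_2$, while $\rho(e_1+we^*_2)(1+we^*_1\wedge e^*_2)=2we^*_2$. So your small-case check will force the column reading of $A$ (equivalently, replacing $A$ by $-A$); it will not make every sign $+$ under the row reading. This is harmless for the later use in Lemma~\ref{sepao}, which only involves the ideals generated by sub-Pfaffians.
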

{\bf Proof of Lemma \ref{pure}.} Take an arbitrary $\alpha$ with $l+1\leq \alpha\leq n$. Write $\iota_{e_{\alpha}}\omega=\sum\nolimits_{i=1}^{l}x_{\alpha i} e_i + \sum\nolimits_{j=l+1}^{n} w_{\alpha j}e^*_j$. Computation yields that 
\begin{equation*}
\begin{split}
&\rho(e_{\alpha}+\iota_{e_{\alpha}}\omega)\phi_{L_A}=\rho(\iota_{e_{\alpha}}\omega)\sum_{\substack{1\leq i_1<\cdots <i_t\leq l\\l+1\leq j_1<\cdots <j_s\leq n \\ s+t=2[n/2]}} \operatorname{Pf}(A_{i_1\cdots i_tj_1\cdots j_s}) e_{i_1}\wedge\cdots\wedge e_{i_t}\wedge e^*_{j_1}\wedge\cdots\wedge e^*_{j_s} \wedge e^*_{1}\wedge\cdots\wedge e^*_{l}\\
&+\sum_{k=0}^{[n/2]-1}
\left\{\rho(e_{\alpha})\sum_{\substack{1\leq i_1<\cdots <i_t\leq l\\l+1\leq j_1<\cdots <j_s\leq n \\ s+t=2(k+1)}} \operatorname{Pf}(A_{i_1\cdots i_tj_1\cdots j_s}) e_{i_1}\wedge\cdots\wedge e_{i_t}\wedge e^*_{j_1}\wedge\cdots\wedge e^*_{j_s} \wedge e^*_{1}\wedge\cdots\wedge e^*_{l}\right.\\
&+\left.\rho(\iota_{e_{\alpha}}\omega)\sum_{\substack{1\leq i_1<\cdots <i_t\leq l\\l+1\leq j_1<\cdots <j_s\leq n \\ s+t=2k}} \operatorname{Pf}(A_{i_1\cdots i_tj_1\cdots j_s}) e_{i_1}\wedge\cdots\wedge e_{i_t}\wedge e^*_{j_1}\wedge\cdots\wedge e^*_{j_s} \wedge e^*_{1}\wedge\cdots\wedge e^*_{l}\right\}=:\Delta_1+\Delta_2.\\
\end{split}
\end{equation*}

Fix $1\leq i_1<\cdots <i_t\leq l$, $l+1\leq j_1<\cdots <j_{s+2}\leq n$ with $s+t=2k\leq 2[n/2]-2$. Without loss of generality,  assume $\alpha=j_{\lambda}$ with $l+1\leq\lambda\leq n$. The Laplace-type expansion yields that
\begin{equation*}
\begin{split}
&\operatorname{Pf}(A_{i_1\cdots i_tj_1\cdots j_{s+2}})=\sum\nolimits_{\beta=1}^{t}(-1)^{\beta+\lambda+t-1}x_{i_{\beta}j_{\lambda}} \operatorname{Pf}(A_{i_1\cdots\widehat{i_{\beta}}\cdots i_tj_1\cdots\widehat{j_{\lambda}}\cdots j_{s+2}})\\
&+\sum\nolimits_{\gamma=1}^{s+2}(-1)^{\gamma+\lambda-1}w_{j_{\gamma}j_{\lambda}}\operatorname{Pf}(A_{i_1\cdots i_tj_1\cdots \widehat{j_{\gamma}}\cdots\widehat{j_{\lambda}}\cdots j_{s+2}})+\sum\nolimits_{\gamma=1}^{s+2}(-1)^{\gamma+\lambda}w_{j_{\gamma}j_{\lambda}}\operatorname{Pf}(A_{i_1\cdots i_tj_1\cdots\widehat{j_{\lambda}}\cdots \widehat{j_{\gamma}}\cdots j_{s+2}}).     
\end{split}    
\end{equation*}
We conclude that $\Delta_2=0$. Similarly, we can show that $\Delta_1=0$.

The proof for $\rho(e^*_{\alpha}+\iota_{e^*_{\alpha}}\omega)\cdot\phi_{L_A}=0$ is the same. \,\,\,\,\,\endpf
\medskip

For $0\leq k\leq [n/2]$, we define index sets 
\begin{equation*}%\label{ikk}
\underline{\mathbb I}_{n}^{k}:=\big\{(i_1,\cdots,i_{2k})\in\mathbb Z^{2k}: 1\leq i_1<i_2<\cdots<i_{2k}\leq n\}.
\end{equation*}
We denote by $\underline{\mathcal S}_k$ the ideal sheaf of $\mathcal O_{\mathrm{OG}^+(n,2n)}$ generated by $\{\left(e_{\mathrm{OG}^+}\right)^*z_{I}:{I\in \underline{\mathbb I}_{n}^k}\}$, and let $\underline S_k$ be its vanishing locus.  Similarly, we can derive the following lemma.
\begin{lemma}\label{sepao}
For any integers $k,l$ such that $0\leq k\leq [n/2]$, $0\leq l\leq n$,  the restriction of $\underline{\mathcal S}_k$ to the affine coordinate chart $U^{\prime}_{12\cdots l}$ is generated by   
\begin{equation*}
   \left\{\begin{array}{ll}
    \{\operatorname{Pf}(Z_{i_1\cdots i_{l-2k}})\}_{1\leq i_1<\cdots <i_{l-2k}\leq l},\,\,&2k<l \\
    \{\operatorname{Pf}(W_{j_1\cdots j_{2k-l}})\}_{l+1\leq j_1<\cdots <j_{2k-l}\leq n}, \,\,&2k>l\\
       1,\,\,&2k=l\\
   \end{array}\right.. 
\end{equation*}
\end{lemma}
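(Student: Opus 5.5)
The plan is to read off the restriction of $\underline{\mathcal S}_k$ to $U^{\prime}_{12\cdots l}$ directly from Lemma \ref{pure}. There the pure spinor $\phi_{L_A}$ is written, up to a unit, in the basis of $\bigwedge^{\rm even}V^*$ given by monomials $e^*_{J}$, $J\subset\{1,\cdots,n\}$, $|J|$ even. The point is to identify which of these basis vectors are the coordinates $z_I$ with $I\in\underline{\mathbb I}_n^k$, i.e.\ which of them span $\bigwedge^{2k}V^*$.

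\textbf{Step 1: translate each term into a degree.} The term indexed by $(i_1,\cdots,i_t;j_1,\cdots,j_s)$ is $e_{i_1}\wedge\cdots\wedge e_{i_t}\wedge e^*_{j_1}\wedge\cdots\wedge e^*_{j_s}\wedge e^*_1\wedge\cdots\wedge e^*_l$. Here $e_{i_\beta}$ acts as contraction $\iota_{e_{i_\beta}}$ on $e^*_1\wedge\cdots\wedge e^*_l$ and removes $e^*_{i_\beta}$. So, up to sign, this term equals $e^*_{J}$ with
\[
J=\{1,\cdots,l\}\setminus\{i_1,\cdots,i_t\}\,\cup\,\{j_1,\cdots,j_s\},\qquad |J|=l-t+s .
\]
The correspondence $(i,j)\leftrightarrow J$ is a bijection between index data and even subsets of $\{1,\cdots,n\}$. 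Hence the spinor coordinate $z_J$ is $\pm\operatorname{Pf}(A_{i_1\cdots i_tj_1\cdots j_s})$, and $\underline{\mathcal S}_k$ on the chart is generated by those Pfaffians with $l-t+s=2k$.

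\textbf{Step 2: show the generators reduce to the claimed minimal ones.} There are three cases.

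\emph{Case $2k=l$.} Taking $t=s=0$ gives the generator $\operatorname{Pf}(\emptyset)=1$.

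\emph{Case $2k<l$.} The choice $s=0$, $t=l-2k$ gives exactly the Pfaffians $\operatorname{Pf}(Z_{i_1\cdots i_{l-2k}})$. We must show that every other generator, with $t=l-2k+s$ and $s>0$, lies in the ideal they generate. Use the Pfaffian Laplace expansion along the rows $j_1,\cdots,j_s$ (iterated, as in the proof of Lemma \ref{pure}). This writes $\operatorname{Pf}(A_{I\cup J'})$ as a combination of $\operatorname{Pf}(A_{I''\cup J''})$ with fewer $W$-indices. Alternatively, use the general expansion of a Pfaffian of a block matrix: $\operatorname{Pf}\begin{pmatrix}Z_I&X\\-X^T&W_J\end{pmatrix}$ is a sum over subsets $I'\subset I$ with $|I'|=|I|-|J|+2m$ of $\pm\operatorname{Pf}(Z_{I'})$ times polynomials in $X$ and $W$. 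Every $I'$ appearing has $|I'|\ge |I|-|J|=t-s=l-2k$. Finally, Pfaffians of size $\geq l-2k$ lie in the ideal of the size-$(l-2k)$ ones, again by Laplace expansion of a Pfaffian along a row, applied inductively.

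\emph{Case $2k>l$.} This is symmetric, taking $t=0$ and $s=2k-l$, and using the expansion with the roles of $Z$ and $W$ exchanged.

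\textbf{Main obstacle.} The main obstacle is the block-Pfaffian expansion in Step 2. The ideal-membership is easy once that formula is stated, but tracking which sub-Pfaffians of the $Z$-block (respectively $W$-block) occur requires care. I would first do the induction on $s$ using a single-row expansion along the last $W$-row $j_s$. That expansion writes $\operatorname{Pf}(A_{I,J})$ as
\[
\sum x\cdot\operatorname{Pf}(A_{I\setminus i,\,J\setminus j_s})\;+\;\sum w\cdot\operatorname{Pf}(A_{I,\,J\setminus\{j_\gamma,j_s\}}).
\]
Both kinds of terms have $t-s$ unchanged or increased, and $s$ decreased, so the induction closes down to the case $s=0$.
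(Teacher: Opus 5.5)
Your proposal is correct and follows the paper's route. The paper gives this lemma as the Pfaffian analogue of Lemma \ref{sepal}: it reads off the spinor coordinates on $U'_{12\cdots l}$ from Lemma \ref{pure} and reduces them by Laplace-type Pfaffian expansions, which is exactly what your Steps 1--2 and the single-row induction on $s$ (resp.\ $t$) spell out in detail, noting only that your bijection with even subsets $J$ uses that $l$ is even, implicit here since $L_{12\cdots l}\in\mathbf{OG}^+(V\oplus V^*)$ only for even $l$.
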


%For any permutation $\sigma$ of $\{0,1,\cdots,[\frac{n}{2}]\}$, let $g_0^{\sigma}:Y^{\sigma}_0\rightarrow \mathrm{OG}^+(n,2n)$ be the blow-up along $\underline{S}_{\sigma(0)}$, and inductively define $g^{\sigma}_{i+1}:Y^{\sigma}_{i+1}\rightarrow Y^{\sigma}_{i}$ as the blow-up along $(g^{\sigma}_0\circ\cdots\circ g^{\sigma}_i)^{-1}(\underline{S}_{\sigma(i+1)})$. Then $Y^{\sigma}_n\cong\mathcal {TO}_{n}$.

Similarly, the projection 
to the first factor of  $\mathbb {P}^{\underline N_{n}}\times\mathbb {P}^{\underline N^0_{n}}\times\cdots\times\mathbb {P}^{\underline N^{[n/2]}_{n}}$ induces a morphism $RO_{n}:\mathcal {TO}_{n}\rightarrow \mathrm{OG}^+(n,2n)$ that extends $\mathcal ({KO}_{n})^{-1}$ and factors as iterated blow-ups. Furthermore, there exists a unique equivariant ${\rm GL}_n\times\mathbb G_m$-action on $\mathcal {TO}_{n}$ compatible with (\ref{rspno}).

\section{Mille Cr\^epes Coordinates} \label{vander}
Following \cite{FW}, in this section, we will provide smooth atlases for $\mathcal {TL}_{n}$ and $\mathcal {TO}_{n}$ up to permutations of the basis. 

%We follow the method introduced in \cite{Fang,FW}, which is a generalization of the traditional one used in  \cite {Stu,Sev2,Van,Se2} by iteratively summing rank $1$ matrices. The core idea is to locally represent $\mathcal {TL}_{n}$ and $\mathcal {TO}_{n}$ as sequences of blow-ups of affine spaces along coordinate subspaces. The transition between successive blow-ups  is essentially the Gaussian elimination process. When the coordinate charts are constructed in reverse, all resulting matrices are summed up to consolidate the intermediate steps, which resembles stacking layers of paper-thin crepes and ganache on top of each other  (for this reason we call such coordinates {\it Mille Cr\^epes}).

\subsection{Coordinate charts for Lagrangian Grassmannians}\label{vanderl}
Fix an integer $l$ with $0\leq l\leq n$. Set $\mathbb {A}^{\frac{n(n+1)}{2}}:={\rm Spec}\,\mathbb Z[\overrightarrow A, X, \cdots,\overrightarrow B^{k},\cdots]$, where $X$ are the coordinates defined by (\ref{ulxl}), and
\begin{equation}\label{ulu}
\begin{split}
&\overrightarrow A:=\big((a_{n-l+i})_{1\leq i\leq l},(b_{i})_{l+1\leq i\leq n}\big),   \,\,\overrightarrow B^{k}:=\left\{\begin{array}{ll}
  (\xi_{j(l-k+1)})_{1\leq j\leq l-k} \,\,&{\rm for\,\,}1\leq k\leq l-1\\
   (\xi_{kj})_{k+1\leq j\leq n}\,\,&{\rm for\,\,}l+1\leq k\leq n-1 \\
   \end{array}\right..
\end{split}
\end{equation}

We define a morphism $\Gamma_l:\mathbb {A}^{\frac{n(n+1)}{2}}\rightarrow U_{12\cdots l}$ by
\begin{equation}\label{ws}
\left(
\begin{matrix}
\sum\limits_{k=1}^l\left(\prod\limits_{i=1}^{k}a_{n-l+i}\right)\cdot\Omega_k &0_{l\times(n-l)}&I_{l\times l}& X\\ -X^T&I_{(n-l)\times(n-l)}&0_{(n-l)\times l}&\sum\limits_{k=l+1}^{n}\left(\prod\limits_{i=l+1}^{k}b_{i}\right)\cdot\Omega_k\\
\end{matrix}\right)\,,    % \end{split}
\end{equation}
where $\Omega_k=\Xi_k^T\cdot\Xi_k$ and
\begin{equation}\label{xil}
\Xi_k:=\left\{\begin{array}{ll}
  (\xi_{1(l-k+1)},\xi_{2(l-k+1)},\cdots,\xi_{(l-k)(l-k+1)},\underset{\substack{\uparrow\\{(l-k+1)}^{\rm th}}}{1},\,\,\,0,\cdots,\underset{\substack{\uparrow\\{l}^{\rm th}}}{0}) \,\,&{\rm for\,\,}1\leq k\leq l\\(0,\cdots,0,\underset{\substack{\uparrow\\{(k-l)}^{\rm th}}}{1},\,\,\xi_{k(k+1)},\cdots,\underset{\substack{\uparrow\\{(n-l)}^{\rm th}}}{\xi_{kn}})\,\,&{\rm for\,\,}l+1\leq k\leq n \\
   \end{array}\right..   
\end{equation} 
Define a rational map   $J_l:\mathbb {A}^{\frac{n(n+1)}{2}}\dashrightarrow\mathbb {P}^{N_{n}}\times\mathbb {P}^{N^0_{n}}\times\cdots\times\mathbb {P}^{N^n_{n}}$ by $J_l:=\mathcal {KL}_{n}\circ \Gamma_l$.
Similarly to \cite[Lemma 3.2]{FW}, we can prove that $J_l$ is an embedding by (\ref{fskl}) and Lemma \ref{sepal}. Denoting by $\mathcal A_l$ the image of $\mathbb {A}^{\frac{n(n+1)}{2}}$, we call it
a {\it Type-I Mille Cr\^epes coordinate chart} of $\mathcal {TL}_n$.   
\begin{example}
Consider $LG(4,8)$ with $l=2$. Then	\begin{equation*}
\begin{split}
&\overrightarrow  A=(a_{3},a_{4},b_{3},b_{4}),\,\,X=\left(x_{13},x_{14},x_{23},x_{24}\right),\,\,\overrightarrow  B^1=(\xi_{12}),\,\,\,\,\overrightarrow  B^3=(\xi_{34}).\\
\end{split}
\end{equation*}
The morphism $\Gamma_2:\mathbb A^{10}\rightarrow U_{12}$ is given by 
\begin{equation*}
\left(
\begin{matrix}
a_{3}\cdot(\xi_{12}^2+a_{4})&a_{3}\cdot\xi_{12}&0&0&1&0&x_{13}&x_{14}\\
a_{3}\cdot\xi_{12}&a_{3}&0&0&0&1&x_{23}&x_{24}\\
-x_{13} &-x_{23} &1&0&0&0&b_{3}&b_{3}\cdot\xi_{34}\\
-x_{14}&-x_{24}&0&1&0&0&b_{3}\cdot\xi_{34}&b_{3}\cdot(\xi_{34}^2+b_{4})\\
	\end{matrix}
	\right).
	\end{equation*}
\end{example}

Before proceeding, we introduce another type of coordinate charts. For those who are only interested in the fields of characteristic not $2$, they may skip it. Define an index set 
\begin{equation*}
\mathbb J_l:=\left\{\left(i^+_1,\cdots,i^+_{\alpha},i^-_1,\cdots,i^-_{\beta}\right)\left|\,\,
\footnotesize\begin{matrix}
0\leq i^+_{1}-1<i^+_2-2<\cdots<i^+_{\alpha}-\alpha\leq l-1-\alpha\\
l\leq i^-_{1}-1<i^-_2-2<\cdots<i^-_{\beta}-\beta\leq n-1-\beta.
\end{matrix}\right.\right\}.
\end{equation*}
For each $\tau=(i^+_1,\cdots,i^+_{\alpha},i^-_1,\cdots,i^-_{\beta})\in\mathbb J_l$, define coordinates $X$ by (\ref{ulxo}), and
\begin{equation*}%\label{ulu}
\overrightarrow A:=\big((a_{n-l+i})_{1\leq i\leq l,\,\,i\neq i^+_1+1,i^+_2+1,\cdots,i^+_{\alpha}+1},(b_{i})_{l+1\leq k\leq n,\,\,i\neq i^-_1+1,i^-_2+1,\cdots,i^-_{\beta}+1}\big);
\end{equation*}
%\begin{equation}\label{ulu}\widetilde X:=\left(\begin{matrix}x_{1(s+l+1)}&\cdots &x_{1n}\\\vdots&\ddots&\vdots\\x_{l(s+l+1)}&\cdots &x_{ln}\\\end{matrix}\right)\,\,\,{\rm and}\,\,\,\widetilde Y:=\left(\begin{matrix}y_{(l+1)1}&\cdots& y_{(l+1)(s-p+l)}\\\vdots&\ddots&\vdots\\y_{p1}&\cdots& y_{p(s-p+l)}\\\end{matrix}  \right)  \,;\,\end{equation}
for $1\leq k\leq l-1$ and $k\neq i^+_1,i^+_1+1,\cdots,i^+_{\alpha},i^+_{\alpha}+1$, or $l+1\leq k\leq n-1$ and $k\neq i^-_1,i^-_1+1,\cdots,i^-_{\beta},i^-_{\beta}+1$, define $\overrightarrow B^{k}$ by (\ref{ulu});
for $k=i^+_{\gamma}$ with $1\leq\gamma\leq\alpha$,
\begin{equation}\label{k+}
\overrightarrow B^{k}:= \left(y_k,y_{k+1},(\xi_{j(l-k)})_{1\leq j\leq l-k-1},\,\,(\xi_{j(l-k+1)})_{1\leq j\leq l-k-1}\right),
\end{equation}
and for $k=i^-_{\gamma}$ with $1\leq\gamma\leq\beta$,
\begin{equation*}
\overrightarrow B^{k}:= \left(y_k,y_{k+1},(\xi_{kj})_{k+2\leq j\leq n},\,\,(\xi_{(k+1)j})_{k+2\leq j\leq n}\right).
\end{equation*}
Take the open subscheme
\begin{equation*}
\mathring{\mathbb {A}}:=\left\{\left.\mathfrak p\in {\rm Spec}\,\mathbb Z[\overrightarrow A, X, \cdots,\overrightarrow B^k,\cdots]\right|\,1-y_k\cdot y_{k+1}\notin\mathfrak p,\,\forall\,\,k\in\{i^+_1,\cdots,i^+_{\alpha},i^-_1,\cdots,i^-_{\beta}\}\right\}.
\end{equation*} 

Define a morphism $\Gamma_l^{\tau}:\mathring{\mathbb {A}}\rightarrow U_{12\cdots l}$ by
\begin{equation*}%\label{ws2}
\left(
\begin{matrix}
\sum\limits_{k=1}^l\left(\prod\limits_{\substack{1\leq i\leq k\\k\neq i^+_1+1,\cdots,i^+_{\alpha}+1}}a_{n-l+i}\right)\Omega_k &0_{l\times(n-l)}&I_{l\times l}& X\\ -X^T&I_{(n-l)\times(n-l)}&0_{(n-l)\times l}&\sum\limits_{k=l+1}^n\left(\prod\limits_{\substack{l+1\leq i\leq k\\k\neq i^-_1+1,\cdots,i^-_{\beta}+1}}b_i\right)\Omega_k\\
\end{matrix}\right)\,.   % \end{split}
\end{equation*}
Here for $k\in\{i^+_1+1,\cdots,i^+_{\alpha}+1,i^-_1+1,\cdots,i^-_{\beta}+1\}$, $\Omega_k$ is a zero matrix; for $1\leq k\leq l$ such that $k\neq i^+_1,i^+_1+1,\cdots,i^+_{\alpha},i^+_{\alpha}+1$, or $l+1\leq k\leq n$ such that $k\neq i^-_1,i^-_1+1,\cdots,i^-_{\beta},i^-_{\beta}+1$, $\Omega_k=\Xi_k^T\cdot\Xi_k$ where $\Xi_k$ is given by (\ref{xil}); for $k=i^+_{\gamma}$ with $1\leq\gamma\leq\alpha$,
\begin{equation}\label{2om+}
\Omega_k:=\left(\begin{matrix}
\Xi_k^T\cdot \left(\begin{matrix}
y_k&1\\1&y_{k+1}
\end{matrix}\right)^{-1}\cdot\Xi_k&\Xi_k^T&0\\
\Xi_k&\left(\begin{matrix}
y_k&1\\1&y_{k+1}
\end{matrix}\right)&0\\
0&0&0_{(k-1)\times(k-1)}\\
\end{matrix}\right)
\end{equation}
with 
$\Xi_k:=\left(\begin{matrix}
\xi_{1(l-k)}&\xi_{2(l-k)}&\cdots&\xi_{(l-k-1)(l-k)}\\
\xi_{1(l-k+1)}&\xi_{2(l-k+1)}&\cdots&\xi_{(l-k-1)(l-k+1)}\\
\end{matrix}\right)$;  
for $k=i^-_{\gamma}$ with $1\leq\gamma\leq\beta$,
\begin{equation*}%\label{2om-}
\Omega_k:=\left(\begin{matrix}
0_{(k-l-1)\times(k-l-1)}&0&0\\
0&\left(\begin{matrix}
y_k&1\\1&y_{k+1}
\end{matrix}\right)&\Xi_k\\
0&\Xi_k^T&\Xi_k^T\cdot \left(\begin{matrix}
y_k&1\\1&y_{k+1}
\end{matrix}\right)^{-1}\cdot\Xi_k
\end{matrix}\right)
\end{equation*}
with
$\Xi_k:=\left(\begin{matrix}
\xi_{k(k+2)}&\xi_{k(k+3)}&\cdots&\xi_{kn}\\
\xi_{(k+1)(k+2)}&\xi_{(k+1)(k+3)}&\cdots&\xi_{(k+1)n}
\end{matrix}\right)$.  
Denoting by $\mathcal A_l^{\tau}$ the image of $\mathring{\mathbb {A}}$ under the embedding $J_l^{\tau}:=\mathcal {KL}_{n}\circ \Gamma_l^{\tau}$, we call it a {\it Type-II Mille Cr\^epes coordinate chart} of $\mathcal {TL}_n$. 

\begin{example}
Consider $LG(4,8)$ with $l=0$, and $\tau=\left(1\right)\in\mathbb J_2$. Then	\begin{equation*}
\begin{split}
&\overrightarrow  A=(b_{1},b_{3},b_{4}),\,\,\overrightarrow  B^1=(y_1,y_2,\xi_{13},\xi_{14},\xi_{23},\xi_{24}),\,\,\,\,\,\,\overrightarrow  B^3=(\xi_{34}).\\
\end{split}
\end{equation*}
The map $\Gamma_0^{\tau}:\mathring{\mathbb A}\rightarrow U_{0}$ is given by 
\begin{equation*}
\footnotesize
\left(
\begin{matrix}
1&0&0&0&b_1y_1&b_1&\xi_{13}&\xi_{14}\\
0&1&0&0&b_1&b_1y_2&\xi_{23}&\xi_{24}\\
0&0&1&0&b_1\xi_{13}&b_1\xi_{23}&b_1\left(\frac{2\xi_{13}\xi_{23}-y_2\xi_{13}^2-y_1\xi_{23}^2}{1-y_1y_2}+b_3\right)&b_1\left(\frac{\xi_{13}\xi_{24}+\xi_{14}\xi_{23}-y_2\xi_{14}\xi_{13}-y_1\xi_{24}\xi_{23}}{1-y_1y_2}+b_3\xi_{34}\right)\\
0&0&0&1&b_1\xi_{14}&b_1\xi_{24}&b_1\left(\frac{\xi_{13}\xi_{24}+\xi_{14}\xi_{23}-y_2\xi_{14}\xi_{13}-y_1\xi_{24}\xi_{23}}{1-y_1y_2}+b_3\xi_{34}\right)&b_1\left(\frac{2\xi_{14}\xi_{24}-y_2\xi_{14}^2-y_1\xi_{24}^2}{1-y_1y_2}+b_3\left(\xi_{34}^2+b_4\right)\right)\\
	\end{matrix}
	\right).
	\end{equation*}
\end{example}

\begin{proposition}\label{tspnsmoothl}
$\mathcal {TL}_{n}$ is  smooth over ${\rm Spec}\,\mathbb Z$.   
\end{proposition}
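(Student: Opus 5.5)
Since each Mille Cr\^epes chart is, by construction, the image of an open subscheme of an affine space over ${\rm Spec}\,\mathbb Z$ under the embeddings $J_l$ (resp.\ $J_l^{\tau}$), the proof reduces to showing that the charts $\mathcal A_l$, $\mathcal A_l^{\tau}$, together with their translates under permutations of the basis $\{e_1,\dots,e_n\}$, are open subschemes of $\mathcal {TL}_n$ and cover it. Each such chart is isomorphic to $\mathbb A^{\frac{n(n+1)}{2}}$ or to the open subscheme $\mathring{\mathbb A}$, and is therefore smooth over $\mathbb Z$. The plan has three steps.

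\textbf{Step 1: each $J_l$ and $J_l^{\tau}$ is an open immersion.} Following \cite[Lemma 3.2]{FW}, I will show that $J_l$ is a locally closed embedding whose image is open in $\mathcal {TL}_n$. Using Lemma \ref{sepal}, I will compute on $\Gamma_l(\mathbb A)$ the pullbacks of the generators of each ideal $\mathcal S_k$, namely the minors of $Z$ and of $W$. The layered shape of (\ref{ws}) makes the $(l-k)\times(l-k)$ minors of $Z=\sum_k(\prod a)\,\Omega_k$ generate a principal ideal $(\prod_{i\le k'} a_{n-l+i})$, with one minor equal to that monomial times a unit; the same holds for $W$ with the $b_i$. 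Then, by Lemma \ref{cpcl}, $\mathcal {TL}_n$ is the iterated blow-up $Y_n^\sigma$, and in the corresponding blow-up chart (where a distinguished generator is invertible) the exceptional divisors are cut out by the monomials $a_{n-l+i}$ and $b_i$. The coordinates $\xi_{\ast}$ and $X$ are recovered as ratios of Pl\"ucker coordinates of the factors $\mathbb P^{N^k_n}$. This gives an inverse morphism from an open subset $\{z_{I_k}\neq0\}$ of $\mathcal {TL}_n$ onto $\mathbb A^{\frac{n(n+1)}{2}}$, so $J_l$ is an isomorphism onto an open subscheme. For Type-II charts the same argument applies, with a $2\times 2$ block $\begin{pmatrix}y_k&1\\1&y_{k+1}\end{pmatrix}$ replacing a rank-one layer; the condition $1-y_ky_{k+1}$ being a unit guarantees that this block is invertible.

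\textbf{Step 2: the charts cover $\mathcal {TL}_n$.} Since $\mathcal {TL}_n$ is proper over $\mathbb Z$, it suffices to check geometric points over every algebraically closed field $\mathbb K$. The charts $U_{i_1\cdots i_l}$ cover $\mathrm{LG}(n,2n)$, so a point of $\mathcal {TL}_n$ lies over some $L_A\in U_{12\cdots l}$, given by a symmetric pair $(Z,W)$ together with limit data in the factors $\mathbb P^{N^k_n}$. I will proceed by induction on the layers, which amounts to a congruence normal form. If some diagonal entry of the current residual symmetric matrix can be made nonzero after a permutation, a rank-one layer $\Xi_k^T\Xi_k$ is split off by Gaussian elimination; this corresponds to a Type-I step. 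Otherwise, for instance in characteristic $2$, where an alternating symmetric matrix has all diagonal entries zero, some off-diagonal entry is nonzero, and a rank-two hyperbolic block is split off; this corresponds to a Type-II step. Applying this procedure to the limit points, i.e.\ to the data in the exceptional divisors, one layer at a time, places every point in some chart.

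\textbf{Main obstacle.} I expect Step 2 at boundary points to be the main obstacle. There one must verify that the limiting layers recorded in $\prod_k\mathbb P^{N^k_n}$ really admit the required normal form, and in particular that in characteristic $2$ the Type-II charts with $1-y_ky_{k+1}$ invertible suffice. I would try to handle this by using the description of $\mathcal {TL}_n$ as the iterated blow-up from Lemma \ref{cpcl}. At each blow-up step the fiber over a point is a projectivized normal space, which is itself a space of symmetric matrices of the appropriate corank. I would then apply the Step 2 normal-form argument to a nonzero symmetric matrix in that fiber, choosing the chart according to whether it has a nonzero diagonal entry.
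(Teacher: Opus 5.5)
Your proposal is correct and follows essentially the paper's route. The paper also restricts the iterated blow-up of Lemma \ref{cpcl} to $U_{12\cdots l}$, taken in the order $S_l,S_{l-1},\dots,S_0,S_{l+1},\dots,S_n$, uses Lemma \ref{sepal} to see that each successive center is locally cut out by the entries of a residual (Schur-complement) symmetric matrix, and covers each resulting ${\rm Bl}_{\{0\}}$ by a chart with an invertible diagonal entry (Type I) or with $v_{12}$ and $v_{12}^2-v_{11}v_{22}$ invertible (Type II), inducting layer by layer exactly as in your ``main obstacle'' paragraph. One small correction to Step 1: by (\ref{te}) the principal generator of the $(l-k)\times(l-k)$ minors of $Z$ on $\mathcal A_l$ is $\prod_{i=1}^{l-k}a_{n-l+i}^{\,l-k+1-i}$, not the plain product of the $a_{n-l+i}$, though only its principality matters for the argument.
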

{\noindent\bf Proof of Proposition \ref{tspnsmoothl}.}  
%We may assume that $2p\leq n\neq 2s$. 
It suffices to show that for any $0\leq l\leq n$,  $(RL_n)^{-1}(U_{12\cdots l})$ is covered by the union of $\mathcal A_l$ and $\mathcal A_l^{\tau}$ with $\tau\in\mathbb J_l$, up to permutations of $\{e_1,e_2,\cdots,e_n\}$.

Without loss of generality, we assume that $1\leq l\leq n-1$. Restrict $(\ref{sblow})$ to $U_{12\cdots l}$ with $\sigma$ defined by $\sigma(k)=l-k$ for $0\leq k\leq l$ and $\sigma(k)=k$ for $l+1\leq k\leq n$. Computation yields that $\mathcal S_{\sigma(0)}|_{U_{12\cdots l}}=\mathcal O_{U_{12\cdots l}}$ and $(g^{\sigma}_0)^*(\mathcal S_{\sigma(1)})|_{U_{12\cdots l}}$ is generated by $\{z_{ij}\}_{1\leq i\leq j\leq l}$. Hence $(g^{\sigma}_0\circ g^{\sigma}_{1})^{-1}(U_{12\cdots l})$ is isomorphic to ${\rm Spec\,}\mathbb Z[X,W]\times{\rm Bl}_{\{0\}}({\rm Spec\,}\mathbb Z[Z])$; we may view ${\rm Bl}_{\{0\}}({\rm Spec\,}\mathbb Z[Z])$ as a closed subscheme of ${\rm Spec\,}\mathbb Z[Z]\times {\rm Proj\,}\mathbb Z[V]$ defined by $z_{ij}v_{kl}=z_{kl}v_{ij}$, where $V:=(v_{ij})_{1\leq i\leq j\leq l}$. 

It is clear that the open subscheme of $(g^{\sigma}_0\circ g^{\sigma}_{1})^{-1}(U_{12\cdots l})$ defined by $u_{11}\neq 0$ can be parametrized by the following coordinate chart. Let $\mathbb {A}^{\frac{n(n+1)}{2}}:={\rm Spec}\,\mathbb Z[a_{n-l+1}, X, W, \overrightarrow B^{1},\overrightarrow B^*_1]$, where $X$, $W$ are defined by (\ref{ulxo}), $a_{n-l+1}$, $\overrightarrow B^1$ are defined by (\ref{ulu}), and $\overrightarrow B^{*}_1:=(u_{ij})_{1\leq i,j\leq l-1}$
with the convention $u_{ij}=u_{ji}$. Define a morphism $\Gamma_{l,1}:\mathbb A^{\frac{n(n+1)}{2}}\rightarrow U_{12\cdots l}$ by
\begin{equation}\label{wsc}
\left(
\begin{matrix}
a_{n-l+1}\left(\Omega_1+C_1^*\right) &0_{l\times(n-l)}&I_{l\times l}& X\\ -X^T&I_{(n-l)\times(n-l)}&0_{(n-l)\times l}&W\\
\end{matrix}\right)\,,    % \end{split}
\end{equation}
where $\Omega_1$ is defined by (\ref{xil}) and $C^*_1$ is defined by
\begin{equation}\label{w7}
C^*_1:=\left(\begin{matrix}
u_{11}&\cdots& u_{1(l-1)}&0\\
\vdots&\ddots&\vdots&\vdots\\
u_{(l-1)1}&\cdots& u_{(l-1)(l-1)}&0\\
0&\cdots&0&0\\
\end{matrix}\right).\\
\end{equation}
Setting $J_{l,1}:=\left(e,f^{l},f^{l-1}\right)\circ \Gamma_{l,1}$ (see (\ref{fskl}) for $f^k$), we obtain an embedding $J_{l,1}:\mathbb A^{\frac{n(n+1)}{2}}\rightarrow\mathbb {P}^{N_{n}}
\times\mathbb {P}^{N^l_{n}}\times\mathbb {P}^{N^{l-1}_{n}}$. We get the desired chart.  

Now consider the affine open subscheme of $(g^{\sigma}_0\circ g^{\sigma}_{1})^{-1}(U_{12\cdots l})$ defined by $v_{12}\neq 0$. It suffices to consider the open subscheme 
$\left\{\left.\mathfrak p\in (g^{\sigma}_0\circ g^{\sigma}_{1})^{-1}(U_{12\cdots l})\right|\,v_{12}^2-v_{11}v_{22}\notin\mathfrak p\right\}$
with the following parametrization. Define $\overrightarrow B^{*}_1:=(u_{ij})_{1\leq i,j\leq l-2}$
with $u_{ij}=u_{ji}$. Let $X$, $W$ be as in (\ref{ulxo}), and $a_{n-l+1}$, $\overrightarrow B^1$ as in (\ref{k+}). Take 
\begin{equation*}
\mathring{\mathbb A}_1:=\left\{\left.\mathfrak p\in
{\rm Spec}\,\mathbb Z[a_{n-l+1}, X, W, \overrightarrow B^{1},\overrightarrow B^*_1]\right|\,1-y_1y_2\notin\mathfrak p\right\}.
\end{equation*}
Define a morphism $\mathring\Gamma_{l}:\mathring{\mathbb A}_1\rightarrow U_{12\cdots l}$ by (\ref{wsc}), where $\Omega_1$ is given by (\ref{2om+}) and $C^*_1$ is defined as (\ref{w7}) by setting $u_{i(l-1)}=0$. Setting $\mathring J_{l}:=\left(e,f^{l},f^{l-1}\right)\circ\mathring \Gamma_{l}$, we get an embedding $\mathring J_{l}:\mathring{\mathbb A}_1\rightarrow\mathbb {P}^{N_{n}}
\times\mathbb {P}^{N^l_{n}}\times\mathbb {P}^{N^{l-1}_{n}}$. We get the desired chart.  

Proposition \ref{tspnsmoothl} can be proved by induction via Lemma \ref{sepal}. \,\,\,\,$\endpf$

\subsection{Coordinate charts for orthogonal Grassmannians}\label{vandero}
For any even integer $l$ with $0\leq l\leq n$, set $\mathbb {A}^{\frac{n(n-1)}{2}}:={\rm Spec}\,\mathbb Z[\overrightarrow A, X, \overrightarrow B^1,\cdots,\overrightarrow B^{\frac{l}{2}-1},\overrightarrow B^{\frac{l}{2}+1},\cdots,\overrightarrow B^{[\frac{n}{2}]}]$, where $X$ is defined by (\ref{ulxo}), $\overrightarrow A:=\big((a_{[\frac{n}{2}]-\frac{l}{2}+i})_{1\leq i\leq \frac{l}{2}},(b_{i})_{\frac{l}{2}+1\leq i\leq [\frac{n}{2}]}\big)$, and
\begin{equation*}
\overrightarrow B^{k}:=\left\{\begin{array}{ll}
  \left((\xi_{j(l-2k+1)})_{1\leq j\leq l-2k},\,\,(\xi_{j(l-2k+2)})_{1\leq j\leq l-2k}\right) \,\,&{\rm for\,\,}1\leq k\leq \frac{l}{2}-1\\
   \left((\xi_{(2k-1)j})_{2k+1\leq j\leq n},\,\,(\xi_{(2k)j})_{2k+1\leq j\leq n}\right)\,\,&{\rm for\,\,}\frac{l}{2}+1\leq k\leq [\frac{n}{2}] \\
   \end{array}\right..    
\end{equation*}

We define a morphism $\underline\Gamma_l:\mathbb {A}^{\frac{n(n-1)}{2}}\rightarrow U^{\prime}_{12\cdots l}$ by
\begin{equation*}%\label{ws2}
\left(
\begin{matrix}
\sum\limits_{k=1}^{\frac{l}{2}}\left(\prod\limits_{i=1}^ka_{[\frac{n}{2}]-\frac{l}{2}+i}\right)\underline\Omega_k &0_{l\times(n-l)}&I_{l\times l}& X\\ -X^T&I_{(n-l)\times(n-l)}&0_{(n-l)\times l}&\sum\limits_{k=\frac{l}{2}+1}^{[\frac{n}{2}]}\left(\prod\limits_{i=\frac{l}{2}+1}^kb_i\right)\underline\Omega_k\\
\end{matrix}\right)\,.  % \end{split}
\end{equation*}
Here for $1\leq k\leq \frac{l}{2}$,
\begin{equation*}%\label{2omo1}
\underline\Omega_k:=\left(\begin{matrix}
\underline\Xi_k^T\cdot \left(\begin{matrix}
0&1\\-1&0
\end{matrix}\right)\cdot\underline\Xi_k&-\underline\Xi_k^T&0\\
\underline\Xi_k&\left(\begin{matrix}
0&1\\-1&0
\end{matrix}\right)&0\\
0&0&0_{(2k-2)\times(2k-2)}\\
\end{matrix}\right)
\end{equation*}
with $\underline\Xi_k:=\left(\begin{matrix}
\xi_{(l-2k+1)1}&\xi_{(l-2k+1)2}&\cdots&\xi_{(l-2k+1)(l-2k)}\\
\xi_{(l-2k+2)1}&\xi_{(l-2k+2)2}&\cdots&\xi_{(l-2k+2)(l-2k)}\\
\end{matrix}\right)$; for $\frac{l}{2}+1\leq k\leq [\frac{n}{2}]$,
\begin{equation*}%\label{2omo2}
\underline\Omega_k:=\left(\begin{matrix}
0_{(2k-l-2)\times(2k-l-2)}&0&0\\
0&\left(\begin{matrix}
0&1\\-1&0
\end{matrix}\right)&\underline\Xi_k\\
0&-\underline\Xi_k^T&\underline\Xi_k^T\cdot \left(\begin{matrix}
0&1\\-1&0
\end{matrix}\right)\cdot\underline\Xi_k
\end{matrix}\right)
\end{equation*}
with $\underline\Xi_k:=\left(\begin{matrix}
\xi_{(2k-1)(2k+1)}&\cdots&\xi_{(2k-1)n}\\
\xi_{(2k)(2k+1)}&\cdots&\xi_{(2k)n}\\
\end{matrix}\right)$.
One can show that $\underline J_l:=\mathcal {KL}_{n}\circ \underline \Gamma_l$ is an embedding, whose image we denote by $\underline {\mathcal A}_l$.  We call $\left(\underline {\mathcal A}_l,(\underline J_l)^{-1}\right)$  a {\it Mille Cr\^epes coordinate chart} of $\mathcal {TO}_n$.   

\begin{example}
Consider $\mathrm{OG}^+(4,8)$ with $l=0$. The morphism $\underline\Gamma_0:\mathbb A^{10}\rightarrow U^{\prime}_{0}$ is given by 
\begin{equation*}
\left(
\begin{matrix}
1&0&0&0&0&b_1&b_1\xi_{13}&b_1\xi_{14}\\
0&1&0&0&-b_1&0&b_1\xi_{23}&b_1\xi_{24}\\
0&0&1&0&-b_1\xi_{13}&-b_1\xi_{23}&0&b_1\left(\xi_{13}\xi_{24}-\xi_{14}\xi_{23}+b_{3}\right)\\
0&0&0&1&-b_1\xi_{14}&-b_1\xi_{24}&b_1\left(\xi_{14}\xi_{23}-\xi_{13}\xi_{24}-b_{3}\right)&0\\
\end{matrix}\right).
\end{equation*}
\end{example}

Similarly to the proof of  Proposition \ref{tspnsmoothl}, we can conclude by Lemmas \ref{pure} and \ref{sepao} that
\begin{proposition}\label{tspnsmootho}
$\mathcal {TO}_{n}$ is  smooth over ${\rm Spec}\,\mathbb Z$.   
\end{proposition}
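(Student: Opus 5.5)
We mimic the proof of Proposition \ref{tspnsmoothl}. Smoothness over ${\rm Spec}\,\mathbb Z$ is local, and $\mathrm{OG}^+(n,2n)$ is covered by the charts $U^{\prime}_{i_1\cdots i_l}$ with $l$ even; up to permuting $\{e_1,\dots,e_n\}$ these reduce to $U^{\prime}_{12\cdots l}$. So it suffices to show that $(RO_n)^{-1}(U^{\prime}_{12\cdots l})$ is covered by images of affine spaces $\mathbb A^{\frac{n(n-1)}{2}}$ under embeddings of the form $\underline J_l$ (and intermediate-stage analogues), up to permutations. Unlike the Lagrangian case, no Type-II charts are needed: a skew form always admits a hyperbolic $2\times 2$ pivot $\left(\begin{smallmatrix}0&1\\-1&0\end{smallmatrix}\right)$, even in characteristic $2$.

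First I would record that $\mathcal{TO}_n$ is the iterated blow-up of $\mathrm{OG}^+(n,2n)$ along the loci $\underline S_k$, in any order, as in Lemma \ref{cpcl}. On $U^{\prime}_{12\cdots l}$ I choose the order $\sigma(j)=\frac l2-j$ for $0\le j\le \frac l2$, followed by the indices $k>\frac l2$ in increasing order. By Lemma \ref{sepao}, $\underline{\mathcal S}_{l/2}$ is trivial on this chart. The ideals for $k<\frac l2$ are the Pfaffian ideals of $Z$ of size $l-2k$, and those for $k>\frac l2$ are the Pfaffian ideals of $W$ of size $2k-l$. The $Z$ and $W$ blocks are independent variables, so the problem splits as ${\rm Spec}\,\mathbb Z[X]$ times two copies of the same local problem: iteratively blowing up the affine space of skew $m\times m$ matrices along the Pfaffian loci, from smallest rank to largest.

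The core step is an induction on $m$. Blowing up the origin of skew $m\times m$ matrices gives an exceptional divisor $\mathbb P(\bigwedge^2)$. On the affine patch where the $(1,2)$-coordinate (after permutation) is the unit, write $Z=a\,(\underline\Omega_1+C^*)$. Here $\underline\Omega_1$ is built from $\underline\Xi_1$ and the hyperbolic block, and $C^*$ is a skew $(m-2)\times(m-2)$ matrix supported on the complementary indices. This is a congruence by the unipotent matrix $\left(\begin{smallmatrix}I&0\\ \underline\Xi^T J^{-1}&I\end{smallmatrix}\right)$, i.e.\ Gaussian elimination against the $2\times 2$ pivot. Under it, the $2r\times2r$ Pfaffian ideal of $Z$ pulls back to $a^{r}$ times the $(2r-2)\times(2r-2)$ Pfaffian ideal of $C^*$. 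I would verify this with the Laplace-type expansion of Pfaffians used in the proof of Lemma \ref{pure}, together with the invariance $\operatorname{Pf}(g^TAg)=\det(g)\operatorname{Pf}(A)$ on each principal minor system. Since the exceptional divisor is Cartier and principal, blowing up these pulled-back ideals equals blowing up the Pfaffian ideals of $C^*$. Induction then produces exactly the nested sums $\sum_k(\prod a_i)\underline\Omega_k$ in $\underline\Gamma_l$. Finally I would check, as in \cite[Lemma 3.2]{FW}, that the composition $\mathcal{KO}_n\circ\underline\Gamma_l$ is an immersion onto an open subset: the coordinates $a_i,\xi_{ij}$ are recovered as ratios of spinor coordinates via Lemma \ref{pure}.

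The main obstacle is the Pfaffian pullback identity over $\mathbb Z$, in the form
\begin{equation*}
\underline{\mathcal S}\text{-ideal of }a(\underline\Omega_1+C^*)=a^{r}\cdot\mathrm{Pf}_{2r-2}(C^*).
\end{equation*}
This identity is what makes the charts cover everything, including in characteristic $2$, where skew-symmetric should mean alternating, i.e.\ with zero diagonal. I would prove it by the congruence argument above rather than by direct expansion. It also remains to check that the patches where a hyperbolic pivot exists cover the exceptional divisor. This holds because an alternating matrix that is nonzero has some nonzero off-diagonal entry, which I move into position $(1,2)$ by a permutation.
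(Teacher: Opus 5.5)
Your proposal is correct and follows essentially the paper's route. The paper proves this by repeating the argument of Proposition \ref{tspnsmoothl} on the charts $U^{\prime}_{12\cdots l}$ ($l$ even), with the same ordering of blow-ups and an induction via Lemmas \ref{pure} and \ref{sepao}, using only the single chart type $\underline{\mathcal A}_l$ (no Type-II charts), exactly as you observe. Your congruence argument, which reduces the $2r$-Pfaffian ideal of $a(\underline\Omega_1+C^*)$ to $a^r$ times the $(2r-2)$-Pfaffian ideal of $C^*$, makes explicit the inductive step the paper leaves to the reader; it is most cleanly justified by the minor-summation formula $\operatorname{Pf}((gAg^T)_I)=\sum_{J}\det(g_{I,J})\operatorname{Pf}(A_J)$ rather than by $\operatorname{Pf}(g^TAg)=\det(g)\operatorname{Pf}(A)$ alone.
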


\section{Geometry via the Bia{\l}ynicki-Birula decomposition}\label{foliation}
In this section, combined with the Mille Cr\^epes coordinate charts, we 
investigate the geometry of $\mathcal {TL}_{n}$, $\mathcal {TO}_{n}$, $\mathcal {ML}_{n}$ and $\mathcal {MO}_{n}$ using the Bia{\l}ynicki-Birula decomposition.% (see \cite{Bi} for details). 

%Exploiting such idea in the Mille Cr\^epes coordinate charts, we  in this section. %, and then study dynamic behavior of the $\mathbb C^*$-actions $\psi_{s,p,n}$ and $\Psi_{s,p,n}$ on $G(p,n)$ and on $\mathcal T_{s,p,n}$. 

\subsection{Decomposition of  \texorpdfstring{$\mathcal {TL}_{n}$}{hh} and \texorpdfstring{$\mathcal {TO}_{n}$}{hh} }\label{foliationl}
The connected components of the fixed point scheme of $\mathrm{LG}(n,2n)$ under the $\mathbb G_m$-action (\ref{ltl}) are
\begin{equation*}%\label{vani}
\begin{split}
&\mathcal V_{(n-k,k)} :=\left\{\left. \left(
\begin{matrix}
0&X\\
Y&0\\
\end{matrix}\right)\in \mathrm{LG}(n,2n)\right\vert_{}\footnotesize\begin{matrix}
X\,\,{\rm is\,\,an\,\,}k\times n\,\,{\rm matrix\,\,of\,\,rank}\,\,k\,\\
Y\,\,{\rm is\,\,a\,\,}(n-k)\times n\,\,{\rm matrix\,\,of\,\,rank}\,\,(n-k)\\
\end{matrix}
\right\}\,,\,\,\,\,0\leq k\leq n,\\
\end{split}
\end{equation*}
and the stable, unstable subschemes are, respectively,
\begin{equation}\label{vani}
\begin{split}
&\mathcal V_{(n-k,k)}^+:= \left\{\left.\left(
\begin{matrix}
0&X\\
Y&W\\
\end{matrix}\right)\in \mathrm{LG}(n,2n)\right\vert_{}\footnotesize\begin{matrix}
X\,\,{\rm is\,\,an\,\,}k\times n\,\,{\rm matrix \,\,of\,\,rank\,\,}k\,\\
Y\,\,{\rm is\,\,a\,\,}(n-k)\times n\,\,{\rm matrix \,\,of\,\,rank\,\,}(n-k)\,\\
\end{matrix}\right\},\\
&\mathcal V_{(n-k,k)}^-:= \left\{\left.\left(
\begin{matrix}
Z&X\\
Y&0\\
\end{matrix}\right)\in \mathrm{LG}(n,2n)\right\vert_{}{\footnotesize\begin{matrix}
X\,\,{\rm is\,\,an\,\,}k\times n\,\,{\rm matrix \,\,of\,\,rank\,\,}k\,\\
Y\,\,{\rm is\,\,a\,\,}(n-k)\times n\,\,{\rm matrix \,\,of\,\,rank\,\,}(n-k)\,\\
\end{matrix}}\right\}\,,
\end{split}
\end{equation}
for $0\leq k\leq n$. One can conclude by Lemma \ref{sepal} that for $0\leq k\leq n$, the vanishing locus  $S_k$ is the union of $\overline{\mathcal V_{(n-k-1,k+1)}^+}$ and  $\overline{\mathcal V_{(n-k+1,k-1)}^-}$, where $\mathcal V_{(-1,n+1)}^+=\mathcal V_{(n+1,-1)}^-=\emptyset$ by convention.   
%\begin{definition}We call $\mathcal V_{(n,0)}$ the source and $\mathcal V_{(0,n)}$ the sink of $\mathrm{LG}(n,2n)$ under the action (\ref{ltl}).\end{definition}

Since $\mathbb G_m={\rm Spec}\,\mathbb Z[t,t^{-1}]$ in (\ref{ltl}) takes the form $b_{l+1}\mapsto t b_{l+1},\,\,a_{n-l+1}\mapsto t^{-1}a_{n-l+1}$ in any Mille Cr\^epes coordinate chart, the connected components of the fixed point scheme of $\mathcal {TL}_{n}$ are 
\begin{equation*}%\label{11l}
\mathcal D_{(n-k,k)}:=(RL_{n})^{-1}(\mathcal V_{(n-k,k)}), \,\,0\leq k\leq n,
\end{equation*}
with the stable, unstable subschemes given by
\begin{equation*}%\label{tschubertl}
\mathcal D^{\pm}_{(n-k,k)}:=(RL_{n})^{-1}(\mathcal V^{\pm}_{(n-k,k)}), \,\,0\leq k\leq n.
\end{equation*}
It is clear that $\mathcal D_{(n,0)}$ and $\mathcal D_{(0,n)}$ are smooth divisors of $\mathcal {TL}_{n}$, and  $\mathcal D_{(n-k,k)}$ are codimension two smooth closed subschemes for $1\leq k\leq n-1$. We define divisors 
\begin{equation*}%\label{boundaryl}
D_{k}^-:=\overline{\mathcal D_{(n-k+1,k-1)}^-},\,\,D_{k}^+:=\overline{\mathcal D_{(k-1,n-k+1)}^+},\,\,1\leq k\leq n.    
\end{equation*}
 
%See Figure (\ref{G2}) for illustration.

%Computation yields that
\begin{lemma}\label{snc}
The above divisors are smooth, prime, and have simple normal crossingss. %In particular,  $D_1^-=\mathcal D_{(p,0)}$,  $D_1^+=\mathcal D_{(p-r,r)}$.
%\begin{equation}\label{rr}   D_{1}^++D_{2}^++\cdots+D_{r}^++D_{1}^-+D_{2}^-+\cdots+D_{r}^-\,.\end{equation}
\end{lemma}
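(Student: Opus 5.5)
The plan is to argue locally in the Mille Cr\^epes charts of \S\ref{vander}. By Proposition \ref{tspnsmoothl} and its proof, $\mathcal{TL}_n$ is covered, up to permutations of $\{e_1,\dots,e_n\}$, by the Type-I charts $\mathcal A_l$ and the Type-II charts $\mathcal A_l^\tau$. Since smoothness, primality and simple normal crossings are local (primality aside), it suffices to identify each $D_k^\pm$ in every chart with a coordinate hyperplane. We then check that distinct divisors correspond to distinct coordinate functions.

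\textbf{Step 1: the Type-I chart $\mathcal A_l$.} Recall that the $\mathbb G_m$-action (\ref{ltl}) acts by $a_{n-l+i}\mapsto t^{-1}a_{n-l+i}$ and $b_{l+1}\mapsto t\, b_{l+1}$; more precisely, only $a_{n-l+1}$ and $b_{l+1}$ carry nonzero weight, since the products $\prod a$ and $\prod b$ in (\ref{ws}) each scale uniformly.

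From (\ref{ws}) one reads off the limits. The limit $t\to 0$ of a point kills the $W$-block exactly when $b_{l+1}\to 0$. Moreover, the rank of the $W$-block of the limiting point (through Lemma \ref{sepal}, which records which $\mathcal S_k$ vanish) is the number of leading nonzero factors in $b_{l+1},b_{l+1}b_{l+2},\dots$. Hence I expect $\overline{\mathcal D^+_{(\cdot,\cdot)}}$ to be cut out by single coordinates, namely $D^+_{k}\cap\mathcal A_l=\{b_{i}=0\}$ for the appropriate $i$ (with $k=n+1-i$ after matching indices). Symmetrically, $D^-_k\cap\mathcal A_l=\{a_{n-l+k'}=0\}$ for the matching $k'$.

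To justify this, I compare with the blow-up description of Lemma \ref{cpcl}. The total transform of $S_k$ is, chartwise, a monomial in the $a$'s and $b$'s: the $(l-k)$-minors of $Z=\sum_k(\prod_{i\le k}a_{n-l+i})\Omega_k$ generate the principal ideal $(\prod_{i\le l-k}a_{n-l+i})$ times a unit ideal, because $\Xi_k$ is unitriangular. The exceptional divisor of the blow-up of $S_k$ is then the new factor $a_{n-l+(l-k)}$ (respectively $b$), and its image under $RL_n$ lies in the closure of the relevant $\mathcal V^\pm$. This also shows that each $D_k^\pm$ is the strict transform of a single exceptional divisor, hence irreducible. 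The divisors not meeting $\mathcal A_l$ are simply absent there.

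\textbf{Step 2: Type-II charts.} In $\mathcal A_l^\tau$ the missing $a$'s and $b$'s are replaced by the $2\times2$ blocks $\begin{pmatrix}y_k&1\\1&y_{k+1}\end{pmatrix}$. These are invertible because $1-y_ky_{k+1}$ is a unit on $\mathring{\mathbb A}$. So the corresponding minors are units times monomials in the remaining $a,b$, and again each $D_k^\pm$ meeting the chart is a distinct coordinate hyperplane.

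\textbf{Step 3: conclusion.} A union of distinct coordinate hyperplanes in an affine space over $\mathbb Z$ is snc, and each hyperplane is smooth. Primality follows from irreducibility of the generic fiber (the strict-transform argument in Step 1, or density of the open $\mathrm{GL}_n$-orbit) together with flatness over $\mathbb Z$.

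The main obstacle is the index bookkeeping in Step 1: matching each coordinate $a_{n-l+i}$ or $b_i$ precisely with the right $D_k^\pm$ via the BB limits. I would handle this by computing the limit point $\lim_{t\to0}$ (respectively $t\to\infty$) in the chart, and reading off its $\mathcal V_{(n-k,k)}$ from the block ranks.
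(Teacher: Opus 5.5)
Your approach is the paper's. Cover $\mathcal{TL}_n$, up to permutations, by the Type-I and Type-II Mille Cr\^epes charts, and show that each $D^\pm_k$ meeting a chart is a coordinate hyperplane, with distinct divisors giving distinct coordinates. The paper's proof says exactly this, tersely.

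The one thing to fix is your tentative matching in Step 1, which is reversed.

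\textbf{Why it is reversed.} Since $a_{n-l+1}\mapsto t^{-1}a_{n-l+1}$ and $b_{l+1}\mapsto t\,b_{l+1}$, the $W$-block is scaled by $t$ and vanishes in the limit $t\to 0$ for every point. Meanwhile $Z$ is scaled by $t^{-1}$, so the fixed component reached by the $t\to0$ limit is decided by $\operatorname{rank}Z$, not by the $b$'s. Concretely:
\begin{itemize}
\item $\dim(L\cap V^*)=l-\operatorname{rank}Z$ on $U_{12\cdots l}$, so $\mathcal V^+_{(n-m,m)}\cap U_{12\cdots l}=\{\operatorname{rank}Z=l-m\}$;
\item $\mathcal V^-_{(n-m,m)}\cap U_{12\cdots l}=\{\operatorname{rank}W=m-l\}$.
\end{itemize}

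\textbf{Correct matching.} With no index reversal:
\begin{itemize}
\item $D^+_k\cap\mathcal A_l=\{a_k=0\}$ for $n-l+1\le k\le n$;
\item $D^-_k\cap\mathcal A_l=\{b_k=0\}$ for $l+1\le k\le n$;
\item all other intersections are empty.
\end{itemize}
For example, $\mathcal A_0$ meets every $D^-_k$ and no $D^+_k$. Your assignment $D^+_{n+1-i}=\{b_i=0\}$ would give the opposite.

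Your own blow-up computation already points the right way. For $k<l$, $\mathcal S_k|_{U_{12\cdots l}}$ is generated by minors of $Z$ and cuts out $\overline{\mathcal V^+_{(n-k-1,k+1)}}$. So the new factor $a_{n-k}$ you identify there cuts out $D^+_{n-k}$, not a $D^-$.

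A minor correction: that ideal is generated by $\prod_{i=1}^{l-k}a_{n-l+i}^{\,l-k+1-i}$, cf.\ (\ref{te}), not by the product without exponents. It is still monomial, so nothing breaks.

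Since only the labels change, your smoothness, snc and primality conclusions stand.
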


{\noindent\bf Proof of  Lemma \ref{snc}.} 
According to the proof of Proposition \ref{tspnsmoothl}, $\mathcal {TL}_{n}$ is covered by the Type-I and Type-II Mille Cr\^epes coordinate charts up to group actions.

Let $\mathcal A_l={\rm Spec}\,\mathbb Z[\overrightarrow A, X,\cdots,\overrightarrow B^{k},\cdots]$ be any Type-I coordinate chart. Then, $D_k^+\bigcap \mathcal A_l$ (resp. $D_k^-\bigcap \mathcal A_l$) is non-empty if and only if $n-l+1\leq k\leq n$ (resp. $l+1\leq k\leq n$), in which case it equals $\left\{a_{k}=0\right\}$ (resp. $\left\{b_{k}=0\right\}$).
Similar results hold for Type-II coordinate charts. \,\,\,$\endpf$
\medskip

Next, we construct the flat morphism $\mathcal {PL}_{n}:\mathcal {TL}_{n}\rightarrow D^-_1$. Let
$\mathfrak {P}:\mathcal {TL}_{n}\rightarrow \mathbb {P}^{N^0_{n}}\times\cdots\times\mathbb {P}^{N^n_{n}}$ be the restriction of  (\ref{nprojl}). Similarly to the proof of \cite[Lemmas 4.5 and 4.6]{FW}, it is clear that
\begin{lemma}\label{embl}
$\mathfrak P$ induces an embedding of $D^-_1$ into $\mathbb {P}^{N^0_{n}}\times\cdots\times\mathbb {P}^{N^n_{n}}$. The image of $\mathcal {TL}_{n}$ under $\mathfrak P$ is equal to the image of $D^-_1$ under $\mathfrak P$.
\end{lemma}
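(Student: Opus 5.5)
The statement has two parts. First, $\mathfrak P|_{D^-_1}$ is a closed embedding. Second, $\mathfrak P(\mathcal {TL}_n)=\mathfrak P(D^-_1)$. I would prove both in the Mille Cr\^epes coordinates, following \cite[Lemmas 4.5, 4.6]{FW}. The key observation is that $\mathfrak P$ only sees the factors $f^k=F^k\circ e_{\mathrm{LG}}$. Each $F^k$ projects onto a single weight space $\bigwedge^{n-k}V\otimes\bigwedge^kV^*$ of the $\mathbb G_m$-action (\ref{ltl}), on which $t$ acts by the scalar $t^k$. So every factor $f^k$ is $\mathbb G_m$-invariant as a map to projective space, and $\mathfrak P$ is constant on $\mathbb G_m$-orbits of $\mathcal {TL}_n$.

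\textbf{Equality of images.} Take a point $x\in\mathcal {TL}_n$. Since $\mathcal {TL}_n$ is projective, the limit $\lim_{t\to\infty}t\cdot x$ (or $t\to 0$, whichever direction corresponds to the unstable flow) exists. By the Bia{\l}ynicki-Birula description above, this limit lies in some fixed component $\mathcal D_{(n-k,k)}$, and $x$ lies in $\mathcal D^-_{(n-k,k)}$. Invariance then gives $\mathfrak P(x)=\mathfrak P(\lim t\cdot x)$. This alone does not land the point in $D^-_1$, however. I would instead work chart by chart. In a Type-I chart $\mathcal A_l$, the $\mathbb G_m$-action is $b_{l+1}\mapsto tb_{l+1}$, $a_{n-l+1}\mapsto t^{-1}a_{n-l+1}$, and $D^-_1$ corresponds, in the chart $l=0$, to $\{b_1=0\}$.

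\begin{itemize}
\item Compute $f^k\circ\Gamma_l$ explicitly. Each $f^k$ is a tuple of minors of the symmetric blocks $\sum\prod a\,\Omega_k$ and $\sum\prod b\,\Omega_k$, as in Lemma \ref{sepal}.
\item After dividing each $f^k$ by its common monomial factor in the $a$'s and $b$'s, check that the resulting expression is independent of $b_{l+1}$ and $a_{n-l+1}$. The Pl\"ucker coordinates in $\mathbb I^k_n$ are exactly homogeneous of a single $\mathbb G_m$-weight, and the extra $a_{n-l+1}$ or $b_{l+1}$ factors out of every coordinate in that weight space, so this should hold.
\item Conclude that $\mathfrak P|_{\mathcal A_l}$ factors through the projection $\mathcal A_l\to\{a_{n-l+1}=0\}$ or the projection to $\{b_{l+1}=0\}$. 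Within each chart, these coordinate hyperplanes are $D^+_1$ or $D^-_1$, respectively.
\end{itemize}

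Together with the flat retraction structure, where $D^+_1$ maps isomorphically by the same formulas, this gives $\mathfrak P(\mathcal {TL}_n)\subset\mathfrak P(D^-_1)$. The reverse inclusion is trivial.

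\textbf{Embedding.} On $D^-_1\cap\mathcal A_l$ the remaining coordinates are $\overrightarrow A\setminus\{b_{l+1}\}$, $X$, and the $\overrightarrow B^k$. I would show these can be recovered as regular functions of ratios of the $f^k$ coordinates; this is the analogue of the embedding $J_l$, with the first factor $e_{\mathrm{LG}}$ dropped. The procedure is as follows.
\begin{itemize}
\item The ratios of $f^{l}$ and $f^{l\pm1}$ recover $X$ and the first layer $\Xi$.
\item Successive quotients $f^{k+1}/f^k$, normalized at the coordinate carrying the leading $1$ in $\Xi_k$, recover $a_i$ or $b_i$ and then $\xi$, layer by layer, exactly as in Gaussian elimination.
\end{itemize}
This shows that $\mathfrak P$ restricted to each chart of $D^-_1$ is a closed immersion onto an open part of the image. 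Injectivity across charts and properness then give a closed embedding, since $D^-_1$ is projective and the chart-wise inverses glue. Type-II charts are handled the same way, using the $2\times2$ blocks $\begin{pmatrix}y_k&1\\1&y_{k+1}\end{pmatrix}$.

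\textbf{Main obstacle.} I expect the main difficulty in the embedding part. There one must verify that the first factor $e_{\mathrm{LG}}$ is genuinely unnecessary on $D^-_1$, i.e.\ that the coordinate $X$ and the top-layer data are determined by the $f^k$ alone, especially in the Type-II charts in characteristic $2$. I would first try the explicit formulas for $f^l$ and $f^{l-1}$ on $\Gamma_l$ with $b_{l+1}=0$, then induct on layers using Lemma \ref{sepal}.
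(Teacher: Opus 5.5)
\textbf{There is a genuine gap in your proof that $\mathfrak P(\mathcal{TL}_n)=\mathfrak P(D^-_1)$.} For $1\le l\le n-1$ the chart $\mathcal A_l$ contains both $a_{n-l+1}$ and $b_{l+1}$. $\mathbb G_m$-invariance only forces $\mathfrak P|_{\mathcal A_l}$ to depend on them through the invariant product $a_{n-l+1}b_{l+1}$. It does not make $\mathfrak P|_{\mathcal A_l}$ independent of each one separately. Already for $n=2$, $l=1$ one has
\[
\Gamma_1=\begin{pmatrix}a_2&0&1&x_{12}\\-x_{12}&1&0&b_2\end{pmatrix},\qquad f^1=[\,x_{12}:a_2b_2+x_{12}^2:-1:-x_{12}\,].
\]
This expression is already normalized, since one coordinate is $-1$. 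It separates the points $(a_2,b_2,x_{12})=(1,1,0)$ and $(0,1,0)$, so $\mathfrak P|_{\mathcal A_1}$ does not factor through $\{a_2=0\}$ or through $\{b_2=0\}$.

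Moreover, by Lemma \ref{snc}, in $\mathcal A_l$ the hyperplanes $\{b_{l+1}=0\}$ and $\{a_{n-l+1}=0\}$ are $D^-_{l+1}$ and $D^+_{n-l+1}$. They are not $D^\mp_1$: $D^-_1$ meets only the $l=0$ charts, and $D^+_1$ only the $l=n$ charts. Finally, you cannot invoke the flat retraction to pass from $D^+_1$ to $D^-_1$. The map $\mathcal{PL}_n$ is defined as $(\check{\mathfrak P})^{-1}\circ\mathfrak P$ using this very lemma, so that step is circular.

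\textbf{The missing step is a density argument, and it only needs $\mathcal A_0$.} There $\Gamma_0=(I_n\ \ W)$ with $W=b_1(\Omega_1+b_2\Omega_2+b_2b_3\Omega_3+\cdots)$. The coordinates of $f^k$ are, up to sign, the $k\times k$ minors of $W$, so the factor $b_1^k$ cancels and $\mathfrak P|_{\mathcal A_0}$ does not involve $b_1$. Equivalently, your orbit-limit idea does work for $x\in\mathcal A_0$: $\lim_{t\to 0}t\cdot x$ has $b_1=0$, so it lies in $D^-_1$. Hence $\mathfrak P(\mathcal A_0)\subseteq\mathfrak P(D^-_1)$. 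Since $\mathcal A_0$ is a dense open subset of the irreducible $\mathcal{TL}_n$, and $\mathfrak P(D^-_1)$ is closed because $D^-_1$ is proper, you get $\mathfrak P(\mathcal{TL}_n)\subseteq\overline{\mathfrak P(\mathcal A_0)}\subseteq\mathfrak P(D^-_1)$.

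\textbf{The embedding part rests on the same misidentification.} $\mathcal V^-_{(n,0)}$ is the single point $[V]$, so $RL_n$ maps $D^-_1$ to $[V]$. Thus $D^-_1$ lies over $U_0$, and no $X$-coordinates occur on it. Because $RL_n$ is the first projection and $D^-_1$ is reduced, $D^-_1$ is a closed subscheme of $\{[V]\}\times\prod_k\mathbb P^{N^k_{n}}$. This product projects isomorphically onto $\prod_k\mathbb P^{N^k_{n}}$, so $\mathfrak P|_{D^-_1}$ is a closed immersion at once.

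Your ``main obstacle'' therefore disappears: $e_{\mathrm{LG}}$ is constant on $D^-_1$. Your layer-by-layer recovery of $b_2,\dots,b_n,\xi_{kj}$ from ratios of minors on $\{b_1=0\}\subset\mathcal A_0$ is correct, but it is unnecessary.
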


Define a morphism  $\mathcal {PL}_{n}:\mathcal {TL}_{n}\rightarrow D^-_1$ by
$\mathcal {PL}_{n}:=(\check {\mathfrak P})^{-1}\circ \mathfrak P$, where $\check {\mathfrak P}:D^-_1\rightarrow \mathfrak P\left(D^-_1\right)=\mathcal {ML}_{n}$ is the induced isomorphism.  One can show that $\mathcal {PL}_{n}$ is a retraction, and induces an isomorphism from $D_1^+$ to $D_1^-$.  Similarly to \cite[Lemma 4.8]{FW}, we can derive that $\mathcal {PL}_{n}$ is flat.

Recall the following realization of 
the spaces of complete quadrics.
\begin{example}\label{vcc}
Let $\mathbb P(S_{n\times n})$ be the projectivization of $S_{n\times n}$ the set of all $n\times n$ symmetric matrices. It is clear that $\mathbb P(S_{n\times n})$ is ${\rm GL}_n$-invariant. $\mathbb P(S_{n\times n})$ has $n$ ${\rm GL}_n$-orbits, whose closures $Z_{n}\supset Z_{n-1}\supset \cdots\supset Z_1$ are given by the condition
that $Z_i$ is the set of points corresponding to matrices of rank at most $i$.  Blowing up $\mathbb P(S_{n\times n})$ successively along the strict transform of
$Z_1,\cdots,Z_{n-1}$, we obtain the spaces of complete quadrics $Q(n-1)$. 
\end{example}

{\bf\noindent Proof of part (A) of Proposition \ref{red2l}.} Using the  Mille Cr\^epes coordinates,  we can show that $\widetilde {\mathbb P}(S_{n\times n})$ is isomorphic to $D^+_1$. By Lemma \ref{embl}, we have $D^+_1\cong\mathcal {ML}_{n}$. \,\,\,$\endpf$

\medskip

{\noindent\bf Proof of  Theorem \ref{gwondl}.} 
By Lemma \ref{cpcl}, $RL_{n}:\mathcal {TL}_{n}\rightarrow \mathrm{LG}(n,2n)$ extends $\mathcal {KL}^{-1}_{n}$. By Proposition \ref{tspnsmoothl}, $\mathcal {TL}_{n}$ is smooth. 
Property (A) follows from Lemma \ref{embl}. 

Recall that $\mathcal {PL}_{n}$ is an equivariant flat retraction whose restriction to $D^+_1$ is an isomorphism. 
Moreover, $\mathcal {PL}_{n}(\mathcal D_{(i-1,n+1-i)})=\mathcal {PL}_{n}(D^+_i)=\mathcal {PL}_{n}(D^-_{n+2-i})$ for $2\leq i\leq n$. Property (B) holds. 

It is easy to verify that, after base change to an algebraically closed field of characteristic not $2$, $\mathcal {TL}_n$ is covered by the Type-I Mille Cr\^epes coordinate charts up to the ${\rm GL}_n$-action. By Lemma \ref{snc}, we can conclude that the complement of the open ${\rm GL}_n$-orbit consists of $2n$ smooth prime divisors with simple normal crossings. Let $\mathcal A_l$ be a Type-I Mille Cr\^epes coordinate chart. Then $\mathfrak a, \mathfrak b\in \mathcal A_l$ are in the same orbit if and only if for each $1\leq i\leq n-l$ (resp. $n-l+1\leq i\leq n$), \begin{equation*}
b_{i}\in\mathfrak b\Longleftrightarrow b_{i}\in\mathfrak a\,\,({\rm resp.}\,\,a_{i}\in\mathfrak b\Longleftrightarrow a_{i}\in\mathfrak a).\\
\end{equation*}
Hence the
${\rm GL}_n$-orbit of $\mathcal {TL}_{n}$ one to one corresponds to (\ref{inrulel}). It is clear that the closure of each orbit is smooth. Moreover,  the complement of the open ${\rm GL}_n$-orbit of $D^-_1\cong\mathcal {ML}_{n}$ is the union of 
$\check  D_2,\,\check D_3,\, \cdots,\, \check D_n$. By Example \ref{vcc}, $D^-_1\cong\mathcal {ML}_{n}$ is wonderful. By Proposition \ref{gbl}, which will be proved in \S \ref{kcpt}, $\mathcal {TL}_n$ is toroidal. Property (C) holds.

%By Proposition \ref{mwond}, $D_1^-=\mathcal M_{s,p,n}$ is a wonderful $GL(s,\mathbb C)\times GL(n-s,\mathbb C)$-variety  with  $(r-1)$ $GL(s,\mathbb C)\times GL(n-s,\mathbb C)$-stable divisors $\check D_2,\cdots,\check D_r$.

We complete the proof of Theorem \ref{gwondl}. \,\,\,$\endpf$
\medskip

To prove part (A) of Theorem \ref{modulil}, we first prove the following lemma.
\begin{lemma}\label{moduli1l}
After base change to an algebraically closed field $\mathbb K$, the following holds. 
\begin{enumerate}[label=(\alph*)]

\item For each closed point $q\in \mathcal {ML}_{n}$, the fiber $Z_q:=(\mathcal {PL}_{n})^{-1}(q)$ consists of a chain of $\mathbb G_m$-stable smooth rational curves.

\item The restriction of $RL_{n}$ to $Z_q$ is an embedding.

\item Generically $RL_{n}(Z_q)$ is a smooth rational curve of degree $n$ with respect to the Pl\"ucker embedding.

\item For any closed points $q,q^{\prime}\in \mathcal {ML}_{n}$,  $RL_{n}(Z_q)=RL_{n}(Z_{q^{\prime}})$ if and only if $q=q^{\prime}$.

\end{enumerate} 
\end{lemma}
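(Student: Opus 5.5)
The plan is to work entirely in the Mille Cr\^epes coordinate charts, using the explicit form of the $\mathbb G_m$-action and of $\mathcal {PL}_n=(\check{\mathfrak P})^{-1}\circ\mathfrak P$. Since $\mathcal {PL}_n$ is ${\rm GL}_n\times\mathbb G_m$-equivariant and, over $\mathbb K$ of characteristic $\neq 2$, $\mathcal {TL}_n$ is covered by Type-I charts up to the ${\rm GL}_n$-action, it suffices to treat $q$ in the orbit representatives of Theorem \ref{gwondl}(C). In characteristic $2$ we also need the Type-II charts; the computation there is identical in shape, since $\mathbb G_m$ still acts only through $a_{n-l+1}$ and $b_{l+1}$. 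In a Type-I chart $\mathcal A_l$ the map $\mathfrak P$ forgets exactly the scaling between the weight blocks. Concretely, the fiber of $\mathcal {PL}_n$ through a point $\mathfrak a$ is cut out by fixing $X$, all $\overrightarrow B^k$, and all $a_i,b_i$ except the product $a_{n-l+1}b_{l+1}$, which is essentially the $\mathbb G_m$-invariant. The first thing I would verify is this formula for $\mathfrak P$ in coordinates, using (\ref{fskl}) and Lemma \ref{sepal}.

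For (a), fix $q$. By Theorem \ref{gwondl}(B), the set of $\check D_i$ containing $q$ determines which fixed components $\mathcal D_{(n-k,k)}$ meet $Z_q$. Write $q\in\check D_{i_1}\cap\cdots\cap\check D_{i_r}$ and set $i_0=1$, $i_{r+1}=n+1$. I expect $Z_q$ to meet $\mathcal D_{(n-k,k)}$ exactly for $k=n+1-i_j$, each in a single point. Each adjacent pair of such fixed points is joined by one $\mathbb G_m$-orbit closure: in the chart $\mathcal A_l$ adapted to that pair, this closure is the line where only $a_{n-l+1}$ or $b_{l+1}$ varies, and on it the coordinate $a_{n-l+1}b_{l+1}=0$. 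When $q$ lies in the open orbit, $Z_q$ is instead the closure of a single free $\mathbb G_m$-orbit from $\mathcal D_{(n,0)}$ to $\mathcal D_{(0,n)}$. Flatness of $\mathcal {PL}_n$ and the degeneration of the generic fiber then show that $Z_q$ is exactly this chain, reduced, with no extra components. Smoothness of each component comes from the fact that it is a coordinate line in a chart.

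For (b), the components of the chain are sent by $RL_n$ to $\mathbb G_m$-orbit closures in $\mathrm{LG}(n,2n)$ that join distinct fixed components $\mathcal V_{(n-k,k)}$. I would check in the chart formula (\ref{ws}) that $RL_n$ is injective on each line. This holds because $a_{n-l+1}$ (resp.\ $b_{l+1}$) appears linearly, as the coefficient of a nonzero $\Omega_k$ block, so the map is a closed immersion on each line. The same formula gives injectivity of the differential at the nodes. Distinct components have distinct fixed-point endpoints, so they have disjoint images away from the nodes.

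For (c), take $q$ generic, $l=0$, with $\Gamma_0$ giving the matrix $(I\,|\,tS)$, where $S$ is a nondegenerate symmetric matrix and $t=b_1$. Its Pl\"ucker coordinates are the $k\times k$ minors of $tS$, of degree $k$ in $t$, and the top one is $t^n\det S\neq 0$. So the curve is $t\mapsto[\,t^k\cdot(\text{minors})\,]$, a rational curve of degree $n$. It is smooth because it is an orbit closure with two fixed points, and the weights $0$ and $n$ are each attained with multiplicity one. For (d), the point $\mathfrak P(Z_q)=q$ is recovered from the curve $RL_n(Z_q)$ itself, since projecting its points to each $\mathbb P^{N^k_n}$ gives the same point wherever defined. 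More robustly, $\mathcal{PL}_n(x)$ depends only on the $\mathbb G_m$-orbit of $RL_n(x)$ together with the limits along the chain. Hence equal image curves give the same components and the same $q$.

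The main obstacle is (a) at boundary points $q$: I must show that the fiber contains no embedded or extra components beyond the orbit chain. I would handle this by computing the fiber scheme-theoretically in the charts, where it should be cut out by the equations $a_{n-l+1}b_{l+1}=\text{const}$ together with fixed values of the other coordinates.
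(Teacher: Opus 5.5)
Your route is the paper's: Mille Cr\^epes charts, with the fiber of $\mathcal{PL}_n$ in $\mathcal A_l$ cut out by fixing $a_{n-l+1}b_{l+1}$ and all other coordinates. Parts (a) and (b) are read off from this, the degree in (c) comes from the minors of $(I\,|\,b_1S)$ in $\mathcal A_0$, and $q$ is recovered in (d) via the projections $f^k$. However, the step that actually carries (d) is missing.

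The phrase ``projecting its points to each $\mathbb P^{N^k_n}$ gives the same point wherever defined'' recovers the $k$-th component $q_k$ of $\mathfrak P(Z_q)$ only if $f^k$ is regular at some point of $RL_n(Z_q)$. For boundary $q$, each component of the chain sees only some of the weights, and you never show that together they see every $k\in\{0,\dots,n\}$. This coverage is exactly what the paper's proof supplies:
\begin{itemize}
\item Write $Z_q=\gamma_1\cup\cdots\cup\gamma_m$, with $\gamma_i$ joining $\mathcal D_{(n-k_{i-1},k_{i-1})}$ to $\mathcal D_{(n-k_i,k_i)}$ and $0=k_0<\cdots<k_m=n$.
\item In the chart $\mathcal A_{k_{i-1}}$ (after a ${\rm GL}_n$-translate), the open part of $\gamma_i$ has $Z=0$ and $W=tW'$ with $\operatorname{rank}W'=k_i-k_{i-1}$.
\item By Lemma \ref{sepal}, $f^k$ is therefore regular at the generic point of $RL_n(\gamma_i)$ precisely for $k_{i-1}\le k\le k_i$, and these intervals cover $\{0,\dots,n\}$.
\item Since $\mathcal{TL}_n$ is the closure of the graph of $\mathcal{KL}_n$, we get $q_k=f^k(y)$ at any such $y$. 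Hence $RL_n(Z_q)$ determines $q$.
\end{itemize}
You should drop your ``more robustly'' sentence. It presupposes what (d) asserts, and read pointwise it is false: for $n\ge 3$, $RL_n$ contracts positive-dimensional subvarieties of the fixed loci $\mathcal D_{(n-k,k)}$, and $\mathcal{PL}_n$ is not constant on them.

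Two smaller corrections.

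In (c), your reason for smoothness does not work. The extreme weights $0$ and $n$ occurring with multiplicity one does not make an orbit closure smooth at its limit points: $t\mapsto[1:t^2:t^3]$ is cuspidal. What is needed is that the adjacent weights $1$ and $n-1$ occur, i.e.\ $S\neq 0$ and $\operatorname{adj}S\neq 0$, which holds for nondegenerate $S$. Alternatively, deduce smoothness directly from your (b), since $Z_q\cong\mathbb P^1$ for generic $q$.

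In (a), flatness of $\mathcal{PL}_n$ does not give reducedness of special fibers; compare the flat family $y^2=tx$. So the scheme-theoretic chart computation you postpone to the end is not a backup. It is the actual argument, as in the paper.
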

{\noindent\bf Proof of Lemma \ref{moduli1l}.} %Note that over $\mathbb K$, $\mathcal {TL}_n$ is covered by the Type-I Mille Cr\^epes coordinate charts up to the ${\rm GL}_n$-action.
Consider the Type-I Mille Cr\^epes coordinate chart $\mathcal A_{l}$ with $l=0$. It is clear that the fiber $Z_q$ of $\mathcal {PL}_{n}$ over a generic point $q$ is defined in $\mathcal A_l$ by fixing all the variables except $b_{1}$. Recall that $RL_n(Z_q\cap\mathcal A_l)$ is given by the projection to $\mathbb P^{N_{n}}$. Computing the corresponding determinants, we conclude that all determinants are polynomials in $b_{1}$, there is one  taking value $1$, and the highest degree of $b_{1}$ is $n$. The same argument works for any Type-II Mille Cr\^epes coordinate chart $\mathcal A_{0}^{\tau}$ with $\tau\in\mathbb J_0$. Property (c) follows. 

Take any Type-I Mille Cr\^epes coordinate chart $\mathcal A_l$ with $0\leq l\leq n$. Computation yields that if $l=0$ (resp. $n$), any fiber $Z_q$ restricted to $\mathcal A_l$ is defined by fixing all the variables except $b_{1}$ (resp. $a_1$); otherwise, any fiber $Z_q$ restricted to $\mathcal A_l$ is defined by fixing $a_{n-l+1}\cdot b_{l+1}$ and all the other variables. Then it is easy to verify that $Z_q$ is $1$-dimensional, reduced, and consists of a chain of $\mathbb G_m$-stable smooth rational curves; moreover, the restriction of $RL_n$ to $Z_q$ is an embedding. The same argument works for Type-II charts. We conclude Properties (a), (b).

Next, suppose that $RL_{n}(Z_q)=RL_{n}(Z_{q^{\prime}})$. We can write $Z_q$ as a chain of a rational curves $\cup_{i=1}^m\gamma_i$ with integers $0< k_1<\cdots<k_{m-1}<n$ such that 
$$\gamma_{1}\subset \overline{\mathcal D_{(n-k_1,k_1)}^-},\,\,\gamma_{i}\subset \overline{\mathcal D_{(n-k_i,k_i)}^-}\cap\overline{\mathcal D_{(n-k_{i-1},k_{i-1})}^+}\,\,{\rm for\,\,}2\leq i\leq m-1,\,\,{\rm and\,\,}\gamma_{m}\subset \overline{\mathcal D_{(n-k_{m-1},k_{m-1})}^+}.$$
Notice that the rational map $f^k$ given by (\ref{fskl}) is well-defined on $\gamma_i$ for any $1\leq i\leq m$ and $k_{i-1}\leq k\leq k_i$. We can thus conclude that $q=q^{\prime}$. Property (d) follows. 

We complete the proof of Lemma \ref{moduli1l}. \,\,\,$\endpf$
\medskip

{\noindent\bf  Proof of part (A) of Theorem \ref{modulil}.} By Property (B) in Theorem \ref{gwondl} and Properties (b), (d) in Lemma \ref{moduli1l}, there is a bijective morphism $\mathcal {ML}_{n}\longrightarrow \mathrm{LG}(n,2n)/ \! \! /\mathbb G_m$. % $\mathcal M_{s,p,n}$ is isomorphic to the normalization of $G(p,n)/ \! \! /\mathbb G_m$. 
We next show that $\mathrm{LG}(n,2n)/ \! \! /\mathbb G_m$ is smooth following the idea in the proof of \cite[Proposition 6.3]{ORCW}.

Denote by $C_g$ the closure of the general orbit of the action in $\mathrm{LG}(n,2n)$, and by $[C_g]$ the corresponding point in the Hilbert scheme ${\rm Hilb}(\mathrm{LG}(n,2n))$. Notice that the tangent bundle $T_{\mathrm{LG}(n,2n)}$ is globally generated. Then  $H^1(\mathbb P^1,\mu^*T_{\mathrm{LG}(n,2n)})=0$ for any morphism $\mu:\mathbb P^1\to \mathrm{LG}(n,2n)$, and 
$H^1(C_g,\mathcal N_{C_g/\mathrm{LG}(n,2n)})=H^1(C_g,{\left.T_{\mathrm{LG}(n,2n)}\right|}_{C_g})=0$ by Property (c) in Lemma \ref{moduli1l}.  Hence ${\rm Hilb}(\mathrm{LG}(n,2n))$ is smooth at $[C_{g}]$, and there is a unique irreducible component of ${\rm Hilb}(\mathrm{LG}(n,2n))$ containing it, which we denote by $\mathcal H$. Consider the induced $\mathbb G_m$-action on $\mathcal H$. By \cite[Theorem 4.1]{Bi}, there is a unique connected component of the fixed point scheme $\mathcal M \subset \mathcal H$ containing $[C_g]$ such that $\mathcal M$ is smooth at $[C_g]$. It is clear that the generic point of $\mathcal M$ parametrizing a smooth $\mathbb G_m$-invariant rational curve passing through  $\mathcal V_{n,0}$ and $\mathcal V_{0,n}$. Then $\mathcal M\cong \mathrm{LG}(n,2n)/ \! \! /\mathbb G_m$. Now by \cite[Theorem 2.5]{Bi} again,
it suffices to show that $\mathcal H$ is smooth at every point of $\mathcal M$.

Since the morphism $\mathcal {ML}_{n}\longrightarrow \mathrm{LG}(n,2n)/ \! \! /\mathbb G_m$ is surjective, for every $[C]\in \mathcal M$, the corresponding subscheme $C$ is a chain of smooth rational curves with simple normal crossingss by Property (a) in Lemma \ref{moduli1l}. Hence the conormal sheaf $\mathcal I_{C,\mathrm{LG}(n,2n)}/\mathcal I^2_{C,\mathrm{LG}(n,2n)}$ is locally free. Applying the functor ${\rm Hom}(\,\cdot\,,\mathcal O_C)$ to the short exact sequence
$$
0\rightarrow{\mathcal I_{C,\mathrm{LG}(n,2n)}/\mathcal I^2_{C,\mathrm{LG}(n,2n)}}\rightarrow{\left.\Omega_{\mathrm{LG}(n,2n)}\right|}_{C}\rightarrow{\Omega_C}\rightarrow0,$$
we get that ${\rm Ext}^1(\mathcal I_{C,\mathrm{LG}(n,2n)}/\mathcal I^2_{C,\mathrm{LG}(n,2n)},\mathcal O_C)$ is a quotient of ${\rm Ext}^1({\left.\Omega_{\mathrm{LG}(n,2n)}\right|}_{C},\mathcal O_C)=H^1(C,{\left.T_{\mathrm{LG}(n,2n)}\right|}_{C})$. Moreover, we can prove by induction that $H^1(C,{\left.T_{\mathrm{LG}(n,2n)}\right|}_{C})=0$
as in the proof of \cite[Lemma~10]{FP}. Hence $\mathcal M$ is smooth at $[C]$ by \cite[Theorem~I.2.8]{Ko}.

We complete the proof of Theorem \ref{modulil}.\,\,\,$\endpf$

\medskip

Similarly, for $1\leq k\leq [\frac{n}{2}]$, we can define  smooth prime divisors with simple normal crossingss:
\begin{equation*}%\label{boundaryo}
D_{k}^-:=\overline{(RO_{n})^{-1}(\underline{\mathcal V}_{(n-2k+2,2k-2)}^-)},\,\,D_{k}^+:=\overline{(RO_{n})^{-1}(\underline{\mathcal V}_{(n-2[\frac{n}{2}]+2k-2,2[\frac{n}{2}]-2k+2)}^+)}, 
\end{equation*} 
where $\underline{\mathcal V}_{(n-2k,2k)}^{\pm}$ are stable/unstable subschemes
of $\mathrm{OG}^+(n,2n)$, defined similarly to those in equation (\ref{vani}).  Note that by Lemmas \ref{pure},  \ref{sepao},  $\underline S_k$ is the union of $\overline{\underline{\mathcal V}_{(n-2k-2,2k+2)}^+}$ and  $\overline{\underline{\mathcal V}_{(n-2k+2,2k-2)}^-}$.

We can define the morphism  $\mathcal {PO}_{n}:\mathcal {TO}_{n}\rightarrow D^-_1$ in the same way.

\medskip

%Similar to the proof of Lemma \ref{moduli1l}, we can show that \begin{lemma}\label{moduli1o}Base change to an algebraically closed field $\mathbb K$. The following holds.\begin{enumerate}[label=(\alph*)]\item For each closed point $q\in \mathcal {MO}_{n}$, the fiber $Z_q:=(\mathcal {PO}_{n})^{-1}(q)$ consists of a chain of $\mathbb G_m$-stable smooth rational curves.\item The restriction of $RO_{n}$ to $Z_q$ is an embedding.\item Generically $RO_{n}(Z_q)$ is a smooth rational curve of degree $[\frac{n}{2}]$ with respect to the spinor embedding.\item For any closed points $q,q^{\prime}\in \mathcal {MO}_{n}$,  $RO_{n}(Z_q)=RO_{n}(Z_{q^{\prime}})$ if and only if $q=q^{\prime}$.\end{enumerate} \end{lemma}

{\bf\noindent Proof of Theorem \ref{gwondo}, and part (B) of Proposition \ref{red2l}.} The proof is the same as that for Theorem \ref{gwondl} and part (A) of Proposition \ref{red2l}. \,\,\,$\endpf$ 
\medskip

{\bf\noindent Proof of Theorem \ref{modulil}.} Part (B) can be proved similarly. The smoothness of the universal families associated with ordinary Grassmannians follows from \cite[Theorems 1.3, 1.8]{FW}. \,\,\,$\endpf$

\medskip

\subsection{Resolution of the Landsberg--Manivel birational maps}\label{landmani}

The Landsberg--Manivel birational map $\mathcal {LML}:\mathbb P^{\frac{n(n+1)}{2}}\dashrightarrow
\mathrm{LG}(n,2n)\subset \mathbb P^{N_{n}}$ takes the form 
$e_{\mathrm{LG}}\circ LML$, where $LML:\mathbb P^{\frac{n(n+1)}{2}}\dashrightarrow\mathrm{LG}(n,2n)$ is given by
\begin{equation}\label{lmm}    
[x_{00}, x_{ij} (1\leq i\leq j\leq n)]\xdashmapsto{\,\,\,}\big(x_0\cdot I_{n\times n}\hspace{-.12in} \begin{matrix} &\hfill\tikzmark{c}\\ 
&\hfill\tikzmark{d} \end{matrix} \hspace{.12in} (x_{ij})_{1\leq i,j\leq n}\big)
\tikz[remember picture,overlay]   \draw[dashed,dash pattern={on 4pt off 2pt}] ([xshift=0.5\tabcolsep,yshift=3pt]c.north) -- ([xshift=0.5\tabcolsep,yshift=1.5pt]d.south);
\end{equation}
with the convention $x_{ij}=x_{ji}$.

Similarly to \cite{Ka}, we define $X^{(0)}:={\rm Proj}\,\mathbb Z[x_{00}, x_{ij} (1\leq i\leq j\leq n)]$, and its closed subschemes
\vspace{-.08in}
\begin{equation}\label{lblow}
\begin{tikzcd}
&Y^{(0)}_0\ar[hook]{r}&Y^{(0)}_1\ar[hook]{r}&\cdots\ar{r}&Y^{(0)}_{n-1}&\\
&&Z^{(0)}_{n-1}\ar[hook]{u}\ar[hook]{r}&\cdots\ar[hook]{r}&Z^{(0)}_1\ar[hook]{u}\ar[hook]{r}&Z^{(0)}_0\\
\end{tikzcd}\vspace{-20pt}
\end{equation}
by $Y^{(0)}_l:=V(\mathcal I^{(0)}_l)$, $Z^{(0)}_l:=V(\mathcal J^{(0)}_l)$, $0\leq l\leq n-1$. Here $\mathcal I^{(0)}_l$, $0\leq l\leq n-1$, is the homogeneous ideal generated by all $(l+1)\times(l+1)$ subdeterminants of the matrix $(x_{ij})_{1\leq i,j\leq n}$ with the convention $x_{ji}=x_{ij}$;  $\mathcal J^{(0)}_0:=(x_{00})$, $\mathcal J^{(0)}_l:=(x_{00})+\mathcal I^{(0)}_{n-l}$ for $1\leq l\leq n-1$. For $1\leq k \leq n-1$, inductively define %let the scheme $X^{(k)}$ together with closed subschemes $Y^{(k)}_l$, $Z^{(k)}_l$ $(0\leq l\leq p-1)$ defined as follows.
$X^{(k)}\rightarrow X^{(k-1)}$ to be the blow-up of $X^{(k-1)}$ along the closed subscheme $Y^{(k-1)}_{k-1}\cup Z^{(k-1)}_{n-k}$. Define the subscheme $Y^{(k)}_{k-1}\subset X^{(k)}$ (resp.~$Z^{(k)}_{n-k}\subset X^{(k)}$) to be the inverse image of $Y^{(k-1)}_{k-1}$ (resp.~$Z^{(k-1)}_{n-k}$) under the morphism $X^{(k)}\rightarrow X^{(k-1)}$, and for $l\neq k-1$ (resp.~$l\neq n-k$) define the subscheme $Y^{(k)}_l\subset X^{(k)}$ (resp.~$Z^{(k)}_l\subset X^{(k)}$) to be the strict transform of $Y^{(k-1)}_l\subset X^{(k-1)}$ (resp. $Z^{(k-1)}_l\subset X^{(k-1)}$). Set ${\rm KAL}_{n}:=X^{(n-1)}$, $Y_l:=Y^{(n-1)}_l$, $Z_l:=Z^{(n-1)}_l$, and denote the corresponding blow-up by
\begin{equation}\label{kmorpl}
\mathcal {KAL}:{\rm KAL}_{n}\longrightarrow X^{(0)}\cong\mathbb P^{\frac{n(n+1)}{2}}.
\end{equation}

\begin{lemma}\label{g2kausz}
${\rm KAL}_{n}\cong\mathcal {TL}_{n}$ over ${\rm Spec}\,\mathbb Z$. The blow-up center of each
$X^{(k)}\rightarrow X^{(k-1)}$, $1\leq k \leq n-1$, consists of two disjoint irreducible smooth subschemes. 
\end{lemma}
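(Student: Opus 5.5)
Set $T:=\mathcal{TL}_n$ and $K:={\rm KAL}_n$. The plan is to show that both $T$ and $K$ are the closure of the graph of one rational map. Concretely, both should be the closure of the image of
$$\mathbb P^{\frac{n(n+1)}{2}}\dashrightarrow \mathbb P^{\frac{n(n+1)}{2}}\times\mathbb P^{N_n}\times\prod_{k}\mathbb P^{N^k_n}$$
given by $x\mapsto (x,\mathcal{KL}_n(LML(x)))$.

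\emph{Identifying $T$.} The map $LML$ sends $[x_{00},x_{ij}]$ to the Lagrangian subspace spanned by the rows of $(x_{00}I\ |\ (x_{ij}))$. On the chart $U_0$ this is the graph of the symmetric matrix $W=x_{00}^{-1}(x_{ij})$, so $LML$ is an isomorphism from $\{x_{00}\neq0\}$ onto $U_0$. Its inverse is the projection onto the summands $\bigwedge^nV\otimes\bigwedge^0V^*$ and $\bigwedge^{n-1}V\otimes\bigwedge^1V^*$. By Laplace expansion, the Plücker coordinates in these summands are $x_{00}^n$ and $x_{00}^{n-1}x_{ij}$. Hence $\mathcal{KAL}$ is given on $T$ by the coordinates $f^0,f^1$, which are already part of the ambient product. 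It is therefore a morphism on $T$, and it is compatible with $\mathcal{LML}$ on the dense open set. Consequently $T$ is the graph closure described above.

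\emph{Identifying $K$.} It is enough to show that each pullback ideal $\mathcal{KAL}^{-1}(\mathcal I_{f^k})\cdot\mathcal O_K$ is invertible, where $\mathcal I_{f^k}$ is the base ideal of $f^k\circ LML$. On $\mathbb P^{\frac{n(n+1)}{2}}$ this ideal is generated by the degree-$n$ products $x_{00}^{\,n-k}\cdot\{k\times k \text{ minors of }(x_{ij})\}$. This follows from Lemma~\ref{sepal} applied to the chart $U_0$ after homogenizing. Its cosupport is $Z^{(0)}_0\cup Y^{(0)}_{k-1}$.

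The invertibility will be checked by induction on the blow-up steps of \eqref{kmorpl}. I will use Kausz/Vainsencher-type local charts, which are the analogue of the Mille Crêpes charts with $\mathbb P^{\frac{n(n+1)}{2}}$ in place of $\mathrm{LG}(n,2n)$. After blowing up $Y_{k-1}$ and $Z_{n-k}$, the symmetric matrix acquires the normal form $\sum_k\big(\prod a\big)\Omega_k$ and $x_{00}=\prod b$. In these coordinates each ideal of minors becomes a monomial times a unit, which gives invertibility. This yields a morphism $K\to T$, birational and compatible with the projections. Conversely, $T$ dominates each blow-up $X^{(k)}$: on Mille Crêpes charts the ideals of $Y^{(k-1)}_{k-1}$ and $Z^{(k-1)}_{n-k}$ pull back to principal ideals, namely to $a$- or $b$-monomials. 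This is checked chart by chart using the coordinates of Section~\ref{vanderl}. By the universal property of blow-ups, $T\to X^{(n-1)}=K$ exists. The two morphisms are mutually inverse on a dense open set, and both schemes are separated and integral, so $K\cong T$. Type-II charts will be needed in characteristic $2$, exactly as in the proof of Proposition~\ref{tspnsmoothl}.

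\emph{Disjointness and smoothness of the centers.} These follow from the same charts. By induction, $Y^{(k-1)}_{k-1}$ is the strict transform of the rank $\le k-1$ locus. The earlier blow-ups along the smaller-rank loci make it smooth, as for complete quadrics (Example~\ref{vcc}). Similarly, $Z^{(k-1)}_{n-k}$ is the transform of $\{x_{00}=0,\ \mathrm{rank}\le n-k\}$, which is smooth by the same mechanism inside the divisor $Z_0$. The two are disjoint after the $(k-1)$st step. Before any blow-up, they can only meet over $x_{00}=0$ and $\mathrm{rank}\le k-1$. Over that locus, the exceptional divisors introduced earlier separate them: in the local normal form, one center is cut out by an $a$-coordinate and the other by a $b$-coordinate, and these cannot vanish simultaneously at that stage. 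Irreducibility holds because each center is the closure of a single ${\rm GL}_n$-orbit stratum of a ${\rm GL}_n\times\mathbb G_m$-equivariant blow-up.

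\emph{Main obstacle.} I expect the hardest part to be the chart-by-chart principalization, in particular the disjointness at each step. My plan for it is to track the intersection pattern of the centers through the index rule \eqref{inrulel}. Under the identification $T\cong K$, the center of the $k$th step should correspond to $D^{+}_{\cdot}\cap\cdots$ strata with $\min(I^+)+\min(I^-)<n+2$, and such strata are empty.
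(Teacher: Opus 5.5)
\textbf{There is a genuine gap.} Your architecture differs from the paper's. You realize both ${\rm KAL}_n$ and $\mathcal{TL}_n$ as the graph closure of $x\mapsto(x,\mathcal{KL}_n(LML(x)))$ and get inverse morphisms from universal properties; the paper instead matches charts directly ($\mathcal{KAL}^{-1}(V_{00})\cong(RL_n)^{-1}(U_0)$, then $V^{(1)}_{11,00}\leftrightarrow(RL_n)^{-1}(U_1)$, and so on inductively). Your route does not avoid those chart computations, and the reduction in ``Identifying $K$'' fails as stated.

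\textbf{The missing ideal.} You only require the pullbacks of the base ideals of $f^k\circ LML$ to be invertible, i.e.\ of $x_{00}^{n-k}I_k$ with $I_k$ the ideal of $k\times k$ minors. That gives a morphism ${\rm KAL}_n\to\mathbb P^{\frac{n(n+1)}{2}}\times\prod_k\mathbb P^{N^k_n}$ only. The factor $\mathbb P^{N_n}$, i.e.\ a morphism to $\mathrm{LG}(n,2n)$, needs the pullback of the base ideal of $\mathcal{LML}$ itself, $\sum_{k=0}^n x_{00}^{n-k}I_k$, to be invertible. A sum of invertible ideals need not be invertible, and this is exactly where the centers $Z^{(k-1)}_{n-k}$ come from: the cosupports $Z^{(0)}_0\cup Y^{(0)}_{k-1}$ of your ideals never involve $Z^{(0)}_l$ with $l\geq 1$.

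For $n=2$, blow up only the point $Y^{(0)}_0$. Then $(x_{00}^2)$, $x_{00}(x_{11},x_{12},x_{22})$ and $(x_{11}x_{22}-x_{12}^2)$ are all invertible. Yet near a point of the conic $Z^{(0)}_1=\{x_{00}=0,\ x_{11}x_{22}=x_{12}^2\}$, the base ideal of $\mathcal{LML}$ is the codimension-two ideal of that conic. So ${\rm Bl}_{\rm pt}\mathbb P^3$, of Picard rank $2$, satisfies your criterion, while $\mathcal{TL}_2$ has Picard rank $3$. You must add this ideal and show in charts of $X^{(n-1)}$ that locally one of the generators of the $x_{00}^{n-k}I_k\cdot\mathcal O$ divides all the others; this is what (\ref{cen0})--(\ref{cen2}) serve. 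Also note $Z^{(0)}_{n-k}=\{x_{00}=0,\ \mathrm{rank}\le k\}$, not $\mathrm{rank}\le n-k$.

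\textbf{Smaller points.}
\begin{enumerate}
\item In ``Identifying $T$'' the inference does not follow. $\mathbb P^{N^0_n}$ is a point and $f^1\circ LML=[\pm x_{ij}]$ forgets $x_{00}$, so $(f^0,f^1)$ cannot give $\mathcal{KAL}$. The map $\mathcal{KAL}$ is a projection of the $\mathbb P^{N_n}$-factor, with base ideal $\mathcal S_0+\mathcal S_1$ on $\mathrm{LG}(n,2n)$. It is still a morphism on $\mathcal{TL}_n$: by Lemma \ref{sepal} and Laplace expansion of $\det Z$, $\mathcal S_0\subseteq\mathcal S_1$ on each $U_{i_1\cdots i_l}$ with $l\geq1$, and $\mathcal S_0=\mathcal O$ on $U_0$. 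Hence the pulled-back ideal is locally $\mathcal S_1\cdot\mathcal O_{\mathcal{TL}_n}$ or trivial, and invertible by Lemma \ref{cpcl}.
\item Disjointness: the centers have codimension at least two, so they are not cut out by single $a$- or $b$-coordinates, and ``cannot vanish simultaneously'' is not an argument. The actual mechanism, for $k\geq 2$, is $Y^{(0)}_{k-1}\cap Z^{(0)}_{n-k}=Z^{(0)}_{n-k+1}$, which is blown up at step $k-1$. Generically, the two strict transforms meet that exceptional divisor in disjoint loci: the $x_{00}$-normal direction versus rank-one directions with zero $x_{00}$-component. This must be verified in charts at every point, not just generically.
\item Your fallback via (\ref{inrulel}) does not substitute for this. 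It concerns $\mathcal{TL}_n$ over algebraically closed fields of characteristic $\neq 2$, not the intermediate $X^{(k-1)}$ over $\mathbb Z$.
\item Irreducibility should come from strict transforms of the irreducible determinantal loci. ``Closure of a single orbit'' fails over $\mathbb Z$, and in characteristic $2$ symmetric matrices of a given even rank form two congruence classes.
\end{enumerate}
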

{\noindent\bf Proof of Lemma \ref{g2kausz}.} Denote by $V_{ij}$ the open subscheme of $\mathbb P^{\frac{n(n+1)}{2}}$
given by $x_{ij}\neq 0$. Decomposing $RL_n$ into iterated blow-ups as in \S \ref{iterated}, we have $(\mathcal {KAL})^{-1}(V_{00}) \cong (RL_n)^{-1}(U_0)$.

Consider $V_{11}$ with coordinates $y_{00}:=x_{00}/x_{11}$, $y_{ij}:=x_{ij}/x_{11}$. Computation yields that 
\begin{equation}\label{cen0}
\begin{split}
&Y_0^{(0)}\cap V_{11} = \emptyset,\,\,Z_{n-1}^{(0)}\cap V_{11}= V\left(y_{00},\left|\begin{matrix}
1&y_{1j}\\    y_{1i}&y_{ij}
\end{matrix}\right|,\,\,2\leq i\leq j\leq n\right).
\end{split}    
\end{equation}
Make a change of coordinates by $u_{ij}:=y_{ij}-y_{1j}y_{1i}$, $2\leq i\leq j\leq n$. The blow-up
$X^{(1)} \rightarrow X^{(0)}$ over $V_{11}$  is isomorphic to the blow-up of ${\rm Spec}\,\mathbb Z[y_{00},y_{12},\cdots,y_{1n}, u_{22},u_{23},\cdots, u_{nn}]$ in the ideal $(y_{00},u_{22},u_{23},\cdots, u_{nn})$, which we denote by ${\rm Bl}_{(Y_0^{(0)}\cup Z_{n-1}^{(0)})\cap V_{11}}V_{11}$,

Take a chart $V_{11,00}^{(1)}:={\rm Spec}\,\mathbb Z[y_{00},y_{12},\cdots$, $y_{1n},v_{22},\cdots]$ of ${\rm Bl}_{(Y_0^{(0)}\cup Z_{n-1}^{(0)})\cap V_{11}}V_{11}$ defined by $v_{ij}:=u_{ij}/y_{00}$, $2\leq i\leq j\leq n$. Then, the strict transforms of 
$Y^{(0)}_1$, $Z^{(0)}_{n-2}$ in $V_{11,00}^{(1)}$ are
\begin{equation}\label{cen1}
V\left( 
v_{ij},\,\,2\leq i\leq j\leq n\right),\,\,V\left(\left|\begin{matrix}
v_{ik}&v_{il}\\
v_{jk}&v_{jl}
\end{matrix} \right| ,\,\,2\leq i,j,k,l\leq n\right),   
\end{equation}
respectively. It is easy to verify that the pull-back of $V_{11,00}^{(1)}$ to $\operatorname{KA}_n$ is isomorphic to $(RL_n)^{-1}(U_{1})$.

Take a chart $V_{11,22}^{(1)}:={\rm Spec}\,\mathbb Z[\underline y_{00},y_{12},\cdots,y_{1n}, u_{22}, \underline y_{23},\cdots]$ of ${\rm Bl}_{(Y_0^{(0)}\cup Z_{n-1}^{(0)})\cap V_{11}}V_{11}$ defined by 
$\underline y_{00}:=y_{00}/u_{22}$, $\underline y_{2j}:=u_{2j}/u_{22}$ for $3\leq j\leq n$, and $\underline y_{ij}:=u_{ij}/u_{22}-u_{2i}u_{2j}/u_{22}^2$ for $3\leq i\leq j\leq n$. Computation yields that the strict transforms of 
$Y^{(0)}_1$, $Z^{(0)}_{n-2}$ in $V_{11,22}^{(1)}$ are 
\begin{equation}\label{cen2}
Y^{(1)}_1\cap V_{11,22}^{(1)}=\emptyset,\,\,Z^{(1)}_{n-2}\cap V_{11,22}^{(1)}=V\left(  \underline y_{00},\,\, \left|\begin{matrix}
1&\underline y_{2j}\\ \underline y_{2i}&\underline y_{ij}
\end{matrix}\right|,\,\,3\leq i\leq j\leq n\right),   
\end{equation}
respectively, which is similar to (\ref{cen0}).

For $V_{12}$, and the chart $V_{11,23}^{(1)}:={\rm Spec}\,\mathbb Z[\underline y_{00},y_{12},\cdots,y_{1n},  \underline y_{22},u_{23},$ $ \underline y_{24},\cdots]$ of ${\rm Bl}_{(Y_0^{(0)}\cup Z_{n-1}^{(0)})\cap V_{11}}V_{11}$ defined by $\underline y_{00}=y_{00}/u_{23}$, $\underline y_{ij}=u_{ij}/u_{23}$ for $2\leq i\leq j\leq n$ such that $(i,j)\neq(2,3)$,  we can use the Type-II  Mille Cr\^epes coordinate charts to compute the blow-ups.

Continuing the above process, we can prove that ${\rm KAL}_{n}\cong\mathcal {TL}_{n}$ by induction. Moreover, by (\ref{cen0}), (\ref{cen1}), (\ref{cen2}), we can show that the blow-up center of each
$X^{(k)}\rightarrow X^{(k-1)}$, $1\leq k \leq n$, consists of two disjoint irreducible smooth subschemes. 

The proof of
Lemma \ref{g2kausz} is complete. \,\,\,$\endpf$
\medskip

{\noindent\bf Proof of part (A) of Theorem \ref{g4kausz}.} 
It is easy to verify that the blow-up $\mathcal {KAL}$ is identical to the projection $\mathcal {TL}_n\rightarrow \mathbb P\left((\bigwedge^{n}V\otimes\bigwedge^{0}V^*)\oplus(\bigwedge^{n-1}V\otimes\bigwedge^{1}V^*)\right)$. If we only blow up $Z_{n-k}^{k-1}$, $1\leq k\leq n-1$, in the construction (\ref{lblow}), (\ref{kmorpl}), we will get a smooth scheme $tl_n$. One can show that $tl_n$ is the blow-up of $\mathrm{LG}(n,2n)$ iteratively along the total transforms of $\overline{\mathcal V_{(0,n)}^+}$, $\overline{\mathcal V_{(1,n-1)}^+}$, $\cdots$, $\overline{\mathcal V_{(n-2,2)}^+}$. We conclude part (A) of Theorem \ref{g4kausz} by Lemma \ref{g2kausz}. \,\,\,$\endpf$
\medskip

The Landsberg--Manivel birational map $\mathcal {LMO}:\mathbb P^{\frac{n(n-1)}{2}}\xdashrightarrow{\,\,\,}
\mathrm{OG}^+(n,2n)\subset \mathbb P^{\underline N_{n}}$ takes the form 
$e_{\mathrm{OG}^+}\circ LMO$, where $LMO$ is defined similarly to (\ref{lmm}) with convention $x_{ij}=-x_{ji}$. The blow-up ${\mathcal{ KAO}_{n}}$ can be constructed analogously by replacing subdeterminants with sub-Pfaffians in (\ref{lblow}). Part (B) of Theorem \ref{g4kausz} can be proved similarly.

\section{Differential Geometric Properties}\label{kcpt}

For a smooth projective variety $X$, write $N^1(X)$ for the group of divisor classes modulo numerical equivalence and set  $N^1(X)_{\mathbb R}=N^1(X)\otimes_{\mathbb Z}\mathbb R$. 
%The effective cone of $X$ is the convex cone $\operatorname{Eff}(X)\subset N^{1}(X)_{\mathbb R}$ generated by classes of effective divisors. The nef cone of $X$ is the closed convex cone $\mathrm{Nef}(X)\subset N^{1}(X)_{\mathbb R}$ generated by classes of numerically effective divisors. The movable cone of $X$ is the convex cone $\mathrm{Mov}(X)\subset N^{1}(X)_{\mathbb R}$ generated by classes of movable divisors. %These are Cartier divisors whose stable base locus has codimension at least two in $X$. 
Denote by $\mathrm{Nef}(X), \mathrm{Mov}(X),\mathrm{Eff}(X)\subset N^1(X)_{\mathbb{R}}$ be its nef, movable, and effective cone, respectively.
%We have inclusions $\operatorname{Nef}(X) \subset \overline{\operatorname{Mov}(X)}\subset\overline{\operatorname{Eff}(X)}$. %We refer to \cite[Chapter 1]{Deb} for a comprehensive treatment of these topics.

%We define \begin{equation*}\begin{split}\mathcal D(X) & = \{D \subset X \text{ $B$-stable prime divisor}\}, \\\Delta(X) & = \{D \in \mathcal D(X) \mid \text{$D$ is not $G$-stable}\}, \\\mathring{\Delta}(X)&=\{\text{$B$-stable prime divisors containing no $G$-orbit }\}.    \end{split}    \end{equation*}

%\begin{theorem}[Theorem 3.1.3 in \cite{Per}] The following holds.\begin{enumerate} \item Any Cartier divisor on $X$ is linearly equivalent to $\sum_{D \in \mathring{\Delta}(X)} n_D D$ with $n_D \in \mathbb{Z}$. \item A Cartier divisor is globally generated (resp. ample) if and only if it is a linear combination $\sum_{D \in \mathring{\Delta}(X)} n_D D$ with $n_D \geq 0$ (resp. $n_D > 0$) for all $D$.\end{enumerate}\end{theorem}

%$X$ is called a Mori dream space if the following conditions hold. \begin{enumerate}[label={(\arabic*)}]\item $\operatorname{Pic}(X)$ is finitely generated.\item $\operatorname{Nef}(X)$ is generated by the classes of finitely many semi-ample divisors.\item There is a finite collection of small $\mathbb{Q}$-factorial modifications $f_i : X \rightarrow X_i$, such that each $X_i$ satisfies (2), and $\operatorname{Mov}(X) = \bigcup_i f_i^*(\operatorname{Nef}(X_i))$.\end{enumerate}

In this section, we will work over an algebraically closed field $\mathbb K$ of characteristic not $2$, until specifically mentioned.

\subsection{Effective and nef cones of \texorpdfstring{$\mathcal {TL}_n$}{dd}}\label{kcptl}
For convenience, we denote by $K$ the canonical bundle of $\mathcal {TL}_{n}$, and by $H$ the line bundle $(RL_{n})^*\big(\mathcal O_{\mathrm{LG}(n,2n)}(1)\big)$, where $\mathcal O_{\mathrm{LG}(n,2n)}(1)$ is the hyperplane bundle. 
\begin{lemma}\label{excep}
The exceptional divisor of the blow-up ${RL}_{n}$ is $D_{1}^++\cdots+D_{n-1}^++D_{1}^-+\cdots+D_{n-1}^-$.
The Picard group $\operatorname{Pic}(\mathcal {TL}_{n})$ is freely generated by $H,D^+_1,\cdots,D^+_{n-1},D^-_1,\cdots,D^-_{n-1}$. Moreover, 
\begin{equation*}
\begin{split}
K&=-(n+1) H+\sum\limits_{i=1}^{n-1}\left(\frac{(n+3-i)(n-i)}{2}\right) (D_{i}^- +D_{i}^+)=-\sum\limits_{m=1}^{n-1}B_m-\sum\limits_{i=1}^{n}(D_{i}^- +D_{i}^+)\,.   
\end{split}
\end{equation*}
\end{lemma}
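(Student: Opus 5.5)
The plan is to read everything off the iterated blow-up description of $RL_n$ from Lemma \ref{cpcl}, using the Mille Cr\^epes charts of \S\ref{vander} for the local computations.

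\textbf{Exceptional divisor.} Choose the permutation $\sigma$ adapted to a chart $U_{12\cdots l}$, as in the proof of Proposition \ref{tspnsmoothl}. Lemma \ref{sepal} then shows that, at each stage, the center being blown up is (the strict transform of) a determinantal locus of symmetric matrices. In the Type-I chart $\mathcal A_l$, the proof of Lemma \ref{snc} identifies $D^+_k$ with $\{a_k=0\}$ and $D^-_k$ with $\{b_k=0\}$. Comparing with $\Gamma_l$ shows that $D^\pm_k$ for $k\le n-1$ lies over a locus of codimension at least $2$ in $\mathrm{LG}(n,2n)$, namely the closure of $\mathcal V^\pm$ of the appropriate index. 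By contrast, $D^\pm_n$ is the strict transform of a genuine divisor of $\mathrm{LG}(n,2n)$: it is $\overline{\mathcal V^-_{(1,n-1)}}$ (resp. $\overline{\mathcal V^+_{(n-1,1)}}$), cut out by a single Pl\"ucker coordinate. This proves the first claim.

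\textbf{Picard group.} Since $\mathrm{Pic}(\mathrm{LG}(n,2n))=\mathbb Z H$ and each of the $2(n-1)$ smooth blow-ups adds exactly one free generator, the classes $H,D^\pm_1,\dots,D^\pm_{n-1}$ freely generate $\mathrm{Pic}(\mathcal{TL}_n)$. By Theorem \ref{g4kausz}(A) the centers are irreducible and smooth, so the blow-up formula for Picard groups applies at each step. Reordering the blow-ups via $\sigma$ does not change the resulting classes, because of Lemma \ref{cpcl}.

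\textbf{First expression for $K$.} Apply the discrepancy formula $K_{\widetilde X}=\pi^*K_X+(c-1)E$ for a blow-up along a smooth center of codimension $c$, iterated, together with $K_{\mathrm{LG}(n,2n)}=-(n+1)H$. One must also track how earlier exceptional divisors pull back: the strict transform of $D^\pm_j$ appears in the pullback with multiplicity equal to the multiplicity of the later center along it. The cleanest route is a direct local verification. In $\mathcal A_l$, pull back a generator of $\omega_{U_{12\cdots l}}$, i.e. $\bigwedge dz_{ij}\wedge dx_{ij}\wedge dw_{ij}$, along $\Gamma_l$. The Jacobian of $\Gamma_l$ is a monomial in the $a$'s and $b$'s, since each layer $\prod_{i\le k}a_{n-l+i}\,\Omega_k$ is triangular in the $\xi$'s. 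Its exponent of $a_k$ (resp. $b_k$) gives the coefficient of $D^\pm_k$. The expected coefficient is $c_i-1$ with
$$c_i=\tfrac{(n+1-i)(n+2-i)}{2},$$
where $c_i$ is the codimension in $\mathrm{LG}(n,2n)$ of the center producing $D_i^\pm$. This is consistent with the boundary cases: $c_1=\dim\mathrm{LG}(n,2n)$, since the source $\mathcal V_{(n,0)}$ is a point, and $c_n=1$. Checking that the exponents come out to $(n+3-i)(n-i)/2$ is a routine count once the Jacobian is shown to be monomial.

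\textbf{Second expression for $K$.} Here the $B_m$ should be the colors, i.e. the $B$-stable non-$\mathrm{GL}_n$-stable prime divisors, which are the strict transforms of Schubert-type divisors $\{P_I=0\}$ given by leading principal minors. I would invoke Brion's formula for the anticanonical divisor of a toroidal spherical variety: $-K$ equals the sum of all $G$-stable prime divisors plus $\sum a_D D$ over the colors. Theorem \ref{gwondl}(C) together with Proposition \ref{gbl} provides toroidality. It remains to show that each $a_D=1$ and to express $D^\pm_n$ and $B_m$ in the basis above. For this I would compute each $B_m$ as the pullback of a hyperplane section of $\mathrm{LG}(n,2n)$ minus its vanishing orders along the $D^\pm_i$, read off in $\mathcal A_l$ as the $a,b$-adic valuation of the corresponding minor of $\Gamma_l$. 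The two formulas must then agree. I expect this matching to be the main obstacle: both the color coefficients and the multiplicities of the minors along the boundary divisors have to be tracked carefully. As a cross-check I would test the identity for $n=2$ directly.
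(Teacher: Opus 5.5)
Your proposal is correct. The parts on the exceptional divisor and on $\operatorname{Pic}(\mathcal{TL}_n)$ follow the paper's argument: iterated blow-ups along smooth irreducible centers, with $D^\pm_n$ the strict transforms of Pl\"ucker hyperplane sections. For $K$, however, you take a genuinely different route.

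The paper reads $K$ off the blow-up sequence: first the closures of $\mathcal V^-_{(n,0)},\dots,\mathcal V^-_{(1,n-1)}$, then those of $\mathcal V^+_{(0,n)},\dots,\mathcal V^+_{(n-1,1)}$, each effective center being a smooth strict transform. It uses $K_{\widetilde X}=\pi^*K_X+(c-1)E$ at each step. Since no center lies inside an earlier exceptional divisor, the multiplicity corrections you mention vanish. The coefficient of $D^\pm_i$ is therefore simply $c_i-1=\frac{(n+3-i)(n-i)}{2}$.

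Your Jacobian computation checks this directly. In $\mathcal A_0$, the map $\Gamma_0$ is the $LDL^T$-type factorization $W=\sum_k(b_1\cdots b_k)\,\Xi_k^T\Xi_k$. Its Jacobian is, up to sign, $\prod_k(b_1\cdots b_k)^{n-k}\cdot\prod_i b_i^{\,n-i}=\prod_i b_i^{(n-i)(n-i+3)/2}$, and symmetrically for the $a_i$ in $\mathcal A_n$. This avoids all bookkeeping about intermediate strict transforms. The price is that it relies on the charts being open and on $D^-_k\cap\mathcal A_0=\{b_k=0\}$ from Lemma \ref{snc}. The paper's route is shorter once the blow-up structure is granted.

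For the second expression, Brion's anticanonical formula is more than you need. Suppose you have the first formula and the classes (\ref{bstl}) of the $B_m$, which is exactly what your valuation computation produces. Then:
\begin{itemize}
\item for $i\le n-1$, the coefficient of $D^\pm_i$ in $\sum_{m=1}^{n-1}B_m$ is $-\binom{n-i+1}{2}$;
\item the two divisors $D^\pm_n$ contribute $2H-\sum_{i\le n-1}(n+1-i)(D^-_i+D^+_i)$;
\item adding $\sum_{i\le n-1}(D^-_i+D^+_i)$ gives $(n+1)H-\sum_{i\le n-1}\frac{(n-i)(n-i+3)}{2}(D^-_i+D^+_i)=-K$.
\end{itemize}
So the matching you flag as the main obstacle is a two-line check. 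It yields $a_D=1$ for every color as a by-product, rather than needing it as input.
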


{\noindent\bf Proof of Lemma \ref{excep}.} As in \S \ref{iterated},  we construct $RL_n$ as iterated blow-ups: first blow up  $\overline{\mathcal V_{(n,0)}^-}$, then iteratively the total transforms of $\overline{\mathcal V_{(n-1,1)}^-}$, $\overline{\mathcal V_{(n-2,2)}^-}$, $\cdots$, $\overline{\mathcal V_{(1,n-1)}^-}$; next, iteratively blow up the total transforms of $\overline{\mathcal V_{(0,n)}^+}$, $\overline{\mathcal V_{(1,n-1)}^+}$, $\cdots$, $\overline{\mathcal V_{(n-1,1)}^+}$. Notice that the center of each blow-up is a union of a smooth subscheme and a divisor. We conclude Lemma \ref{excep}.\,\,\,
$\endpf$
\medskip

%Recall that any $(A,\lambda)\in{\rm GL}_n\times\mathbb G_m$ acts on $\mathrm{LG}(n,2n)$ via right matrix multiplication by \begin{equation}\label{glg}\begin{pmatrix}A^{-T}&0\\0&\lambda\cdot A\\\end{pmatrix}.   \end{equation} 
Denote by $B$ the Borel subgroup of ${\rm GL}_n$ consisting of upper triangular matrices. Let $T$  be  a maximal torus of $B$. It is easy to verify that over $\mathbb K$, $\mathcal {TL}_{n}$ is a spherical ${\rm GL}_n$-variety.

Define irreducible divisors of $\mathrm{LG}(n,2n)$ by $b_{k}:=\left\{x\in \mathrm{LG}(n,2n) |\,P_{I_k}(x)=0\,\right\}$, $0\leq k\leq n$, where $I_k:=(k+1,k+2,\cdots,n+k)\in\mathbb I^k_{n}$. Let $B_{k}\subset\mathcal {TL}_{n}$,
$0\leq k\leq n$, be the strict transform of $b_k$ under ${RL}_{n}$. In the Type-I Mille Cr\^epes coordinate chart $\mathcal A_l$, we have  \begin{equation}\label{te}
(RL_{n})^*(P_{I_k})=\left\{ \begin{array}{cl}
    1\,\,\,\,\,\,\,\,\,\,\,\,\,\,&{\rm if}\,\,k=l\\
    \prod\nolimits_{i=1}^{l-k} a^{l-k+1-i}_{n-l+i}&{\rm if}\,\,k\leq l-1\\
    \prod\nolimits_{i=l+1}^{k}b^{k+1-i}_{i}\,\,\,\,\,&{\rm if}\,\,k\geq l+1\,\,\\
    \end{array}\right..
\end{equation}
Hence, \begin{equation}\label{bstl}
\begin{split}
&B_k= H-\sum\nolimits_{i=1}^{n-k}(n-k+1-i) D^+_i-\sum\nolimits_{i=1}^k(k+1-i) D^-_i\,\,\,\,{\rm for}\,\,1\leq k\leq n,\\
&B_0= D^+_n=H-\sum\nolimits_{i=1}^{n-1}(n+1-i) D^+_i,\,\,B_n= D^-_n=H-\sum\nolimits_{i=1}^{n-1}(n+1-i) D^-_i\,.
\end{split}
\end{equation}

Recall that in the theory of spherical varieties, $B$-invariant but not $G$-invariant  divisors are called colors;  a spherical variety is said to be toroidal if every irreducible $B$-stable divisor containing a $G$-orbit is itself $G$-stable (see, for instance, \cite{Per}). 
\begin{proposition}\label{gbl}
The irreducible $G$-invariant divisors of $\mathcal {TL}_{n}$ are $D_{1}^-,  \cdots, D_{n}^-, D_{1}^+, \cdots, D_{n}^+$.
The colors of $\mathcal {TL}_{n}$ are $B_1, \cdots, B_{n-1}$. Moreover, $\mathcal {TL}_{n}$ is toroidal.
\end{proposition}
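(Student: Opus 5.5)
The proof has three parts: identify the $G$-stable divisors, identify the colors, and then check toroidality. Here $G={\rm GL}_n$ acts via $A\mapsto{\rm diag}(A^{-T},A)$ by right multiplication.

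\textbf{$G$-stable divisors.} From the orbit description in the proof of Theorem \ref{gwondl}, the complement of the open $G$-orbit of $\mathcal {TL}_n$ is $D_1^\pm,\dots,D_n^\pm$. The orbit closures are indexed by (\ref{inrulel}), so no other $G$-stable prime divisor exists. The orbit analysis there does not depend on the present proposition, so this part is not circular.

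\textbf{Colors.} A color is a $B$-stable prime divisor meeting the open orbit, so it suffices to classify $B$-stable prime divisors of $\mathrm{LG}(n,2n)$ meeting the open cell $U_0=\{(I\,|\,W)\}$, $W$ symmetric. On $U_0$ the group acts by $W\mapsto A^{T}WA$ up to the sign and transpose convention. The open orbit is $\{\det W\neq 0\}$, which is $\mathcal V^+$-generic, and the open $B$-orbit in it is where the leading principal minors of $W$ are nonzero.

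\begin{itemize}
\item \textbf{Color candidates.} The complement of the open $B$-orbit inside the open $G$-orbit is the union of the divisors $\{\Delta_k(W)=0\}$, where $\Delta_k$ is the $k\times k$ principal minor on the rows and columns $n-k+1,\dots,n$, or on $1,\dots,k$, depending on whether $B$ acts on the left or the right. I would take $1\le k\le n-1$.
\item \textbf{Excluding $k=n$.} The case $\Delta_n=\det W$ gives a $G$-stable divisor, so it does not produce a color.
\item \textbf{Matching with $b_k$.} By the Laplace expansion (Lemma \ref{sepal}), on $U_0$ the Plücker coordinate $P_{I_k}$ with $I_k=(k+1,\dots,n+k)$ is exactly such a $k\times k$ minor of $W$. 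Hence the closure of $\{\Delta_k=0\}$ in $\mathrm{LG}(n,2n)$ is $b_k$, and its strict transform in $\mathcal {TL}_n$ is $B_k$.
\item \textbf{$b_0$ and $b_n$.} Formula (\ref{bstl}) shows $B_0=D_n^+$ and $B_n=D_n^-$. Equivalently, in the chart (\ref{te}) the functions $P_{I_0}$ and $P_{I_n}$ are monomials in the boundary coordinates, so they are not colors.
\item \textbf{Irreducibility.} Each $B_k$ is prime because $\Delta_k$ is an irreducible polynomial on the symmetric matrices of size at least $2$ (for $k=1$ it is linear).
\item \textbf{Count check.} The rank of a symmetric space of type ${\rm GL}_n/{\rm O}_n$ is $n$, and the number of colors is $n-1$, consistent with the Picard rank $2n-1$ from Lemma \ref{excep}.
\end{itemize}

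\textbf{Toroidality.} I must show that no $B_k$, $1\le k\le n-1$, contains a $G$-orbit. Every $G$-orbit closure is an intersection (\ref{inrulel}) and contains a closed orbit. By $G$-stability it therefore suffices to show that for each $G$-orbit $\mathcal O$ there is a point of $\mathcal O$ outside $B_k$, for every $k$.

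Using the Type-I charts $\mathcal A_l$ (which cover $\mathcal{TL}_n$ up to $G$) and (\ref{te}), the pullback $(RL_n)^*P_{I_k}$ equals a monomial $\prod a^{\cdot}\prod b^{\cdot}$ times the local equation of $B_k$. The monomial exponents are exactly the coefficients in (\ref{bstl}), so the local equation of $B_k$ in $\mathcal A_l$ is a unit when $k=l$. I expect a more careful version of this to be needed: the precise statement is that in $\mathcal A_l$ the strict transform $B_k$ is cut out by a function that does not vanish identically on any stratum $\{a_i=0,\ b_j=0\}$. The cleanest route is to pick, for each orbit, the point of $\mathcal A_l$ with all $\xi$, $X$ equal to $0$ and the appropriate $a$'s and $b$'s equal to $0$ or $1$. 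At such a point $W$ and $Z$ are diagonal-like, and one checks directly that the relevant minor after removing the monomial factor is nonzero.

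This explicit verification is the main obstacle. It requires tracking, in the Mille Crêpes parametrization (\ref{ws}), the quotient $P_{I_k}/\bigl(\prod a^{\cdot}\prod b^{\cdot}\bigr)$ at these base points, and it requires that the orientation of $B$ matches the choice of minors (lower-right versus upper-left). If the naive base point fails, I would translate it by a suitable element of $G$ (a permutation matrix), using $G$-stability of the orbit.
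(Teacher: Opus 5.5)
Your treatment of the $G$-stable divisors and of the colors is sound, and it is essentially the paper's argument made explicit. The paper identifies the colors by observing that any further $B$-stable divisor would map to a $B$-stable divisor of $\mathrm{LG}(n,2n)$ meeting the open $B$-orbit through $(I_{n\times n}\,\,I_{n\times n})$. Your description of that orbit inside $\{\det W\neq 0\}$ by leading principal minors is a correct way to fill this in. Since $(I\,|\,W)\cdot{\rm diag}(A^{-T},A)$ spans the same point as $(I\,|\,A^TWA)$ with $A$ upper triangular, these are the upper-left minors, i.e.\ $\pm P_{I_k}$ on $U_0$, so there is no orientation ambiguity.

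The gap is in the toroidality step. You leave it open as ``the main obstacle'' and only assert that the local equation of $B_k$ on $\mathcal A_l$ is a unit when $k=l$, which is the trivial case $P_{I_l}=\pm1$. As written, toroidality is not proved.

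The missing observation is that (\ref{te}) already settles every $k$ and $l$. It states that $(RL_n)^*P_{I_k}$ on $\mathcal A_l$ is \emph{exactly} the monomial $\prod_{i=1}^{l-k}a_{n-l+i}^{\,l-k+1-i}$ or $\prod_{i=l+1}^{k}b_i^{\,k+1-i}$, with no residual factor. Formula (\ref{bstl}) was obtained precisely by reading these exponents as the multiplicities of $D^+_{n-l+i}=\{a_{n-l+i}=0\}$ and $D^-_i=\{b_i=0\}$; note that $l-k+1-i=n-k+1-(n-l+i)$ is independent of $l$. Trivialize $H$ on $\mathcal A_l$ by $P_{I_l}$. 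Then the local equation of the strict transform $B_k$ on $\mathcal A_l$ is identically $1$, so $B_k\cap\mathcal A_l=\emptyset$ for all $1\le k\le n-1$ and all $0\le l\le n$.

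Since the Type-I charts cover $\mathcal{TL}_n$ up to the $G$-action in characteristic $\neq 2$, every $G$-orbit meets some $\mathcal A_l$. Hence no color $B_k$ contains a $G$-orbit, which is exactly the paper's proof. No base-point computation, stratum-by-stratum non-vanishing check, or translation by permutation matrices is needed. Your fallback would succeed, since the quotient equals $1$ everywhere on the chart, but you did not carry it out.
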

{\noindent\bf Proof of Proposition \ref{gbl}.} %The first statement holds by Theorem \ref{gwondl}.
If $\mathfrak D$ is a $B$-invariant divisor not among those listed above,  $RL_n(\mathfrak D)$ is a $B$-invariant divisor  containing the point $(I_{n\times n}\,\,I_{n\times n})\in \mathrm{LG}(n,2n)$, which is a contradiction.

By Theorem \ref{gwondl}, each $G$-invariant closed subvariety of  $\mathcal {TL}_{n}$ has an open subset contained in the Mille Cr\^epes coordinate chart $\mathcal A_l$, for a certain $0\leq l\leq n$. On the other hand, $B_k\cap \mathcal A_l=\emptyset$ for $1\leq k\leq n-1$, by (\ref{te}) and (\ref{bstl}). Therefore, $B_k$ contains no $G$-orbit. \,\,\,$\endpf$
\medskip

In our case, there is a more geometric interpretation of the colors $B_k$ as follows. For $1\leq j\leq {n-1}$, denote by $h_j$ the pullback of the positive generator of $\operatorname{Pic}(\mathbb {P}^{N^j_{n}})$ to $\mathbb {P}^{N_{n}}\times\mathbb {P}^{N^0_{n}}\times\cdots\times\mathbb {P}^{N^n_{n}}$. Let $H_j$ be its restriction to $\mathcal {TL}_{n}$. By computing the Pl\"ucker coordinate functions $\{P_{I}\}_{I\in\mathbb I^k_{n}}$ in $\mathcal A_0$ and $\mathcal A_n$, we can conclude that $H_k=B_k$ for all $1\leq k\leq n-1$.

In what follows, we assume  $\mathbb K$ is of characteristic $0$. Recall from \cite{Br3,Br5,Per} that
\begin{theorem}\label{sph}For a smooth complete spherical variety $X$ with a Borel subgroup $B$, $\operatorname{Pic}(X) = N^1(X)$ %the nef and globally generated line bundles agree on $X$, 
and any effective cycle is rationally equivalent to an effective $B$-stable cycle.
\end{theorem}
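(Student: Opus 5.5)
The statement is Theorem~\ref{sph}: for a smooth complete spherical variety $X$, $\operatorname{Pic}(X)=N^1(X)$, and every effective cycle is rationally equivalent to an effective $B$-stable cycle. I would prove the two assertions separately, in the reverse order, since the cycle statement feeds into the Picard statement.

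\textbf{Step 1: $B$-stable representatives.} I would use the Hirschowitz-type argument that a connected solvable group acting on a complete variety can degenerate any effective cycle to a $B$-stable one within its rational equivalence class.

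First, the Chow variety (or Hilbert scheme) parametrizing effective cycles of a fixed class is projective, and $B$ acts on it. Take an effective cycle $Z$ and its orbit closure $\overline{B\cdot[Z]}$. By Borel's fixed point theorem, this closure contains a $B$-fixed point $[Z_0]$.

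It remains to see that $Z_0$ is rationally equivalent to $Z$. Because $B\cong T\ltimes U$ is connected solvable, I can reach $[Z_0]$ from $[Z]$ by successively taking limits along one-parameter subgroups $\mathbb G_m$ or $\mathbb G_a$ inside $B$. Each such limit is a family over $\mathbb P^1$, so each step preserves the rational equivalence class.

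\textbf{Step 2: $\operatorname{Pic}(X)=N^1(X)$.} Here I would show that numerical equivalence coincides with linear equivalence.

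By Step 1, applied to curves, $N_1(X)$ is generated by $B$-stable curves. Applied to divisors, $\operatorname{Pic}(X)$ is generated by the finitely many $B$-stable prime divisors; $\operatorname{Pic}(X)$ agrees with the divisor class group because $X$ is smooth. So $\operatorname{Pic}(X)$ is a finitely generated group.

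Next, I would show $\operatorname{Pic}(X)$ is torsion-free and that $\operatorname{Pic}^0(X)=0$:
\begin{itemize}
\item Since $X$ has an open $B$-orbit isomorphic to $\mathbb A^a\times\mathbb G_m^b$, we get $H^1(X,\mathcal O_X)=0$, as $X$ is rational.
\item Using Brion's description, $\operatorname{Pic}(X)$ is the quotient of the free group on the $B$-stable divisors by the divisors of $B$-eigenfunctions $f\in\mathbb K(X)^{(B)}$.
\end{itemize}

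Finally, suppose a line bundle $L$ is numerically trivial. I would show it is trivial as follows.
\begin{itemize}
\item Because $\operatorname{Pic}(X)$ is discrete and finitely generated, some power $L^{m}$ is algebraically equivalent to zero, hence trivial since $\operatorname{Pic}^0(X)=0$.
\item Then $L$ itself is torsion. Torsion line bundles correspond to étale covers, and $X$ is simply connected (rational, smooth, complete; in characteristic $0$). So $L$ is trivial.
\end{itemize}

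\textbf{Main obstacle.} The hardest step is showing that numerically trivial implies trivial without appealing to topology. If the topological route is not available, I would use Brion's intersection pairing: the $B$-stable divisors pair against the $B$-stable curves, and I would check that this pairing is perfect via the local structure theorem on the toroidal charts.
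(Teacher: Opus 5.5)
The paper gives no proof of this statement; it quotes it from \cite{Br3,Br5,Per}, so your proposal has to stand on its own.

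Step~1 is Hirschowitz's argument in substance, but as written it does not work. Nothing you say shows that the particular fixed point $[Z_0]$ produced by Borel's theorem is actually reached. Moreover, taking limits along arbitrary one-parameter subgroups can destroy invariance gained at an earlier stage. The standard fix is an induction along a subgroup chain:
\begin{itemize}
\item Take closed connected subgroups $B=B_0\supset B_1\supset\cdots\supset B_r=\{1\}$ with $B_{i+1}$ normal in $B_i$ and $B_i/B_{i+1}\cong\mathbb G_a$ or $\mathbb G_m$.
\item Suppose the cycle is already $B_{i+1}$-stable. Then $B_i/B_{i+1}$ acts on the closed locus of $B_{i+1}$-stable cycles.
\item Extend the orbit map to $\mathbb P^1$. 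Its value at $\infty$ is a $B_i$-stable cycle in the same rational equivalence class.
\end{itemize}
Borel's theorem is then not needed. Note also that the Hilbert-scheme argument uses projectivity of $X$, not just completeness.

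The genuine gap is in Step~2, in the sentence ``because $\operatorname{Pic}(X)$ is discrete and finitely generated, some power $L^m$ is algebraically equivalent to zero.'' You have already arranged $\operatorname{Pic}^0(X)=0$. Given that, this sentence says exactly that every numerically trivial line bundle is torsion, i.e.\ that $\ker(\operatorname{Pic}(X)\to N^1(X))$ is torsion. That is the whole non-formal content of the claim. Finite generation of $\operatorname{Pic}(X)$ says nothing about this kernel; a numerically trivial class could a priori generate a free summand.

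The missing input is Matsusaka's theorem: a numerically trivial divisor has a nonzero multiple algebraically equivalent to zero. With it, your ending works. $L$ is then torsion, so it defines a $\mu_m$-torsor. That torsor is trivial because a smooth complete rational variety in characteristic $0$ has trivial \'etale fundamental group. Note that Brion's presentation of $\operatorname{Pic}(X)$ by $B$-stable divisors modulo divisors of $B$-eigenfunctions does not by itself exclude torsion; it is the $\pi_1$ argument that does.

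Alternatively, and closer to your fallback, use the Kronecker duality of Fulton--MacPherson--Sottile--Sturmfels. For a complete variety with finitely many $B$-orbits, $A^1(X)\to\operatorname{Hom}(A_1(X),\mathbb Z)$ is an isomorphism. For $X$ smooth, $A^1(X)=\operatorname{Pic}(X)$, which gives the result directly. Two corrections to your fallback as stated:
\begin{itemize}
\item What is needed is injectivity of $\operatorname{Pic}(X)\to\operatorname{Hom}(A_1(X),\mathbb Z)$, not a perfect pairing between $B$-stable divisors and $B$-stable curves; the $B$-stable divisors satisfy relations.
\item This cannot be checked on toroidal charts in general, because smooth complete spherical varieties need not be toroidal. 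In $\mathbb P^2=\mathbb P(\mathbb K\oplus\mathbb K^2)$ with ${\rm SL}_2$ acting on $\mathbb K^2$, the color $\{y=0\}$ contains the closed orbit $\{[1:0:0]\}$.
\end{itemize}
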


By (\ref{bstl}) and Proposition \ref{gbl}, we can conclude that
\begin{proposition}\label{effl}
$\operatorname{Eff}(\mathcal{TL}_{n})$ is generated by $\{D_{1}^+,D_{2}^+,\cdots,D_n^+,D_{1}^-,D_{2}^-,\cdots,D_n^-\}$.   
\end{proposition}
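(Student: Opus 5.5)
By Theorem \ref{sph}, every effective divisor on $\mathcal{TL}_n$ is linearly equivalent to an effective $B$-stable divisor. So $\operatorname{Eff}(\mathcal{TL}_n)$ is generated by the classes of the irreducible $B$-stable prime divisors. By Proposition \ref{gbl}, these are exactly the $G$-stable divisors $D_1^\pm,\dots,D_n^\pm$ together with the colors $B_1,\dots,B_{n-1}$. It is enough to check that no other $B$-stable prime divisor occurs. This is the first part of the proof of Proposition \ref{gbl}: a further $B$-stable prime divisor would push forward under $RL_n$ to a $B$-stable divisor through $(I_{n\times n}\,\,I_{n\times n})$, and that point has a dense $B$-orbit. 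The cone is therefore generated by these $3n-1$ classes, and the plan is to show that each color $B_k$ with $1\le k\le n-1$ is a nonnegative combination of the $D_i^\pm$.

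To do this, I use (\ref{bstl}), which in $\operatorname{Pic}(\mathcal{TL}_n)$ (free on $H,D_1^\pm,\dots,D_{n-1}^\pm$ by Lemma \ref{excep}) gives
\[
H=D_n^++\sum\nolimits_{i=1}^{n-1}(n+1-i)D_i^+ \quad\text{and}\quad H=D_n^-+\sum\nolimits_{i=1}^{n-1}(n+1-i)D_i^-.
\]
Take a convex combination $H=\frac{k}{n}\big(D_n^++\sum_i (n+1-i)D_i^+\big)+\frac{n-k}{n}\big(D_n^-+\sum_i(n+1-i)D_i^-\big)$ with weights still to be chosen, and substitute it into $B_k=H-\sum_{i=1}^{n-k}(n-k+1-i)D_i^+-\sum_{i=1}^{k}(k+1-i)D_i^-$. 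Checking nonnegativity coefficientwise reduces to two inequalities: for $i\le n-k$ one needs $\lambda(n+1-i)\ge n-k+1-i$, and for $i\le k$ one needs $(1-\lambda)(n+1-i)\ge k+1-i$, where $\lambda$ is the weight on the $+$ expression.

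The choice $\lambda=\frac{n-k}{n}$ works, since both inequalities reduce to a trivial comparison. For example, $\frac{n-k}{n}(n+1-i)-(n-k+1-i)=\frac{k(i-1)}{n}\ge 0$, and symmetrically for the other. This gives $n B_k\in\sum_i\mathbb Z_{\ge0}D_i^\pm$, so $B_k$ lies in the cone spanned by the boundary divisors. Because $N^1=\operatorname{Pic}$ by Theorem \ref{sph}, the cone is generated by the $D_i^\pm$.

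The main obstacle is justifying the complete list of $B$-stable prime divisors and finding the correct weighting. If any coefficient turned out negative, I would instead look for an effective decomposition of $B_k$ directly, for instance by factoring $P_{I_k}$ on a chart.
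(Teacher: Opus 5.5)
Your proposal is correct and follows the paper's argument: Theorem~\ref{sph} reduces $\operatorname{Eff}(\mathcal{TL}_n)$ to $B$-stable prime divisors, Proposition~\ref{gbl} identifies these as the $D_i^{\pm}$ and the colors $B_1,\dots,B_{n-1}$, and (\ref{bstl}) places each color in the cone of the $D_i^{\pm}$ — a step the paper leaves implicit and your identity $nB_k=\sum_{i\le n-k}k(i-1)D_i^{+}+\sum_{i>n-k}(n-k)(n+1-i)D_i^{+}+\sum_{i\le k}(n-k)(i-1)D_i^{-}+\sum_{i>k}k(n+1-i)D_i^{-}$ makes explicit. One slip: your displayed convex combination puts weight $\frac{k}{n}$ on the $+$ expression, which would give $D_1^{+}$ the coefficient $2k-n<0$ when $k<n/2$; the value $\lambda=\frac{n-k}{n}$ that you actually verify is the correct one.
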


Next, we define certain $T$-invariant curves in $\mathcal {TL}_{n}$ as the closures of affine lines in the Mille Cr\^epes coordinate charts $\mathcal A_l$. 
For $0\leq l\leq n-1$,  let $\gamma_l$ be the closure of the affine line $\mathring{\gamma}_l:\mathbb K\rightarrow \mathcal A_l$ defined by setting $b_{l+1}\left(\mathring{\gamma}_l(t)\right)=t$ and all other coordinates to zero. 
For $1\leq j\leq n-1$, let $\zeta^-_j$ (resp. $\zeta^+_j$) be the closure of $\mathring{\zeta}^-_j\subset \mathcal A_0$ (resp. $\mathring{\zeta}^+_j\subset \mathcal A_n$) defined by setting
$\xi_{j(j+1)}(\mathring{\zeta}^-_j(t))=t$ (resp. $\xi_{(n-j)(n+1-j)}(\mathring{\zeta}^+_j(t))=t$) and all the other coordinates to zero.

\begin{lemma}\label{i1l} For $0\leq l\leq n-1$, the non-zero intersection numbers of $\gamma_l$ with the generators of the Picard group are 
\begin{equation*}%\label{cone1}
\left\{
\begin{array}{ll}
   \gamma_l\cdot H=1\,\,\,\, \\
  \gamma_l\cdot D^-_{i}=\left\{ \begin{array}{cl}
       1\,\,\,\,&   i=l+1\,\,{\rm and\,\,}1\leq i\leq n-1 \\
       -1\,\,\,\,& i=l+2\,\,{\rm and\,\,}1\leq i\leq n-1 \\
  \end{array}\right.\\
  \gamma_l\cdot D^+_{i}=\left\{ \begin{array}{cl}
       1\,\,\,\,&   i=n-l\,\,{\rm and\,\,} 1\leq i\leq n-1 \\
       -1\,\,\,\,& i=n-l+1\,\, {\rm and\,\,} 1\leq i\leq n-1 \\
  \end{array}\right.   &  \\
\end{array}\right.;
\end{equation*}
for $1\leq j\leq n-1$, the non-zero intersection numbers of $\zeta^-_j$ (resp. $\zeta^+_j$) are 
\begin{equation*}%\label{cone2}
\zeta^-_j\cdot D^-_{i}\,\,({\rm resp}.\,\,\zeta^+_j\cdot D^+_{i}) =\left\{ \begin{array}{cl}
       -1\,\,\,\,&   i=j\\
       2\,\,\,\,& i=j+1\,\,{\rm and\,\,}1\leq i\leq n-1\\
       -1\,\,\,\,&   i=j+2\,\,{\rm and\,\,}1\leq i\leq n-1 \\
  \end{array}\right..
\end{equation*}
\end{lemma}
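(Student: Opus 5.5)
Since $\gamma_l$, $\zeta^\pm_j$ are closures of coordinate affine lines in Mille Cr\^epes charts, I would compute each intersection number as the degree of the restricted line bundle on a $\mathbb P^1$. Each curve is $\mathbb P^1$: the open part $t\in\mathbb K$ lies in one chart, and the point at $t=\infty$ lies in an adjacent chart. For a divisor $D$ meeting the curve properly, $\gamma\cdot D$ is the sum of the vanishing orders of local equations at $t=0$ and $t=\infty$. For divisors containing the curve, I would instead use the linear equivalences (\ref{bstl}) to replace them by the colors $B_k$ or by $H$, and compute those via the Pl\"ucker functions (\ref{te}).

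\emph{Step 1, $H\cdot\gamma_l$.} On $\mathring\gamma_l\subset\mathcal A_l$ all coordinates vanish except $b_{l+1}=t$. By (\ref{ws}) the image under $RL_n$ is the matrix with $Z=0$, $X=0$ and $W=t\,E_{11}$, where $E_{11}$ is the matrix unit in the top corner of the $W$-block. Its Pl\"ucker coordinates are $1$ and $t$ (up to sign) and zero otherwise. Hence $RL_n(\gamma_l)$ is a line, and $\gamma_l\cdot H=1$.

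For $\zeta^\pm_j$, the same computation in $\mathcal A_0$ gives $W=\sum_k(\prod b)\Omega_k$. With all $b$'s set to $0$ this matrix vanishes, so $\zeta_j^\pm$ is contracted by $RL_n$ and $H\cdot\zeta^\pm_j=0$.

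\emph{Step 2, boundary divisors via charts.} By Lemma \ref{snc}, in $\mathcal A_l$ one has $D^-_k=\{b_k=0\}$ for $k\ge l+1$ and $D^+_k=\{a_k=0\}$ for $k\ge n-l+1$. Thus $\mathring\gamma_l$ lies in $D^-_k$ for $k\ge l+2$ and in $D^+_k$ for $k\ge n-l+1$, and it meets $D^-_{l+1}$ transversally at $t=0$.

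As $t\to\infty$, I would identify the limit point in $\mathcal A_{l+1}$. The relevant change of coordinates is $b_{l+1}\leftrightarrow a_{n-l}^{-1}$, and the chart change multiplies the next variables: $b_{l+2}$ in $\mathcal A_l$ corresponds to $a_{n-l}\cdot(\ldots)$ in $\mathcal A_{l+1}$, and so on. This is the Gaussian-elimination step of the construction in the proof of Proposition \ref{tspnsmoothl}. From it, the point at infinity lies transversally on $D^+_{n-l}=\{a_{n-l}=0\}$, which gives the $+1$.

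The $-1$ entries should come from the normal bundle of $\gamma_l$ inside $D^-_{l+2}$ and $D^+_{n-l+1}$. Rather than compute these directly, I would read them off from the color relations. By (\ref{te}), $B_k$ is disjoint from both charts, so $\gamma_l\cdot B_k=0$ for $1\le k\le n-1$. Taking second differences of the identities $B_k=H-\sum(n-k+1-i)D^+_i-\sum(k+1-i)D^-_i$ then forces the remaining numbers. In particular, $\gamma_l\cdot B_k=0$ for all $k$ together with $\gamma_l\cdot H=1$ pins down the whole vector $(\gamma_l\cdot D^\pm_i)$. I would check consistency with $B_0=D^+_n$ and $B_n=D^-_n$, and also use the $\mathbb G_m$-weight: $\gamma_l$ is the orbit closure joining $\mathcal D_{(n-l,l)}$ to $\mathcal D_{(n-l-1,l+1)}$.

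\emph{Step 3, $\zeta^-_j$.} In $\mathcal A_0$ with all $b_i=0$, the curve $\mathring\zeta^-_j$ lies in $D^-_1,\dots,D^-_n$, so the direct approach fails for every $D^-_i$. I would use that $\zeta^-_j$ is disjoint from the closure of the other chart region in the $+$ direction, so $\zeta^-_j\cdot D^+_i=0$. Its intersection with $H_k=B_k$ can be computed from the Pl\"ucker coordinates $P_I$, $I\in\mathbb I^k_n$, viewed as functions of $\xi_{j(j+1)}=t$. Only $f^j$ varies, linearly in $t$ through $\Xi_j$, so $\zeta^-_j\cdot B_k=\delta_{jk}$. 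Inverting the triangular system (\ref{bstl}) with $H\cdot\zeta=0$ and $D^+\cdot\zeta=0$ yields $\zeta^-_j\cdot D^-_i$. This is the second-difference pattern $-1,2,-1$, namely the negative of the inverse of the Cartan-type matrix $(k+1-i)$. The $+$ case follows by the symmetry $l\leftrightarrow n-l$ exchanging $a$'s and $b$'s.

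\emph{Main obstacle.} The main obstacle is Step 3's claim $\zeta^-_j\cdot B_k=\delta_{jk}$. It requires checking, at the point at infinity of $\zeta^-_j$ (which lands in a chart $\mathcal A_0$ with permuted basis $e_j\leftrightarrow e_{j+1}$), that exactly one projection $f^j$ has degree one and all other $f^k$ are constant. I would first verify this explicitly for $n=2,3$ using the examples above, then argue in general by the $\mathrm{GL}_2$ acting on $e_j,e_{j+1}$.
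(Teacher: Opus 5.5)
Your argument for $\gamma_l$ is sound and essentially matches the paper's. $RL_n(\gamma_l)$ is a line. Since $\gamma_l\subset\mathcal A_l\cup\mathcal A_{l+1}$, you get $\gamma_l\cdot B_k=\gamma_l\cdot H_k=0$ for $1\le k\le n-1$, and then you solve the triangular system (\ref{bstl}). One correction: the color equations together with $\gamma_l\cdot H=1$ do \emph{not} pin down the vector by themselves, since they are $n-1$ equations in $2n-2$ unknowns. You also need $\gamma_l\cdot D^-_i=0$ for $i\le l$ and $\gamma_l\cdot D^+_i=0$ for $i\le n-l-1$. These follow from Lemma \ref{snc} applied in $\mathcal A_l$ and $\mathcal A_{l+1}$, and this disjointness is exactly the extra input in the paper's proof. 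Your transversality computations at $t=0,\infty$ are consistent with this but not needed.

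The $\zeta$ part fails at the claim $\zeta^-_j\cdot B_k=\delta_{jk}$. The row $\Xi_j$ enters $W$ through $\Omega_j=\Xi_j^T\Xi_j$, so $f^j$ is quadratic, not linear, along $\zeta^-_j$.
\begin{itemize}
\item In $\mathcal A_0$ one has $W=\Xi^T\operatorname{diag}(b_1,b_1b_2,\dots,b_1\cdots b_n)\,\Xi$, where $\Xi$ is the unitriangular matrix with rows $\Xi_k$.
\item By Cauchy--Binet, the $j\times j$ minor of $W$ on rows $I$ and columns $J$, divided by $\prod_{i\le j}b_i^{\,j+1-i}$, restricts on $\{b_1=\cdots=b_n=0\}$ to $\det\Xi_{[j],I}\cdot\det\Xi_{[j],J}$.
\item So the $j$-th component of such a boundary point is the symmetric square of the Pl\"ucker vector of $\langle\Xi_1,\dots,\Xi_j\rangle$. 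Along $\zeta^-_j$ that vector is $e_{[j]}+t\,e_{[j-1]\cup\{j+1\}}$.
\item Hence $f^j|_{\zeta^-_j}=[1:t:t:t^2:0:\cdots:0]$ is a conic, and $\zeta^-_j\cdot B_j=\zeta^-_j\cdot H_j=2$.
\item Equivalently, use the chart $\mathcal A_0$ with $e_j,e_{j+1}$ interchanged, where the point at infinity is $s=1/t=0$. There the local equation of $B_j$ restricts to $s^2$, so $B_j$ is tangent to $\zeta^-_j$.
\end{itemize}
Now feed in $\zeta^-_j\cdot B_k=2\delta_{jk}$ and $\zeta^-_j\cdot H=\zeta^-_j\cdot D^+_i=0$. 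Your triangular inversion then gives $\zeta^-_j\cdot D^-_i=-2,\,4,\,-2$ for $i=j,j+1,j+2$, and symmetrically for $\zeta^+_j$.

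So no repair of this step can reach the displayed values for $\zeta^\pm_j$. For the curves as defined, those values are off by a factor of $2$; they are the intersection numbers of $\tfrac12[\zeta^\pm_j]$. The case $n=2$ shows this directly:
\begin{itemize}
\item $\mathcal{TL}_2$ is the blow-up of $\mathrm{LG}(2,4)$ at the source and the sink.
\item $D^-_1=\{b_1=0\}\subset\mathcal A_0$ is the exceptional $\mathbb P(\mathrm{Sym}^2V^*)\cong\mathbb P^2$, with normal bundle $\mathcal O(-1)$ and affine coordinates $w_{12}/w_{11}=\xi_{12}$, $w_{22}/w_{11}=\xi_{12}^2+b_2$.
\item $\zeta^-_1=\{b_2=0\}$ is the conic $[1:t:t^2]$ of rank-one forms, so $\zeta^-_1\cdot D^-_1=-2$ rather than $-1$.
\end{itemize}
The paper's proof handles the $\zeta$'s with ``computed similarly'' and carries the same discrepancy. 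It is harmless later, because Lemma \ref{n1}, Proposition \ref{nefcl} and Theorem \ref{fano} use these curves only up to positive multiples. The hand check for $n=2,3$ that you planned would have exposed it.
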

{\bf\noindent Proof of Lemma \ref{i1l}.} We view ${\gamma}_l$ as a curve in $\mathbb {P}^{N_{n}} \times\mathbb{P}^{N^0_{n}} \times\cdots\times\mathbb {P}^{N^n_{n}}$. 
By computing the Pl\"ucker coordinate functions, it is easy to verify that ${\gamma}_l\cdot H=1$ and ${\gamma}_l\cdot H_k=0$, $1\leq k\leq n-1$, for the projection of ${\gamma}_l$ to $\mathbb {P}^{N_{n}}$ is a line and that to any $\mathbb {P}^{N^j_{n}}$ is a point.
Noticing that $\gamma_l$ is disjoint with the divisors  $D_1^-,D^-_2,\cdots,D^-_l,D^+_1,D^+_2,\cdots,D^+_{n-l-1}$, we conclude Lemma \ref{i1l} by solving linear equations. The intersection numbers of $\zeta^-_j$, $\zeta^+_j$ can be computed similarly.\,\,\,\,$\endpf$
%\medskip

%For any variety $X$, denote by $Z_1(X)$ the group of one-dimensional cycles on $X$ and by $N_1(X)$ its quotient by numerical equivalence.  In $N_1(X)\otimes_{\mathbb Z}\mathbb R$, denote by $NE(X)$ the cone spanned by the classes of effective curves. We have

\begin{lemma}\label{n1}
The cone of effective curves $\operatorname{NE}(\mathcal {TL}_n)$ is generated by $\{\gamma_l\}_{0\leq l\leq n-1}\cup\{\zeta^{\pm}_j\}_{j=1}^{n-1}$.
\end{lemma}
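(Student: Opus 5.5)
The plan is to use Theorem \ref{sph} together with the structure of spherical varieties. By that theorem, every effective curve on $\mathcal{TL}_n$ is rationally equivalent to an effective $B$-stable $1$-cycle. So $\operatorname{NE}(\mathcal{TL}_n)$ is generated by classes of $B$-stable irreducible curves. Since $\mathcal{TL}_n$ is toroidal (Proposition \ref{gbl}), such a curve is either contained in a closed ${\rm GL}_n$-orbit or lies in a $G$-orbit closure of dimension at least one.

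A cleaner route is duality. Let $C$ be the cone generated by $\{\gamma_l\}\cup\{\zeta^\pm_j\}$. It suffices to show $C^\vee\subset\operatorname{Nef}(\mathcal{TL}_n)$, i.e.\ every divisor nonnegative on these curves is nef; then $\operatorname{NE}=\operatorname{Nef}^\vee\subset C^{\vee\vee}=C$, and the reverse inclusion is trivial. To get there, I will invoke Brion's criterion: on a complete spherical variety, a Cartier divisor is nef (equivalently globally generated) iff its intersection with every $B$-stable curve in each closed orbit, together with the curves attached to the closed orbits, is nonnegative. For toroidal varieties the relevant curves are of two kinds. The first kind are curves in closed $G$-orbits joining $T$-fixed points, namely $P^1$-curves in the flag-type closed orbits. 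The second kind are $T$-stable curves through a $T$-fixed point of a closed orbit that are transversal to it, i.e.\ lying in the intersection of all boundary divisors but one.

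Using Theorem \ref{gwondl}(C), the closed ${\rm GL}_n$-orbits correspond to maximal $(I^+,I^-)$ with $\min I^++\min I^-= n+2$. In a Type-I chart $\mathcal A_l$ around such an orbit, the transversal $T$-curves are the coordinate lines in $a_i$, $b_i$. The line in $b_{l+1}$ (and symmetrically $a_{n-l+1}$) is exactly $\gamma_l$, up to the ${\rm GL}_n$-action or the Weyl-group symmetry. The curves inside closed orbits are coordinate lines in the $\xi$-variables; by equivariance and the Weyl action these reduce to $\zeta^\pm_j$. Lines in other $a_i$ or $b_i$ should be numerically combinations of these, which I will check using Lemma \ref{i1l} and the chart formulas (\ref{te}).

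The key step is then to compute, for each remaining $T$-stable curve type (other coordinate lines in the charts $\mathcal A_l$), its intersection numbers with $H, D^\pm_i$. This is done as in Lemma \ref{i1l}: project to the factors $\mathbb P^{N^k_n}$ and use the local equations $D^\pm_k=\{a_k=0\},\{b_k=0\}$. The goal is to express each class as a nonnegative combination of the $\gamma_l,\zeta^\pm_j$.

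I expect the main obstacle to be the claim that the curves inside closed orbits reduce to $\zeta^\pm_j$ alone. The closed orbits are products of partial flag varieties of ${\rm GL}_n$, and their Schubert curves must be identified with $\zeta^-_j$ or $\zeta^+_j$ depending on which side (the $D^-$ or $D^+$ stratum) they come from. I would handle this by restricting to $D_1^-\cong\mathcal{ML}_n$ (the complete quadrics), where the $\zeta^-_j$ are the known generators of the Mori cone of the wonderful variety (the cone is simplicial, spanned by the curves dual to the simple roots). I would then transport this description via $\mathcal{PL}_n$ and the isomorphism $D_1^+\cong D_1^-$.
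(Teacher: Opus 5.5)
Your duality argument combined with Brion's nefness criterion for toroidal spherical varieties is a legitimate refinement of the paper's argument. The criterion tests Schubert curves in the closed orbits and the coordinate lines of the toric slices at the closed orbits. The paper instead invokes Theorem \ref{sph} and enumerates the $T$-stable coordinate lines in all Mille Cr\^epes charts, $X$-directions included. The slice curves you defer are harmless:
\begin{itemize}
\item in $\mathcal A_l$ the $b_{l+1}$-line is $\gamma_l$;
\item the $a_{n-l+1}$-line is $\gamma_{l-1}$, not $\gamma_l$;
\item the remaining $a_k$- and $b_k$-lines are contracted by $RL_n$ and are positive multiples of suitable $\zeta^{\pm}_j$.
\end{itemize}

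\textbf{The gap.} It lies in the step you flag yourself: your treatment of the curves in the closed orbits. The closed orbits are $Y_l=\{a=b=0\}\subset\mathcal A_l$ for $0\le l\le n$. Each is a full flag variety ${\rm GL}_n/B$ with coordinates $X$ \emph{and} the $\xi$'s, not a product of partial flag varieties. Only for $Y_0\subset D_1^-$ and $Y_n\subset D_1^+$ are the Schubert curves literally the $\zeta^-_j$, resp.\ $\zeta^+_j$.

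For $1\le l\le n-1$, $RL_n$ maps $Y_l$ onto the fixed component $\mathcal V_{(n-l,l)}\cong{\rm Gr}(n-l,V)$, via $A\mapsto A^\perp\oplus A$. On this component $H$ restricts to twice the Pl\"ucker class. Consider the Schubert curve $C_l\subset Y_l$ that is not contracted by $Y_l\to\mathcal V_{(n-l,l)}$; in the chart it is the $x_{l(l+1)}$-line. It maps to a conic, so $H\cdot C_l=2$, whereas $H\cdot\zeta^{\pm}_j=0$ for all $j$. Hence $C_l$ is not a combination of the $\zeta^\pm_j$. No equivariance or Weyl argument can relate it to them: ${\rm GL}_n$ preserves each $Y_l$, and the $V\leftrightarrow V^*$ symmetry only exchanges $Y_l$ and $Y_{n-l}$. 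This already happens for $n=2$: $\mathcal{TL}_2$ is the blow-up of the quadric threefold $\mathrm{LG}(2,4)$ at $V\oplus 0$ and $0\oplus V^*$, and $C_1$ is a conic through neither point.

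\textbf{Why your remedy fails.} First, $Y_l\not\subset D_1^{\pm}$, so restricting to $D_1^-$ says nothing about $C_l$. Second, $\mathcal{PL}_n$ contracts all of $\gamma_0,\dots,\gamma_{n-1}$, and these span the kernel of $(\mathcal{PL}_n)_*$. So pushing forward by $\mathcal{PL}_n$ determines $[C_l]$ only modulo that span. It gives no information on the signs of the $\gamma$-coefficients, which is exactly what has to be shown.

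\textbf{What is needed.} A direct computation in the style of Lemma \ref{i1l}:
\begin{itemize}
\item along $C_l$ the maps $f^k$ with $k\neq l$ are constant and $f^l$ traces a conic, so $H_k\cdot C_l=2\delta_{kl}$;
\item $C_l$ misses $D^-_i$ for $i\le l$ and $D^+_i$ for $i\le n-l$;
\item solving with (\ref{bstl}) and $H_k=B_k$ gives $C_l\equiv 2\gamma_l+c\,\zeta^+_{n-l}\equiv 2\gamma_{l-1}+c\,\zeta^-_l$ for some $c>0$.
\end{itemize}
The $\xi$-direction Schubert curves of $Y_l$ likewise need this computation rather than equivariance; they come out as positive multiples of the $\zeta^\pm_j$. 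With these classes in hand, your duality argument closes.
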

{\bf\noindent Proof of Lemma \ref{n1}.} By Theorem \ref{sph}, it suffices to consider $T$-invariant curves. Notice that each  $T$-invariant curve has a single non-zero coordinate in the Mille Cr\^epes coordinate charts. By certain permutations, we can enumerate the generators of the classes of effective curves.  \,\,\,\endpf

\begin{proposition}\label{nefcl}
The nef cone of $\mathcal {TL}_n$ is generated by (\ref{nefll}).
\end{proposition}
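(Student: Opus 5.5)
Since $\operatorname{NE}(\mathcal{TL}_n)$ is generated by $\{\gamma_l\}_{0\leq l\leq n-1}\cup\{\zeta^\pm_j\}_{1\leq j\leq n-1}$ (Lemma \ref{n1}), the nef cone is the dual cone cut out by these $3n-2$ curves. Lemma \ref{excep} gives the basis $H,D^\pm_1,\dots,D^\pm_{n-1}$ of $\operatorname{Pic}$. Using (\ref{bstl}), $D^\pm_n$ is expressed in this basis, so a class $L=xH-\sum_{i=1}^{n-1}(p_iD^+_i+q_iD^-_i)$ is written in coordinates $(x,p,q)$. The plan is to compute its intersections with the generating curves via Lemma \ref{i1l}, and then identify the resulting polyhedral cone with the cone spanned by (\ref{nefll}).

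The first step is to change to the coordinates adapted to the $\Delta$'s. Define second differences $\alpha_k:=p_{k-1}-2p_k+p_{k+1}$ and $\beta_k:=q_{k-1}-2q_k+q_{k+1}$, with boundary conventions $p_0=q_0=0$ and $p_n=q_n=0$ at the truncated end. By Lemma \ref{i1l}, $\zeta^+_j\cdot L=p_j-2p_{j+1}+p_{j+2}$ (truncated when indices exceed $n-1$), and similarly for $\zeta^-_j$. Hence the $\zeta$-inequalities say that the sequences $(p_i)$ and $(q_i)$ are concave-type, i.e.\ the corresponding second differences are nonnegative. The $\gamma_l$-inequalities read
\[
x-(q_{l+1}-q_{l+2})-(p_{n-l}-p_{n-l+1})\geq 0,
\]
with truncations as in Lemma \ref{i1l}. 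Note that $\Delta^+_k$ has $p_i=k+1-i$ for $i\leq k$, a ``tent'' whose second difference is concentrated at one index. In particular, $-\Delta^+_k$ pairs with the $\zeta^+$ curves as a single kink.

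The second step is to verify that each listed generator $H+\Delta^+_i+\Delta^-_j$ with $i+j\leq n$ is nef. For the $\zeta$ curves this follows from the single-kink property above. For $\gamma_l$, the first difference $p_{m}-p_{m+1}$ of $\Delta^+_i$ is $1$ for $m\leq i$ and $0$ otherwise. So $\gamma_l\cdot(H+\Delta^+_i+\Delta^-_j)=1-[\,l+1\leq j\,]-[\,n-l\leq i\,]$, and this is $\geq0$ exactly because $i+j\leq n$ prevents both brackets from equaling $1$ simultaneously.

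The third step is the converse. Write an arbitrary nef $L$ as $xH+\sum_i\lambda_i\Delta^+_i+\sum_j\mu_j\Delta^-_j$, with $\lambda_i,\mu_j\geq0$ given by the $\zeta$-inequalities (inverting the second-difference transform). The $\gamma_l$-inequalities then become
\[
\sum_{i\geq n-l}\lambda_i+\sum_{j\geq l+1}\mu_j\leq x\quad\text{for all } l.
\]
It remains to show that the polytope $\{(\lambda,\mu)\geq0:\ \text{these inequalities with } x=1\}$ has vertices exactly the points $e_i+f_j$ with $i+j\leq n$, where $e_0=f_0=0$. This is the chain-polytope statement of Stanley (cf.\ Remark \ref{chp}). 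The constraints are indexed by maximal chains $\{n-l,\dots,n-1\}\cup\{l+1,\dots,n-1\}$ in the poset formed by two chains glued so that $i$ in the first is incomparable to $j$ in the second precisely when $i+j\leq n$. Since antichains there have at most one element from each chain, the vertices of the chain polytope are the indicator vectors of antichains, which gives exactly (\ref{nefll}).

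I expect the main obstacle to be bookkeeping at the boundary indices. Specifically, the truncations ``$1\leq i\leq n-1$'' in Lemma \ref{i1l} together with the elimination of $D^\pm_n$ must make the $\gamma_0,\gamma_{n-1}$ and $\zeta^\pm_{n-2},\zeta^\pm_{n-1}$ inequalities match the chain-polytope description exactly, including the range $0\leq i,j\leq n-1$. If the direct polytope argument is awkward, I would instead verify the vertex description by a unimodular change of variables to partial sums and check total unimodularity of the resulting interval matrix.
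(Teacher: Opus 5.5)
Your proposal is correct, and up to the final combinatorial step it is the paper's argument. In the basis $H,\Delta^\pm_k$ you describe $\operatorname{Nef}(\mathcal{TL}_n)$ as $\{\lambda,\mu\ge0,\ \sum_{i\ge n-l}\lambda_i+\sum_{j\ge l+1}\mu_j\le x \text{ for } 0\le l\le n-1\}$; this is exactly the system (\ref{subj}) with $a=\mu$, $b=\lambda$. Your intersection numbers $\zeta^\pm_j\cdot\Delta^\pm_k=\delta_{jk}$, $\gamma_l\cdot\Delta^-_k=-[\,k\ge l+1\,]$ and $\gamma_l\cdot\Delta^+_k=-[\,k\ge n-l\,]$ are right.

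The two routes differ only in how the extremal rays of this cone are found. The paper argues by induction on $n$ on a primitive extremal generator:
\begin{itemize}
\item it subtracts a multiple of $B_k=H+\Delta^+_{n-k}+\Delta^-_k$ when $a_k,b_{n-k}\ge1$;
\item it deletes the pair of variables and drops to $n-1$ when $a_k=b_{n-k}=0$;
\item it excludes the remaining case by counting supporting hyperplanes through the ray and subtracting two adjacent $\gamma$-inequalities.
\end{itemize}
You instead identify the slice $x=1$ with Stanley's chain polytope and read off its vertices as antichain indicator vectors. Passing to this slice is legitimate: every $\lambda_i,\mu_j$ occurs in some $\gamma_l$-inequality, so $x=0$ forces $L=0$. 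The paper records the chain-polytope observation only afterwards, in Remark \ref{chp}. Your argument is shorter, avoids the somewhat delicate hyperplane count, and shows at once that every listed class spans an extremal ray. The paper's argument is self-contained and does not import Stanley's theorem.

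When you write it up, specify the poset. For $\lambda_i,\mu_j$ to be incomparable exactly when $i+j\le n$, the two chains must be oriented oppositely, e.g.\ $\mu_{n-1}<\cdots<\mu_1$, $\lambda_1<\cdots<\lambda_{n-1}$, and $\mu_j<\lambda_i$ iff $i+j\ge n+1$. If both chains are increasing, transitivity forces extra comparabilities; Remark \ref{chp} needs the same adjustment. With the opposite orientation the maximal chains are precisely $\{\mu_j\}_{j\ge l+1}\cup\{\lambda_i\}_{i\ge n-l}$, as you claim.

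Two minor slips: nonnegative second differences means convex, not concave. Also, the relevant differences are the forward ones $p_j-2p_{j+1}+p_{j+2}$ with $p_n=p_{n+1}=0$, so the convention $p_0=0$ plays no role.
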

{\bf\noindent Proof of Proposition \ref{nefcl}.} It is clear that $H, \Delta_{1}^+,\Delta_2^+,\cdots,\Delta_{n-1}^+, \Delta_{1}^-,\Delta_2^-,\cdots,\Delta_{n-1}^-$ is a basis of $\operatorname{Pic}(\mathcal {TL}_{n})$ (see (\ref{dell}) for  $\Delta^{\pm}_k$). By Lemma \ref{i1l}, we conclude that $\mathrm{Nef}(\mathcal {TL}_{n})$ consists of divisors 
$D:=hH+\sum\nolimits_{i=1}^{n-1}a_i\Delta_i^-+\sum\nolimits_{j=1}^{n-1}b_j\Delta_j^+$
satisfying the conditions
\begin{equation}\label{subj}
\left\{\begin{aligned}
&h-\sum\nolimits_{i=l+1}^{n-1}a_i-\sum\nolimits_{j=n-l}^{n-1}b_j\geq0,\,\,\,\,\,\,0\leq l\leq n-1\\
&a_1,\cdots,a_{n-1},b_1,\cdots,b_{n-1}\geq 0
\end{aligned}\right..\end{equation}

We prove the statement by induction on $n$. When $n=2$, it is easy to verify that the cone defined by (\ref{subj}) is generated by $(h,a_1,b_1)=(1,1,1),(1,1,0),(1,0,1),(1,0,0)$.

Now we assume that Proposition \ref{nefcl} holds for $n=m-1\geq2$ and proceed to prove it for $n=m$. Take a primitive generator of $\mathrm{Nef}(\mathcal {TL}_{m})$. If $a_k,b_{m-k}\geq 1$ for a certain $1\leq k\leq m-1$, then by (\ref{bstl}), $D-\operatorname{min}\{a_k,b_{m-k}\}\cdot B_k$ also satisfies (\ref{subj}), which is a contradiction. If $a_k,b_{m-k}=0$ for a certain $1\leq k\leq m-1$, we delete the variables $a_k,b_{m-k}$ and reorder the others so that (\ref{subj}) takes the same form with $m$ replaced by $m-1$, which thus yields Proposition \ref{nefcl} by induction.  

In the following, we assume that $D$ is a primitive generator of $\mathrm{Nef}(\mathcal {TL}_{m})$ such that for each $1\leq k\leq m-1$, either $a_k=0$ and $b_{m-k}\geq 1$, or $a_k\geq 1$ and $b_{m-k}=0$.
Note that to obtain an extremal ray, we need the intersection of at least $2m-2$ supporting hyperplanes.  Then at least $m-1$ supporting hyperplanes come from the first $m$ inequalities of (\ref{subj}). Since $m\geq 3$, there are two adjacent ones. If we subtract them, we should have $a_k=b_{m-k}=0$ for a certain $1\leq k\leq m-1$, which is a contradiction.
The proof is complete.\,\,\,\endpf

\begin{remark}\label{chp}
The above cone is precisely the cone over the chain polytope $\mathcal{C}(P)$ of the poset $P$ defined on the set $\{a_1,\dots,a_n,b_1,\dots,b_n\}$ with the following cover relations as in Figure \ref{Fig:poset}:
\[
a_1 < a_2 < \cdots < a_n,\,\,\,
b_1 < b_2 < \cdots < b_n,\,\,\,
a_i < b_j \quad \text{whenever} \quad i+j > n.
\]
The antichains of $P$ are
the empty set, singletons $\{a_i\}$ or $\{b_j\}$,
and pairs $\{a_i, b_j\}$ with $i+j \leq n$. The characteristic vectors of these antichains correspond precisely to the given vectors (identifying $a_i$ with $e_{a_i}$, $b_j$ with $e_{b_j}$, and pairs with sums). By Stanley's foundational work \cite{Sta}, the chain polytope $\mathcal{C}(P)$ is the convex hull of these characteristic vectors.
\begin{center}
\begin{figure}
\centering
\begin{tikzpicture}
[scale=0.6,
    >=stealth,
    thick,
    big/.style   = {circle,draw,fill=black,inner sep=1.5pt},
    small/.style = {circle,draw,fill=red!70,inner sep=1pt}
]

%---- main vertices -------------------------------------------------
\node[color=SkyBlue, circle, fill=SkyBlue, opacity=0.6, inner sep=8pt, label=center:$\color{white}\scriptstyle1$] (1) at (0,4) {};
\node[color=SkyBlue, circle, fill=SkyBlue, opacity=0.6, inner sep=8pt, label=center:$\color{white}\scriptstyle2$] (2) at (2,4) {};
\node[color=SkyBlue, circle, fill=SkyBlue, opacity=0.6, inner sep=8pt, label=center:$\color{white}\scriptstyle3$] (3) at (4,4) {};
\node[color=SkyBlue, circle, fill=SkyBlue, opacity=0.6, inner sep=8pt, label=center:$\color{white}\scriptstyle n-1$] (n-1) at (12,4) {};
\node[color=SkyBlue, circle, fill=SkyBlue, opacity=0.6, inner sep=8pt, label=center:$\color{white}\scriptstyle n$] (n) at (14,4) {};
\node[color=DarkRed, circle, fill=DarkRed, opacity=0.8, inner sep=8pt, label=center:$\color{white}\scriptstyle1$] (1') at (0,0) {};
\node[color=DarkRed, circle, fill=DarkRed, opacity=0.8, inner sep=8pt, label=center:$\color{white}\scriptstyle2$] (2') at (2,0) {};
\node[color=DarkRed, circle, fill=DarkRed, opacity=0.8, inner sep=8pt, label=center:$\color{white}\scriptstyle3$] (3') at (4,0) {};
\node[color=DarkRed, circle, fill=DarkRed, opacity=0.8, inner sep=8pt, label=center:$\color{white}\scriptstyle n-1$] (n-1') at (12,0) {};
\node[color=DarkRed, circle, fill=DarkRed, opacity=0.8, inner sep=8pt, label=center:$\color{white}\scriptstyle n$] (n') at (14,0) {};

%---- arrows --------------------------------------------------------
\draw[->, color=gray] (2) -- (1);
\draw[->, color=gray] (3) -- (2);
\draw[->, color=gray] (2') -- (1');
\draw[->, color=gray] (3') -- (2');
\draw[->, color=gray, color=gray] (n') -- (n-1');
\draw[->, color=gray] (n) -- (n-1);

\draw[->, color=gray] (1') -- (n);
\draw[->, color=gray] (2') -- (n);
\draw[->, color=gray] (2') -- (n-1);
\draw[->, color=gray] (3') -- (n);
\draw[->, color=gray] (3') -- (n-1);

\draw[->, color=gray] (n-1') -- (n);
\draw[->, color=gray] (n-1') -- (n-1);
\draw[->, color=gray] (n-1') -- (3);
\draw[->, color=gray] (n-1') -- (2);

\draw[->, color=gray] (n') -- (1);
\draw[->, color=gray] (n') -- (2);
\draw[->, color=gray] (n') -- (3);
\draw[->, color=gray] (n') -- (n-1);
\draw[->, color=gray] (n') -- (n);

\draw[->, color=gray] (5,4) -- (3);
\draw[->, color=gray] (5,0) -- (3');
\draw[->, color=gray] (n-1) -- (11,4);
\draw[->, color=gray] (n-1') -- (11,0);

\node at (7,4) {$\ldots$};
\node at (9,4) {$\ldots$};
\node at (7,0) {$\ldots$};
\node at (9,0) {$\ldots$};
\end{tikzpicture}
\caption{The poset $P$, $a_i$ in blue and $b_i$ in red.}
\label{Fig:poset}
\end{figure}
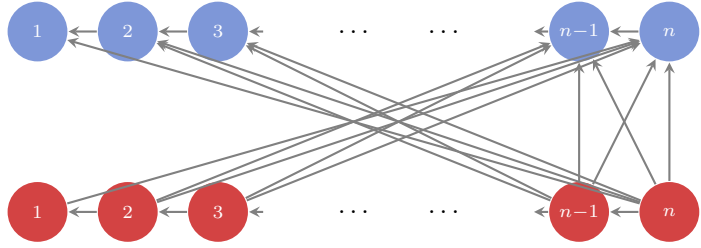
\end{center}

\end{remark}
%\begin{corollary}\label{wfl}The anticanonical bundle of $\mathcal {TL}_n$  is nef and big.  $\mathcal {TL}_n$ is Fano if and only if $n=1,2$. \end{corollary} {\bf\noindent Proof of Proposition \ref{wfl}.} It follows from Lemmas \ref{excep} and \ref{i1l}.\,\,\,\endpf

\subsection{Effective and nef cones of \texorpdfstring{$\mathcal {TO}_n$}{dd}}\label{kcpto}
Denote the canonical bundle of $\mathcal {TO}_{n}$ by $K$, and the line bundle $(RO_{n})^*\left(\mathcal O_{\mathrm{OG}^+(n,2n)}(1)\right)$ by $H$. Similarly to Lemma \ref{excep}, we can prove that
\begin{lemma}\label{excepo}
When $n$ is even, the exceptional divisor of the blow-up ${RO}_{n}$ is given by $D_{1}^++\cdots+D_{[\frac{n}{2}]-1}^++D_{1}^-+\cdots+D_{[\frac{n}{2}]-1}^-$,
$\operatorname{Pic}(\mathcal {TO}_{n})$  is freely generated by $H,D^{\pm}_1,\cdots,D^{\pm}_{[\frac{n}{2}]-1}$, and 
\begin{equation*}%\label{kano}
K=-2(n-1) H+\sum\limits_{i=1}^{[\frac{n}{2}]-1}\left(\frac{(n+3-2i)(n-2i)}{2}\right) (D_{i}^- +D_{i}^+)=-4\sum\limits_{m=1}^{[\frac{n}{2}]-1}B_m-\sum\limits_{i=1}^{[\frac{n}{2}]}(D_{i}^- +D_{i}^+).
\end{equation*}
When $n$ is odd, the exceptional divisor of ${RO}_{n}$ is $D_{1}^++\cdots+D_{[\frac{n}{2}]-1}^++D_{1}^-+\cdots+D_{[\frac{n}{2}]}^-$, $\operatorname{Pic}(\mathcal {TO}_{n})$ is freely generated by $H,D^+_1,\cdots,D^+_{[\frac{n}{2}]-1},D^-_1,\cdots,D^-_{[\frac{n}{2}]}$, and 
\begin{equation}\label{kan}
\begin{split}
K=&-2(n-1) H+\sum\limits_{i=1}^{[\frac{n}{2}]-1}\left(\frac{(n+2-2i)(n-1-2i)}{2}\right) D_{i}^+\\
&+\sum\limits_{i=1}^{[\frac{n}{2}]}\left(\frac{(n+3-2i)(n-2i)}{2}\right)D_{i}^-=-3B_{[\frac{n}{2}]}-4\sum\limits_{m=1}^{[\frac{n}{2}]-1}B_m-\sum\limits_{i=1}^{[\frac{n}{2}]}(D_{i}^- +D_{i}^+)\,.
\end{split}
\end{equation}
\end{lemma}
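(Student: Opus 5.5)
We follow the proof of Lemma \ref{excep}, replacing Lemma \ref{sepal} by Lemma \ref{sepao} and determinantal loci by Pfaffian loci. By the analogue of Lemma \ref{cpcl} for $\mathcal {TO}_n$, $RO_n$ factors as iterated blow-ups of $\mathrm{OG}^+(n,2n)$. First blow up $\overline{\underline{\mathcal V}^-_{(n,0)}}$, then the total transforms of $\overline{\underline{\mathcal V}^-_{(n-2k,2k)}}$ for increasing $k$, and then the total transforms of the $\overline{\underline{\mathcal V}^+}$ loci, starting from the sink. By Lemma \ref{sepao} each $\underline S_k$ is a union of one stable and one unstable closure, so every center is the union of a smooth subscheme and a divisor (this is read off in the Mille Cr\^epes charts $\underline{\mathcal A}_l$, where $D^{\pm}_k$ are coordinate hyperplanes $a=0$ or $b=0$, as in Lemma \ref{snc}). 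Hence only the genuinely positive-codimension centers produce exceptional divisors. We then count them.

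\textbf{Even $n$.} The centers $\overline{\underline{\mathcal V}^-_{(n,0)}}$ and $\overline{\underline{\mathcal V}^+_{(0,n)}}$ are the source and sink. They are points, and the intermediate centers have codimension at least $2$. The fixed components $\underline{\mathcal V}_{(n,0)}$ and $\underline{\mathcal V}_{(0,n)}$, however, correspond to $D^-_{n/2}$ and $D^+_{n/2}$, which are strict transforms of divisors. Indeed, $D^{\pm}_{[n/2]}$ is the strict transform of the Pfaffian divisor cut out by the lowest or highest spinor coordinate. This matches the Lagrangian case, where $D^\pm_n=B_0,B_n$. So the exceptional divisors are $D^\pm_1,\dots,D^\pm_{n/2-1}$.

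\textbf{Odd $n$.} The source and sink are no longer symmetric. With the indexing of $D_k^{\pm}$ via $2[\frac n2]$, the component $D^-_{[n/2]}$ lies over $\underline{\mathcal V}_{(1,n-1)}$, a positive-codimension locus, so it is exceptional. Only $D^+_{[n/2]}$ is a strict transform. Since $\operatorname{Pic}(\mathrm{OG}^+)=\mathbb Z H$, the Picard group of the iterated blow-up is generated freely by $H$ and the exceptional divisors, which gives both Picard statements.

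\textbf{Canonical class.} We compute $K=RO_n^*K_{\mathrm{OG}^+}+\sum(c_i-1)E_i$. Here $K_{\mathrm{OG}^+(n,2n)}=-2(n-1)H$ in the spinor polarization, and $c_i$ is the codimension of the $i$-th center at the stage it is blown up, corrected for the fact that later centers are total transforms. It is cleaner to compute the discrepancy directly in a chart. On $\underline{\mathcal A}_0$, $RO_n$ is written explicitly through $\underline\Gamma_0$. The Jacobian of $(b_i,\xi)\mapsto$ the skew matrix $W$ is a monomial $\prod_k b_k^{e_k}$, and the coefficient of $D^-_k=\{b_k=0\}$ is $e_k$. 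The variable $b_k$ multiplies all entries of the $\underline\Omega_j$ blocks with $j\ge k$; these entries have $(n-2k+2)(n-2k+1)/2$ free coordinates minus the ones absorbed as new coordinates. This yields $e_k=\frac{(n+3-2k)(n-2k)}{2}$. The symmetric chart $\underline{\mathcal A}_n$ (or $\underline{\mathcal A}_{n-1}$ when $n$ is odd) gives the $D^+$ coefficients; in the odd case $n$ is shifted to $n-1$, giving $\frac{(n+2-2i)(n-1-2i)}{2}$.

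\textbf{Expression in colors.} For the second expression, define $B_k$ as the strict transform of the Pfaffian divisor $\{P_{I_k}=0\}$, the analogue of \eqref{bstl}. In the chart the pullback is $\prod b_i^{k+1-i}$ times the corresponding monomial in the $a$'s, which gives $B_k=H-\sum(\cdot)D^+_i-\sum(\cdot)D^-_i$. Substituting this and checking equality of coefficients is then a linear identity. The anticanonical weights $4$ and $3$ are those predicted by spherical theory: for the symmetric space $\mathrm{GL}_n/\mathrm{Sp}$, colors have coefficient given by the doubled-root weights.

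The main obstacle is the exact discrepancy bookkeeping. This means confirming the Jacobian exponents $e_k$ in the chart, and especially the odd-$n$ asymmetry, where the $D^-_{[n/2]}$ coefficient and the $3B_{[n/2]}$ term must be matched. I would first verify $n=3,4,5$ by hand to fix the indexing.
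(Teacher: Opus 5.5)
Your argument is essentially correct and keeps the paper's skeleton, but it computes the canonical class by a genuinely different (more explicit) method, and two of your divisor identifications are wrong, though harmless.

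\textbf{Shared skeleton.} The paper proves this lemma exactly as Lemma \ref{excep}. It factors $RO_n$ into iterated blow-ups whose centers are a smooth subscheme plus a divisor. The standard blow-up formulas then give the exceptional set, $\operatorname{Pic}$ (from $\operatorname{Pic}(\mathrm{OG}^+)=\mathbb Z H$), and $K$ (each exceptional divisor picks up codimension minus one). The color form comes from (\ref{bsto}).

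\textbf{Where you differ: the discrepancy.} You compute $K_{\mathcal{TO}_n/\mathrm{OG}^+}$ as a Jacobian in the Mille Cr\^epes charts, and this works.
On $\underline{\mathcal A}_0$ put $d_k=b_1\cdots b_k$, so that $W=\sum_k d_k\underline\Omega_k$.
\begin{itemize}
\item In the variables $(d_k,\underline\Xi_k)$ the map to $W$ is block-triangular with diagonal factors $d_k^{2(n-2k)}$.
\item The map $b\mapsto d$ contributes $\prod_i b_i^{[n/2]-i}$.
\item Hence the Jacobian of $\underline\Gamma_0$ is $\prod_i b_i^{e_i}$ with $e_i=\sum_{k=i}^{[n/2]}2(n-2k)+([\frac n2]-i)=\binom{n-2i+2}{2}-1=\frac{(n+3-2i)(n-2i)}{2}$.
\item On $\underline{\mathcal A}_{2[n/2]}$ the same computation with $n$ replaced by $2[\frac n2]$ gives the $D^+$ coefficients.
\end{itemize}
This avoids tracking total versus strict transforms, which the paper leaves implicit, and it makes the odd-$n$ asymmetry transparent.

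\textbf{Colors.} On $\underline{\mathcal A}_0$ one has $z_{\underline I_k}/z_{\underline I_0}=\operatorname{Pf}(W_{1\cdots 2k})=\prod_{i\le k}b_i^{k+1-i}$. This yields (\ref{bsto}), including the modified $B_{[n/2]}$ for odd $n$. The color expression is then a linear identity, and it does check, including the coefficients $4$ and $3$. Your appeal to $\mathrm{GL}_n/\mathrm{Sp}$ is unnecessary, and for odd $n$ it does not apply.

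\textbf{Wrong identifications in your ``Even $n$'' and ``Odd $n$'' paragraphs.} Your conclusions there are right, but the reasoning attaches the wrong divisors.
\begin{itemize}
\item You rightly say $\underline{\mathcal V}_{(n,0)}$ and (for $n$ even) $\underline{\mathcal V}_{(0,n)}$ are points. They are the first centers, and their preimages are $D^-_1$ and $D^+_1$, the deepest exceptional divisors, isomorphic to $\mathcal{MO}_n$. They do not correspond to $D^{\mp}_{n/2}$.
\item The non-exceptional divisors are strict transforms of divisors on $\mathrm{OG}^+$. For $n$ even, $D^-_{n/2}$ is the strict transform of $\overline{\underline{\mathcal V}^-_{(2,n-2)}}=\{z_{\underline I_{n/2}}=0\}$. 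For every $n$, $D^+_{[n/2]}$ is the strict transform of $\overline{\underline{\mathcal V}^+_{(n-2,2)}}=\{z_{\underline I_0}=0\}$.
\item For odd $n$, $D^-_{[n/2]}$ lies over $\overline{\underline{\mathcal V}^-_{(3,n-3)}}$, which has codimension $3$. It is $D^+_1$, not $D^-_{[n/2]}$, that lies over $\underline{\mathcal V}_{(1,n-1)}\cong\mathbb P^{n-1}$.
\end{itemize}

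\textbf{Correct bookkeeping.} $RO_n(D^-_k)$ has codimension $\binom{n-2k+2}{2}$, and $RO_n(D^+_k)$ has codimension $\binom{2[n/2]-2k+2}{2}$. These equal $1$ exactly for $D^-_{n/2}$ ($n$ even) and for $D^+_{[n/2]}$. Their values minus one are precisely the coefficients in the formula for $K$.

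\textbf{Why this is harmless.} $\mathrm{OG}^+(n,2n)$ is smooth, so a prime divisor of $\mathcal{TO}_n$ is $RO_n$-exceptional if and only if its coefficient in $K_{\mathcal{TO}_n/\mathrm{OG}^+}$ is positive. Your Jacobian exponents therefore settle the exceptional set on their own. You should replace the faulty identifications with this observation.
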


Denote by $B$ the Borel subgroup of ${\rm GL}_n$ consisting of  upper triangular matrices. It is clear that over $\mathbb K$, $\mathcal {TO}_{n}$ is a spherical ${\rm GL}_n$-variety. 
For $0\leq k\leq [n/2]$, define 
\begin{equation*}
b_{k}:=\left\{x\in\left. \mathrm{OG}^+(n,2n)\subset \mathbb P\big(\bigoplus\nolimits_{i=0}^{[n/2]}\bigwedge\nolimits^{2i}V^*\big) \right|\,z_{\underline I_k}(x)=0\,\right\}   
\end{equation*}
where $\underline I_k:=(1,2,\cdots,2k)$. Let $B_{k}\subset\mathcal {TO}_{n}$ be its strict transform. Similarly, for $n$ even, \begin{equation}\label{bsto}
\begin{split}
&B_k= H-\sum\limits_{i=1}^{[\frac{n}{2}]-k}([\frac{n}{2}]-k+1-i) D^+_i-\sum\limits_{i=1}^k(k+1-i)D^-_i\,\,{\rm for}\,\,1\leq k\leq [\frac{n}{2}]-1,\\
&B_0= D^+_{[\frac{n}{2}]}=H-\sum\limits_{i=1}^{[\frac{n}{2}]-1}([\frac{n}{2}]+1-i) D^+_i,\,\,\,B_{[\frac{n}{2}]}= D^-_{[\frac{n}{2}]}=H-\sum\limits_{i=1}^{[\frac{n}{2}]-1}([\frac{n}{2}]+1-i) D^-_i\,.
\end{split}
\end{equation}
For $n$ odd, we shall modify $B_{[\frac{n}{2}]}$ by $B_{[\frac{n}{2}]}= H-\sum\nolimits_{i=1}^{[\frac{n}{2}]}([\frac{n}{2}]+1-i) D^-_i$.

Similarly to Proposition \ref{gbl}, we have
\begin{proposition}\label{gbo}
The irreducible $G$-invariant divisors of $\mathcal {TO}_{n}$ are $D_{1}^{\pm},  \cdots, D_{[\frac{n}{2}]}^{\pm}$. The colors of $\mathcal {TO}_{n}$ are
$B_1,\cdots, B_{[\frac{n}{2}]-1}$ for $n$ even, and $B_1,\cdots, B_{[\frac{n}{2}]}$ for $n$ odd. Moreover, $\mathcal {TO}_{n}$ is toroidal.
\end{proposition}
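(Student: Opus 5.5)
The proof will follow that of Proposition \ref{gbl}, with three steps: identify the $G$-stable divisors, identify the remaining $B$-stable divisors (the colors), and check that no color contains a $G$-orbit.

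\textbf{The $G$-stable divisors.} By Theorem \ref{gwondo}(C), after base change to $\mathbb K$, the complement of the open ${\rm GL}_n$-orbit of $\mathcal {TO}_n$ is exactly $D^{\pm}_1,\cdots,D^{\pm}_{[\frac n2]}$. So these are the irreducible $G$-invariant divisors.

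\textbf{The colors.} First, each $B_k$ is $B$-stable. Indeed, $z_{\underline I_k}$ is the coefficient of $e^*_1\wedge\cdots\wedge e^*_{2k}$ in the pure spinor. This is a lowest or highest weight vector for the upper triangular Borel acting on $\bigwedge^{2k}V^*$, so $b_k$ is $B$-stable, and hence so is its strict transform $B_k$.

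Next, let $\mathfrak D$ be a $B$-stable prime divisor that is neither $G$-stable nor one of the $B_k$. Then $RO_n(\mathfrak D)$ is a $B$-stable divisor of $\mathrm{OG}^+(n,2n)$. Take the point of the open $G$-orbit in the chart $U'_0$ with $W=J$ in block-diagonal symplectic normal form (plus a zero row and column when $n$ is odd). Its $B$-orbit is open, which one checks with the chart coordinates as in Lemma \ref{pure}. So $RO_n(\mathfrak D)$ must be contained in the complement of this open $B$-orbit. In the chart $U'_0$ that complement is cut out by the leading principal sub-Pfaffians $\operatorname{Pf}(W_{1\cdots 2k})=z_{\underline I_k}$, which are exactly the $b_k$. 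This forces $\mathfrak D=B_k$ for some $k$, a contradiction.

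Which $B_k$ are genuine colors follows from (\ref{bsto}):
\begin{enumerate}[label=(\roman*)]
\item For $n$ even, $B_0=D^+_{[\frac n2]}$ and $B_{[\frac n2]}=D^-_{[\frac n2]}$ are $G$-stable, so the colors are $B_1,\cdots,B_{[\frac n2]-1}$.
\item For $n$ odd, $B_{[\frac n2]}$ is not among the $D^-_i$. The reason is that $\bigwedge^{2[\frac n2]}V^*$ is no longer one-dimensional, so $b_{[\frac n2]}$ is not the boundary. Hence the colors are $B_1,\cdots,B_{[\frac n2]}$.
\end{enumerate}

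\textbf{Toroidality.} By Theorem \ref{gwondo}(C), every $G$-orbit closure is an intersection of boundary divisors, and each meets some Mille Cr\^epes chart $\underline{\mathcal A}_l$ in an open subset. In $\underline{\mathcal A}_l$, computing the sub-Pfaffians via Lemma \ref{pure} gives an analogue of (\ref{te}): $(RO_n)^*z_{\underline I_k}$ is a monomial in the $a_i$, $b_i$ times the unit $1$. The genuine-color part of the divisor of $z_{\underline I_k}$ therefore does not meet $\underline{\mathcal A}_l$. So $B_k\cap\underline{\mathcal A}_l=\emptyset$ for each color $B_k$, hence no color contains a $G$-orbit, and $\mathcal {TO}_n$ is toroidal.

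\textbf{Main obstacle.} The step I expect to be hardest is this monomiality for the odd case $k=[\frac n2]$. There the normal form has an extra kernel direction, and I must check that on the relevant charts $\operatorname{Pf}$ of the top block still factors as a pure monomial. I would do this directly on the $l=0$ and $l=2[\frac n2]$ charts, where the boundary divisors are coordinate hyperplanes $\{a_k=0\}$, $\{b_k=0\}$ as in Lemma \ref{snc}.
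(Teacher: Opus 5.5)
Your proposal is correct and follows the paper's route: the paper proves this by transporting the proof of Proposition \ref{gbl}. In that argument, a non-$G$-stable $B$-stable divisor is pushed by $RO_n$ into the complement of the open $B$-orbit through a normal-form point, i.e.\ into $\bigcup_k b_k$. Toroidality then follows because $(RO_n)^*z_{\underline I_k}$ is a monomial in the $a_i,b_i$ on each chart $\underline{\mathcal A}_l$, so every color misses these charts and hence contains no $G$-orbit. Two justifications are loose but easily repaired: $B$-stability of $b_k$ follows from $\operatorname{Pf}\big((A^TWA)_{1\cdots 2k}\big)=\det(A_{1\cdots 2k})\operatorname{Pf}(W_{1\cdots 2k})$ for upper triangular $A$ rather than from the ``lowest or highest weight'' hedge, and openness of the $B$-orbit of the point $W=J$ comes from the block-triangular congruence normal form exhibited by $\underline{\mathcal A}_0$ with all $b_i\neq 0$, not from Lemma \ref{pure}.
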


In what follows, we assume $\mathbb K$ has characteristic $0$. Computation yields that
\begin{proposition}\label{effo}
When $n$ is even, $\operatorname{Eff}(\mathcal{TO}_{n})$ is generated by $\{D_{1}^{\pm},\cdots,D_{[\frac{n}{2}]}^{\pm}\}$. When $n$ is odd, $\operatorname{Eff}(\mathcal{TO}_{n})$ is generated by $\{B_{[\frac{n}{2}]},D_{1}^{\pm},D_{2}^+,\cdots,D_{[\frac{n}{2}]}^{\pm}\}$.   
\end{proposition}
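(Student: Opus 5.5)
The plan is to mirror the argument for Proposition \ref{effl}. By Theorem \ref{sph}, every effective divisor on the smooth complete spherical variety $\mathcal{TO}_n$ is rationally equivalent to an effective $B$-stable divisor. Hence $\operatorname{Eff}(\mathcal{TO}_n)$ is generated by the finitely many $B$-stable prime divisors. By Proposition \ref{gbo}, these are the $G$-stable boundary divisors $D_1^{\pm},\cdots,D_{[\frac{n}{2}]}^{\pm}$ together with the colors $B_1,\cdots,B_{[\frac{n}{2}]-1}$ (and also $B_{[\frac{n}{2}]}$ when $n$ is odd). So the cone is spanned by these classes, and the task reduces to deciding which colors are redundant.

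Take $n$ even first. By (\ref{bsto}), $D^+_{[\frac{n}{2}]}=H-\sum_{i=1}^{[\frac{n}{2}]-1}([\frac{n}{2}]+1-i)D_i^+$. I would use this to write $H$ as an effective combination of the $D_i^+$. Substituting into the formula for $B_k$ with $1\le k\le [\frac{n}{2}]-1$ gives
\[
B_k=D^+_{[\frac{n}{2}]}+\sum\nolimits_{i=1}^{[\frac{n}{2}]-k}k\,D_i^+ +\sum\nolimits_{i=[\frac{n}{2}]-k+1}^{[\frac{n}{2}]-1}([\frac{n}{2}]+1-i)D_i^+ -\sum\nolimits_{i=1}^k(k+1-i)D_i^-.
\]
This has negative coefficients on the $D_i^-$, so this substitution alone is not enough. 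The fix is to expand $H$ symmetrically, splitting it as a convex combination of the two expressions coming from $D^+_{[\frac{n}{2}]}$ and $D^-_{[\frac{n}{2}]}$. More directly, I would use the expression for $H$ coming from $D^{+}_{[\frac{n}{2}]}$ against the $D^-$ terms and check coefficientwise nonnegativity.

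The cleaner route is the one used in the proof of Proposition \ref{nefcl}: a color is redundant exactly when it lies in the cone spanned by the boundary divisors. I would test this by pairing with curves. I would produce $T$-invariant curves analogous to $\gamma_l,\zeta_j^{\pm}$ in Lemma \ref{i1l} (closures of coordinate lines in the Mille Cr\^epes charts $\underline{\mathcal A}_l$) and compute the intersection matrix. The even case should then follow as in Proposition \ref{effl}. In the $\mathcal{TL}_n$ case, the analogue of (\ref{bstl}) shows that $B_k$ equals a nonnegative combination of the $D$'s, because $H$ is itself a positive combination of the boundary divisors. Concretely, $H$ can be written as $B_0=D^+_{[\frac{n}{2}]}$ plus positive $D^+$'s. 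To get the $D^-$ part, I would add the relation $0=B_0-B_{[\frac{n}{2}]}$ with suitable rational weights, so that the coefficients of $B_k$ in terms of $D_i^\pm$ become nonnegative. This amounts to solving a small linear system whose solution mixes the two expressions with weights $([\frac{n}{2}]-k)/[\frac{n}{2}]$ and $k/[\frac{n}{2}]$, and then checking the resulting coefficients are $\ge 0$. That is the computational heart, and I expect it to work by a convexity argument in the index $i$.

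For $n$ odd, $B_{[\frac{n}{2}]}$ is no longer a boundary divisor and $D^-_{[\frac{n}{2}]}$ is not a basis element of the Picard group (Lemma \ref{excepo}). The same substitution should show that $B_1,\cdots,B_{[\frac{n}{2}]-1}$ are redundant, using $D^+_{[\frac{n}{2}]}$ and $B_{[\frac{n}{2}]}$ as the two ``ends''. It should also show that $D_1^-$ is redundant, being a positive combination of $B_{[\frac{n}{2}]}$ and the remaining divisors. This matches the stated generating set, which omits $D_1^-$. Finally, to see that $B_{[\frac{n}{2}]}$ is genuinely needed, I would exhibit a curve, e.g.\ an analogue of $\zeta^-_{[\frac{n}{2}]-1}$ or a $\gamma_l$, meeting every listed $D$ nonnegatively but $B_{[\frac{n}{2}]}$ negatively. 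The main obstacle is the odd-case bookkeeping: pinning down the correct expression of $D_1^-$ and verifying nonnegativity of all coefficients.
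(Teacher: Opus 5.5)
Your core reduction is right, and it is what the paper's terse ``computation'' amounts to. By Theorem \ref{sph} and Proposition \ref{gbo}, $\operatorname{Eff}(\mathcal{TO}_n)$ is spanned by the $D_i^{\pm}$ and the colors. Your weights $(m-k)/m$ and $k/m$, with $m=[\frac{n}{2}]$, do make all coefficients nonnegative. Explicitly, for $n$ even and $1\le k\le m-1$, (\ref{bsto}) gives
\[
\begin{aligned}
mB_k={}&(m-k)D^+_m+kD^-_m+\sum_{i=1}^{m-k}k(i-1)D^+_i+\sum_{i=m-k+1}^{m-1}(m-k)(m+1-i)D^+_i\\
&+\sum_{i=1}^{k}(m-k)(i-1)D^-_i+\sum_{i=k+1}^{m-1}k(m+1-i)D^-_i.
\end{aligned}
\]
For $n$ odd the same identity holds with $D^-_m$ replaced by $B_m$ and the last sum extended to $i=m$. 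The statement only asserts generation, so this already settles both cases. The curve-pairing test you float in between is the wrong tool: curves detect nefness, not membership in $\operatorname{Eff}$.

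Your odd-case additions, however, contain two steps that would fail.

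\emph{$D_1^-$ is not redundant.} You misread the statement: it lists $D_1^{\pm}$, so it does not omit $D_1^-$ (the ``$D_2^+$'' is a typo for $D_2^{\pm}$). Use the basis $H,D^+_1,\dots,D^+_{m-1},D^-_1,\dots,D^-_m$ of Lemma \ref{excepo}. Every candidate generator has $H$-coefficient $0$ or $1$, and it is $1$ exactly for $D^+_m$ and the $B_k$. So a nonnegative expression of $D_1^-$ could only use the other basis vectors, which is impossible. A similar coordinate check shows that no $D_i^{\pm}$ can be dropped, except $D^+_1=H=B_1+D^-_1$ when $n=3$. The positive combination you plan to find does not exist, and that step must simply be removed.

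\emph{No curve meets $B_m$ negatively.} $B_m$ is a color of the toroidal variety $\mathcal{TO}_n$, hence globally generated and in particular nef. Consistently, Lemma \ref{i1o} gives $\gamma_l\cdot B_m=0$ and $\zeta^{\pm}_j\cdot B_m\ge 0$. Necessity of $B_m$ is not part of the statement. If you want it anyway, use a linear functional instead of a curve: the $D^-_m$-coordinate in the above basis is $\ge 0$ on every $D_i^{\pm}$ and equals $-1$ on $B_m$.
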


For $0\leq l\leq [\frac{n}{2}]-1$, let $\gamma_l$ be the closure of ${\mathring{\gamma}}_l:\mathbb K\rightarrow \underline{\mathcal A}_l$ defined by setting $b_{l+1}\left({\mathring{\gamma}}_l(t)\right)=t$ and other coordinates to zero. For $1\leq j\leq [\frac{n-1}{2}]$ (resp. $1\leq j\leq [\frac{n}{2}]-1$), let $\zeta^-_j$ (resp. $\zeta^+_j$) be the closure of  ${\mathring{\zeta}}^-_j\subset \underline {\mathcal A}_0$ (resp. ${\mathring{\zeta}}^+_j\subset \underline {\mathcal A}_{[\frac{n}{2}]}$) defined by setting
$\xi_{(2j)(2j+1)}({\mathring{\zeta}}^-_j(t))=t$ (resp. $\xi_{(2[\frac{n}{2}]-2j+1)(2[\frac{n}{2}]-2j)}({\mathring{\zeta}}^+_j(t))=t$) and other coordinates to zero. Similarly, we can conclude 
\begin{lemma}\label{i1o} For $0\leq l\leq [\frac{n}{2}]-1$, the non-zero intersection numbers of $\gamma_l$ are 
\begin{equation*}%\label{cone1}
\left\{
\begin{array}{ll}
   \gamma_l\cdot H=1\,\,\,\, \\
  \gamma_l\cdot D^-_{i}=\left\{ \begin{array}{cl}
       1\,\,\,\,&   i=l+1\,\,{\rm and\,\,}1\leq i\leq [\frac{n-1}{2}] \\
       -1\,\,\,\,& i=l+2\,\,{\rm and\,\,}1\leq i\leq [\frac{n-1}{2}] \\
  \end{array}\right.\\
  \gamma_l\cdot D^+_{i}=\left\{ \begin{array}{cl}
       1\,\,\,\,&   i=[\frac{n}{2}]-l\,\,{\rm and\,\,} 1\leq i\leq [\frac{n}{2}]-1 \\
       -1\,\,\,\,& i=[\frac{n}{2}]-l+1\,\, {\rm and\,\,} 1\leq i\leq [\frac{n}{2}]-1 \\
  \end{array}\right.   &  \\
\end{array}\right.;
\end{equation*}
for $1\leq j\leq [\frac{n-1}{2}]$ (resp. $[\frac{n}{2}]-1$), the non-zero intersection numbers of $\zeta^-_j$ (resp. $\zeta^+_j$) are 
\begin{equation*}%\label{cone2}
\zeta^-_j\cdot D^-_{i}\,\,({\rm resp}.\,\,\zeta^+_j\cdot D^+_{i})=\left\{ \begin{array}{cl}
       -1\,\,\,\,&   i=j\\
       2\,\,\,\,& i=j+1\,\,{\rm and\,\,}1\leq i\leq [\frac{n-1}{2}]\,\,({\rm resp}.\,\,[\frac{n}{2}]-1)\\
       -1\,\,\,\,&   i=j+2\,\,{\rm and\,\,}1\leq i\leq [\frac{n-1}{2}]\,\,({\rm resp}.\,\,[\frac{n}{2}]-1) \\
  \end{array}\right..
\end{equation*}
\end{lemma}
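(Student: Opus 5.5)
The plan follows the proof of Lemma \ref{i1l}. We view each curve $\gamma_l$, $\zeta^\pm_j$ as a curve in the ambient product $\mathbb P^{\underline N_n}\times\mathbb P^{\underline N^0_n}\times\cdots\times\mathbb P^{\underline N^{[n/2]}_n}$. We first compute its intersection with $H$ and with the pulled-back hyperplane classes $H_k$ ($1\le k\le [\frac n2]-1$, plus $k=[\frac n2]$ when $n$ is odd). We then recover the intersection numbers with the $D^\pm_i$ by solving the linear system given by (\ref{bsto}). This works because the $H_k$ coincide with the colors $B_k$; as in the Lagrangian case, this is checked by computing the sub-Pfaffian coordinates of Lemma \ref{pure} in the charts $\underline{\mathcal A}_0$ and $\underline{\mathcal A}_{[n/2]}$.

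\emph{The curves $\gamma_l$.} On $\mathring\gamma_l$ all coordinates except $b_{l+1}=t$ vanish. By Lemma \ref{pure}, the spinor coordinates of $\underline\Gamma_l(\mathring\gamma_l(t))$ are Pfaffians of $A=\begin{pmatrix}0&0\\0&t\,\underline\Omega_{l/2+1}\end{pmatrix}$, where $\underline\Omega_{l/2+1}$ is a single $2\times2$ block $\begin{pmatrix}0&1\\-1&0\end{pmatrix}$ (with the indexing adjusted to the chart). Hence the only nonzero coordinates are $1$ and $t$, so the projection to $\mathbb P^{\underline N_n}$ is a line and $\gamma_l\cdot H=1$. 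The projection to each $\mathbb P^{\underline N^k_n}$ is constant, because inside a fixed weight space only one monomial in $t$ survives; hence $\gamma_l\cdot B_k=0$ for all colors. Next, by Lemma \ref{snc} and its orthogonal analogue, $\gamma_l$ misses $D^-_i$ for $i\le l$ and $D^+_i$ for $i\le[\frac n2]-l-1$. Substituting into (\ref{bsto}) then gives a triangular system. Solving it from $k=l+1$ upwards, and from the $+$ side downwards, produces exactly the pattern $+1,-1$ at the indices $l+1,l+2$ and $[\frac n2]-l,[\frac n2]-l+1$, with all other intersections zero.

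\emph{The curves $\zeta^\pm_j$.} These curves lie inside $D^-_1$, respectively $D^+_1$, and they are contracted by $RO_n$, so $\zeta^\pm_j\cdot H=0$. For $\zeta^-_j$ in $\underline{\mathcal A}_0$, setting $\xi_{(2j)(2j+1)}=t$ moves only the $j$-th block $\underline\Xi$. Computing the $4\times4$ sub-Pfaffians that involve this block shows that the projection to $\mathbb P^{\underline N^{j}_n}$ is a line, while all other projections are constant. Hence $\zeta^-_j\cdot B_k=\delta_{jk}$.

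Solving (\ref{bsto}) with these values together with $H=0$ yields the Cartan-type pattern $-1,2,-1$ at $i=j,j+1,j+2$; the $\zeta^+_j$ are handled symmetrically. When $n$ is odd, the range changes to $j\le[\frac{n-1}{2}]$ on the $-$ side because of the modified expression for $B_{[n/2]}$. In that case the solve uses this modified formula, and the truncation conditions ``$1\le i\le[\frac{n-1}{2}]$'' appear because $D^-_{[n/2]}$ is then a generator rather than being expressed in terms of $H$.

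\emph{Main obstacle.} The main difficulty is the Pfaffian bookkeeping: one must verify that along each curve exactly one weight space $\mathbb P^{\underline N^k_n}$ sees a degree-one map and all others are constant. The sub-Pfaffians of $\underline\Omega_k$ involving $\underline\Xi_k$ are the quadratic terms $\underline\Xi^T J\underline\Xi$, so the degrees require care. I would first compute the low-dimensional cases $n=4,5$ explicitly, as in the example for $\mathrm{OG}^+(4,8)$, and then argue generally via the Laplace-type expansion of Pfaffians used in the proof of Lemma \ref{pure}.
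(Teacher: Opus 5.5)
Your strategy is the paper's: take degrees of each curve against $H$ and $H_k=B_k$, use vanishing against the boundary divisors it misses, then do a triangular solve in (\ref{bsto}). The $\gamma_l$ part goes through, modulo the remark in the second paragraph. The gap is in the $\zeta^\pm_j$ part.

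The data $\zeta^-_j\cdot H=0$ and $\zeta^-_j\cdot B_k=\delta_{jk}$ give only $[\frac n2]-1$ equations (resp.\ $[\frac n2]$ for $n$ odd). The unknowns $\zeta^-_j\cdot D^{\pm}_i$ number $2[\frac n2]-2$ (resp.\ $2[\frac n2]-1$), and the $B_k$-equations couple the $D^-$- and $D^+$-unknowns. So solving (\ref{bsto}) with only these data does not determine the intersection numbers, and it cannot produce the asserted vanishing $\zeta^-_j\cdot D^+_i=0$. You must add, exactly as you did for $\gamma_l$, that $\zeta^-_j\cap D^+_i=\emptyset$ for all $i$, and symmetrically $\zeta^+_j\cap D^-_i=\emptyset$. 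This is true:
\begin{itemize}
\item $RO_n$ contracts $\zeta^-_j$ to $L_0=V\oplus 0$, whereas every $L\in RO_n(D^+_i)\subset\overline{\underline{\mathcal V}^+_{(n-2[\frac n2]+2i-2,\,2[\frac n2]-2i+2)}}$ has $\dim(L\cap V^*)\ge 2[\frac n2]-2i+2\ge 2$.
\item $RO_n(\zeta^+_j)=L_{12\cdots 2[\frac n2]}$ has $\dim(L\cap V)=n-2[\frac n2]$, while every point of $RO_n(D^-_i)$ has $\dim(L\cap V)\ge n-2i+2>n-2[\frac n2]$.
\end{itemize}
With these zeros, the $B_k$-equations become $\sum_{i\le k}(k+1-i)\,\zeta^-_j\cdot D^-_i=-\delta_{jk}$. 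Their unique solution is the pattern $-1,2,-1$ at $i=j,j+1,j+2$.

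A second, smaller correction concerns how you compute the projections to $\mathbb P^{\underline N^k_n}$. Along $\zeta^-_j$ all $b$'s vanish, so $W=0$ and every spinor coordinate of positive weight is identically zero. Along $\mathring\gamma_l$, every weight other than the two carrying $1$ and $t$ likewise has all its Pfaffians identically zero. So ``only one monomial in $t$ survives'' is not literally true. The degrees must be read from the components of the chart embedding, i.e.\ after dividing the weight-$k$ Pfaffians by the monomial in the $a$'s and $b$'s given by the analogue of (\ref{te}). In $\underline{\mathcal A}_0$, these normalized coordinates restricted to $b=0$ are the Pl\"ucker coordinates of the $2k$-dimensional span of the row spaces of $\underline\Omega_1,\dots,\underline\Omega_k$. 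Along $\zeta^-_j$ this span moves linearly in $t$ exactly when $k=j$. The relevant quantities are therefore $2j\times 2j$ minors of these normalized coordinates, not $4\times4$ sub-Pfaffians. Read this way, your values $\zeta^-_j\cdot H_k=\delta_{jk}$ and $\gamma_l\cdot H_k=0$ are correct.
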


\begin{proposition}\label{nefco}
The nef cone of $\mathcal {TO}_n$ is generated by (\ref{nefo}).
\end{proposition}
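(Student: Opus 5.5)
We mirror the proof of Proposition \ref{nefcl}. First we need the analogue of Lemma \ref{n1}: by Theorem \ref{sph}, $\operatorname{NE}(\mathcal{TO}_n)$ is generated by $T$-invariant curves. Each such curve has a single non-zero coordinate in some Mille Cr\^epes chart $\underline{\mathcal A}_l$, so after permutations of the basis its class is that of one of $\gamma_l$ ($0\leq l\leq [\frac n2]-1$), $\zeta^-_j$ ($1\leq j\leq[\frac{n-1}{2}]$) or $\zeta^+_j$ ($1\leq j\leq [\frac n2]-1$). Hence a divisor is nef if and only if it has non-negative degree on these curves.

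Next we change basis. In the even case the Picard group has basis $H,\Delta^+_1,\dots,\Delta^+_{[\frac n2]-1},\Delta^-_1,\dots,\Delta^-_{[\frac n2]-1}$. In the odd case the minus side runs up to $\Delta^-_{[\frac{n-1}{2}]}=\Delta^-_{[\frac n2]}$, matching Lemma \ref{excepo}. Write
\[
D=hH+\sum_i a_i\Delta_i^-+\sum_j b_j\Delta_j^+ .
\]
From Lemma \ref{i1o}, $\zeta^\mp_j$ has intersection pattern $(-1,2,-1)$ with consecutive $D^\mp$'s. This pattern is dual to the second-difference structure of $\Delta_k=-\sum(k+1-i)D_i$. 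So $\zeta^-_j\cdot\Delta^-_k=\delta_{jk}$ up to sign, and likewise for $\zeta^+_j$; we need $\zeta^-_j\cdot\Delta^-_k$ to be $\geq 0$ for nefness, and its sign is checked against the $n=2,3$ cases. Likewise $\gamma_l\cdot H=1$, and $\gamma_l$ meets $\Delta^-_k$ (resp. $\Delta^+_k$) with $-1$ exactly when $k\geq l+1$ (resp. $k\geq [\frac n2]-l$). We therefore expect the nef cone to be cut out by
\[
a_i,b_j\geq 0,\qquad h-\sum_{i\geq l+1}a_i-\sum_{j\geq [\frac n2]-l}b_j\geq 0\quad (0\leq l\leq [\tfrac n2]-1),
\]
which is the analogue of (\ref{subj}) with $m=[\frac n2]$, except that the ranges of the $a$'s and $b$'s may differ by one when $n$ is odd.

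The remaining step is purely combinatorial: show the extremal rays of this cone are $H+\Delta^+_i+\Delta^-_j$ with $i+j\leq[\frac n2]$ in the stated ranges. We follow the induction of Proposition \ref{nefcl}. When $a_k$ and its paired variable $b_{[\frac n2]-k}$ are both positive, we subtract a multiple of the color $B_k$ using (\ref{bsto}); this should reduce to subtracting an element of the cone, contradicting primitivity. When both vanish, we delete them and drop to a smaller instance. Otherwise at least half of the needed supporting hyperplanes come from the $\gamma$-inequalities; two of them are adjacent, and subtracting them forces a vanishing pair, again a contradiction. The base cases are small $n$, where the cone is checked directly.

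The main obstacle is the odd case. There the $a$ variables outnumber the $b$ variables by one: $a_{[\frac n2]}$, coming from $D^-_{[\frac n2]}$ being exceptional, has no partner $b$. Also $B_{[\frac n2]}$ has the modified expression and is a color rather than a boundary divisor. We would first treat $a_{[\frac n2]}$ as paired with the empty index, i.e.\ with $b_0:=0$, note that it occurs only in the $l=0$ inequality, and check that the pairing $a_k\leftrightarrow b_{[\frac n2]-k}$ and the subtraction of $B_k$ still work. This would give precisely the constraint $j\leq [\frac{n-1}{2}]$, $i\leq[\frac n2]-1$, $i+j\leq[\frac n2]$ in (\ref{nefo}).
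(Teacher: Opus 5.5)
Your route is the paper's: read the inequalities (\ref{subjo}) off Lemma \ref{i1o}, then run the induction of Proposition \ref{nefcl}. That induction subtracts $B_k=H+\Delta^+_{[\frac n2]-k}+\Delta^-_k$ when both paired variables are positive, deletes a pair when both vanish, and otherwise gets a contradiction from two adjacent tight $\gamma$-inequalities. Your intersection computations are correct. In particular $\zeta^\mp_j\cdot\Delta^\mp_k=\delta_{jk}$ holds exactly with sign $+1$, so no check against small cases is needed.

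There is, however, a false step in your treatment of the odd case. The variable $a_{[\frac n2]}$ does \emph{not} occur only in the $l=0$ inequality. By your own formula ($\gamma_l\cdot\Delta^-_k=-1$ iff $k\ge l+1$), it occurs with coefficient $-1$ in every $\gamma_l$-inequality, $0\le l\le[\frac n2]-1$. This is also what pairing with a fictitious $b_0$ predicts, since $b_0$ would appear only for $l\ge[\frac n2]$, i.e. never.

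The slip is not cosmetic. For $n=5$ your version of the system would be $h\ge a_1+a_2$, $h\ge b_1$. That system admits $H+\Delta^+_1+\Delta^-_2$, yet $\gamma_1\cdot(H+\Delta^+_1+\Delta^-_2)=1-1-1=-1$, and indeed $1+2>[\frac n2]$, so it is not in (\ref{nefo}).

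The correct bookkeeping for $n=2m+1$ is as follows.
\begin{itemize}
\item The $l$-th inequality reads $h-\sum_{l<k\le m}a_k-\sum_{1\le k\le l}b_{m-k}\ge 0$.
\item Inequality $l-1$ minus inequality $l$ equals $b_{m-l}-a_l$ with $1\le l\le m-1$, so $a_m$ never enters the adjacent-difference argument.
\item Deleting a vanishing pair $(a_k,b_{m-k})$ makes inequalities $k-1$ and $k$ coincide. This returns exactly the system for $n-2$, with the unpaired variable still present in every inequality.
\item In the counting step, at most $m$ of the $2m-1$ needed tight constraints are non-negativity constraints: one per pair, plus possibly $a_m=0$. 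Hence at least $m-1$ of the $m$ $\gamma$-inequalities are tight. That is fewer than your ``half,'' but it still forces an adjacent tight pair once $m\ge 3$.
\end{itemize}
So the induction runs $n-2\to n$ with base cases $n=3$ and $n=5$ checked directly, exactly as in the paper's proof.
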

{\bf\noindent Proof of Proposition \ref{nefco}.} By Lemma \ref{i1o},  $\mathrm{Nef}(\mathcal {TO}_{n})$ consists of divisors 
$D:=hH+\sum\nolimits_{i=1}^{[\frac{n-1}{2}]}a_i\Delta_i^-+\sum\nolimits_{j=1}^{[\frac{n}{2}]-1}b_j\Delta_j^+$
satisfying the conditions
\begin{equation}\label{subjo}
\left\{\begin{aligned}
&h-\sum\nolimits_{i=l+1}^{[\frac{n-1}{2}]}a_i-\sum\nolimits_{j=[\frac{n}{2}]-l}^{[\frac{n}{2}]-1}b_j\geq0,\,\,\,\,\,\,0\leq l\leq [\frac{n}{2}]-1\\
&a_1,\cdots,a_{[\frac{n-1}{2}]},b_1,\cdots,b_{[\frac{n}{2}]-1}\geq 0
\end{aligned}\right..\end{equation}

When $n$ is even, the proof is exactly the same as that of Proposition \ref{nefcl}. We modify the induction argument for the case $n$ odd as follows. When $n=3$, the cone defined by (\ref{subjo}) is generated by $(h,a_1)=(1,1),(1,0)$. When $n=5$, the cone  is generated by $(h,a_1,a_2,b_1)=(1,1,0,1),(1,1,0,0),(1,0,1,0),(1,0,0,1),(1,0,0,0)$. Assume that Proposition \ref{nefcl} holds for $n=2m-1\geq5$, and we proceed to prove it for $n=2m+1$.

Take a primitive generator of $\mathrm{Nef}(\mathcal {TO}_{2m+1})$. If $a_k,b_{m-k}\geq 1$ for a certain $1\leq k\leq m-1$, then by (\ref{bsto}), $D-\operatorname{min}\{a_k,b_{m-k}\}\cdot B_k$ also satisfies (\ref{subjo}). If $a_k=b_{m-k}=0$ for a certain $1\leq k\leq m-1$, we delete $a_k,b_{m-k}$ and reorder the others so that (\ref{subjo}) takes the same form with $n$ replaced by $n-2$, which thus yields Proposition \ref{nefcl} by induction. 

We claim that there must be a certain $1\leq k\leq m-1$ such that $a_k=b_{m-k}=0$, or $a_k,b_{m-k}\geq 1$. Otherwise, among the supporting hyperplanes that cut out a given extremal ray, at least $m-1$ of them arise from the first $m$ inequalities in (\ref{subjo}). Since $m\geq 3$, there are two adjacent ones. If we subtract them, we should have $a_k=b_{m-k}=0$ for a certain $1\leq k\leq m-1$, which is a contradiction.
The proof is complete.\,\,\,\endpf
\medskip

{\bf\noindent Proof of Theorem \ref{fano}.} It follows from Lemmas \ref{excepo}, \ref{excep}, \ref{i1l}, \ref{i1o}.\,\,\,\endpf
\medskip

{\bf\noindent Proof of Theorem \ref{bignef}.}
The proof is similar to that of  \cite[Theorem 1.2]{FZ21}. Since $\mathcal {TL}_{n}$, $\mathcal {TO}_n$ are toroidal, Theorem \ref{bignef} holds when they are Fano (see \cite[Corollary 3.3.11]{Per}). 

To avoid repetition, we present the proof for $\mathcal {TO}_n$ only. Without loss of generality, we further assume that $n=2m+1$ is odd. We have the following exact sequence by \cite[Proposition 2.3.2]{BB}.
\begin{equation*}%\label{exa}
0\rightarrow\mathcal{S}_{\mathcal {TO}_{n}}\rightarrow \mathcal{T}_{\mathcal {TO}_{n}}\rightarrow\bigoplus\nolimits_{i=1}^{m}\mathcal O_{\mathcal {TO}_{n}}(D_i^-)|_{D_i^-}\bigoplus\nolimits_{i=1}^{m}\mathcal O_{\mathcal {TO}_{n}}(D_i^+)|_{D_i^+}\rightarrow 0,
\end{equation*}
where $\mathcal S_{\mathcal {TO}_{n}}\subset\mathcal T_{\mathcal {TO}_{n}}$ consists of vector fields tangent to the boundary. Notice that for $k>0$, $H^k(\mathcal {TO}_{n}, \mathcal S_{\mathcal {TO}_{n}})=0$ (see \cite[Theorem 3.2]{Per}), and $\mathcal O_{\mathcal {TO}_{n}}(D_i^{\pm})_{D_i^{\pm}}\cong-K_{\mathcal {TO}_{n}}|_{D^{\pm}_i}+K_{D^{\pm}_i}$. 

{\bf Claim.} The restriction $-K_{\mathcal {TO}_{n}}|_{D^{\pm}_i}$ is nef and big.

{\bf Proof of Claim.} 
It suffices to show that $-K_{\mathcal {TO}_{n}}|_{D^{\pm}_i}$ is in the interior of $\operatorname{Eff}(\mathcal{TO}_{n})$. Without loss of generality, we consider the restriction of (\ref{kan}) to $D^-_j$ with $1\leq j\leq [\frac{n}{2}]$ and $n\geq 3$. 

Denote by $B^{-j}_{m}$, $0\leq m\leq [\frac{n}{2}]$, the restriction of the line bundle $B_m$ to $D^{-}_j$, and by $D^{-j}_{\pm i}$, $1\leq i\leq [\frac{n}{2}]$, the restriction of $D^{\pm}_i$ to $D^-_j$. %Note that $D^{-j}_{-i}$ can be identified with the intersection $D^{-}_j\cap D^{-}_i$, when $i\neq j$; $D^{-j}_{+i}$ can be identified with $D^{-}_j\cap D^{+}_i$, which is empty when $1\leq i\leq [\frac{n}{2}]+1-j$. %Similar results hold for $D^-_j$. 
It is easy to verify that $D^{-}_j$ is a spherical variety with boundary divisors $\{D^{-j}_{-i}\}_{1\leq i\leq [\frac{n}{2}], i\neq j}\cup\{D^{-j}_{+i}\}_{[\frac{n}{2}]+2-j\leq i\leq [\frac{n}{2}]}$ and colors $\{B^{-j}_m\}_{1\leq m\leq [\frac{n}{2}]-1}$. Moreover, its effective cone is generated by $\{D^{-j}_{-i}\}_{1\leq i\leq [\frac{n}{2}], i\neq j}\cup\{D^{-j}_{+i}\}_{[\frac{n}{2}]+2-j\leq i\leq [\frac{n}{2}]}\cup\{B^{-j}_{j-1}\}$, where $B^{-1}_0 = \emptyset$ if $j = 1$.  %we have that\begin{equation*} D^{-j}_{-j}\sim\left\{ \begin{array}{ll} B^{-j}_0-B^{-j}_1\,\,\,\,&   {\rm when}\,\,j=1\\  2B^{-j}_{1}-B^{-j}_{2}-B^{-j}_{0}\,\,\,\,&  {\rm when}\,\,j=2\\  
% 2B^{-j}_{j-1}-B^{-j}_{j}-B^{-j}_{j-2}-D^{-j}_{+(n+2-j)}\,\,\,\,&  {\rm when}\,\,3\leq j\leq n-1\\ 2B^{-j}_{j-1}-B^{-j}_{j-2}\,\,\,\,& {\rm when}\,\,   j=n\\  \end{array}\right..\end{equation*}

When $3\leq j\leq [\frac{n}{2}]-1$, we have that
\begin{equation*}
\sum\nolimits_{1\leq i\leq j}D^{-j}_{-i}=B^{-j}_{j-1}-B^{-j}_j\,\,\,\,\,\,\,\,{\rm and}\,\,\,\,\,\,\sum\nolimits_{1\leq i\leq j-1}D^{-j}_{-i}=B^{-j}_{j-2}-B^{-j}_{j-1}+D^{-j}_{+([\frac{n}{2}]+2-j)}.
\end{equation*}
Setting $0<t<1$, we get
\begin{equation*}
\begin{split}
-K_{\mathcal {TO}_{n}}|_{D^{-}_j}&= t\cdot\sum\nolimits_{1\leq i\leq j-1}D^{-j}_{-i}+
\sum\nolimits_{j+1\leq i\leq [\frac{n}{2}]} D^{-j}_{-i}+\sum\nolimits_{[\frac{n}{2}]+3-j\leq i\leq [\frac{n}{2}]}D^{-j}_{+i}+(1-t)\cdot D^{-j}_{+([\frac{n}{2}]+2-j)}\\
&\,\,\,\,\,\,\,\,\,+(2+t)\cdot B_{j-1}^{-j}+(1-t)\cdot B_{j-2}^{-j}+\sum\nolimits_{1\leq m\leq [\frac{n}{2}]-1,\,m\neq j-2,j-1,j}B_{m}^{-j}+3B^{-j}_{[\frac{n}{2}]}.
\end{split} 
\end{equation*}

The same argument works for $j=1,2,n$. The proof is complete. \,\,\,\endpf

Now by the Kawamata-Viehweg vanishing theorem,  $H^k(\mathcal {TO}_{n}, T_{\mathcal {TO}_{n}})=0$ for $k>0$. The similar argument works for $\mathcal {TO}_n$. We complete the proof of Theorem \ref{bignef}.
\,\,\,\,$\endpf$

\section*{Acknowledgment}

H. Fang and X. Wu are supported by National Key R\&D Program of China (No.~2022YFA1006700) and NSFC grant (No.~12201012). Alex Massarenti was supported by the PRIN 2022 grant (project 20223B5S8L) \emph{Birational geometry of moduli spaces and special varieties}, is a member of GNSAGA (INdAM), and thanks Peking University for the hospitality during a visiting period in which the collaboration leading to this article began.

\bibliographystyle{plain} 
\bibliography{ref_260228}

\end{document}